\setlist[itemize]{wide=0pt, leftmargin=0pt}
\theoremstyle{plain}
\newtheorem{lem}{Lemma}[section]
\newtheorem{thm}[lem]{Theorem}
\newtheorem{prop}[lem]{Proposition}
\newtheorem{cor}[lem]{Corollary}
\theoremstyle{definition}
\newtheorem{defn}[lem]{Definition}
\newtheorem{rem}[lem]{Remark}
\numberwithin{equation}{section}
\newcommand{\N}{\mathbb{N}}
\newcommand{\R}{\mathbb{R}}
\newcommand{\cala}{\mathcal A}
\newcommand{\calm}{\mathcal M}
\newcommand{\cals}{\mathcal S}
\newcommand{\calt}{\mathcal T}
\newcommand{\norm}[1]{\left\Vert#1\right\Vert}
\newcommand{\eps}{\varepsilon}
\newcommand{\dist}{\operatorname{dist}}
\renewcommand{\geq}{\geqslant}
\renewcommand{\leq}{\leqslant}
\newenvironment{nouppercase}{%
	\renewcommand{\uppercasenonmath}[1]{}}{}
\begin{document}
\title{$C^1$ invariant, stable and inertial manifolds  for non-autonomous dynamical systems}

\author[R. Czaja]{Rados{\l}aw Czaja$^{1}$}

\address{$^1$Institute of Mathematics, University of Silesia in Katowice, Bankowa 14, 40-007 Katowice, Poland.
E-mail address: \textup{radoslaw.czaja@us.edu.pl}}

\author[P. Kalita]{Piotr Kalita$^{2,*}$}

\address{$^2$Faculty of Mathematics and Computer Science, Jagiellonian University, {\L}ojasiewicza 6, 30-348 Krak\'ow,
Poland. E-mail address: \textup{piotr.kalita@uj.edu.pl}}

\thanks{$^*$Corresponding author.}

\thanks{The work of Piotr Kalita was supported by FAPESP, Brazil grant 2020/14075-6 and Spanish Ministerio de Ciencia, Innovación y Universidades, Agencia Estatal de Investigación (AEI) and FEDER grant PID2024-156228NB-I00.}

\author[A. Oliveira-Sousa]{Alexandre N. Oliveira-Sousa$^{3}$}

\address{$^{3}$Departamento de  Ma\-te\-m\'{a}\-ti\-ca\\ Universidade Federal de Santa Catarina \\	Caixa Postal 88040-900, Florian\'opolis SC, Brazil. E-mail address:
\textup{alexandre.n.o.sousa@ufsc.br}}

\subjclass[2020]{Primary 37L25, 35B42, 37D10. Secondary 37L05, 37L30.}
\keywords{Invariant manifold, inertial manifold, gap condition, exponential splitting, exponential dichotomy}


\begin{abstract}

We use the version of the Lyapunov--Perron method operating on individual solutions to investigate the existence of invariant manifolds for non-autonomous dynamical systems, focusing in particular on inertial and stable manifolds. We establish a characterization of both types of manifolds in terms of solutions exhibiting a common growth behavior, analogous to the classical characterization involving hyperbolicity. Furthermore, we introduce a unified formulation of the gap condition, from which known sharp versions are derived. Finally, we show that the constructed inertial manifolds have $C^1$ regularity.

\end{abstract}
\begin{nouppercase}
\maketitle
\end{nouppercase}

\section{Introduction}
We consider the non-autonomous abstract parabolic problem governed by the equation
$$
u_t=A(t)u+f(t,u),
$$
defined on a possibly infinite-dimensional Banach space $X$, where $\{A(t)\colon t\in \R\}$ is a time dependent family of linear operators, and $f(t,\cdot)$ are Lipschitz functions. The equation defines a nonlinear process $\{T(t,\tau)\colon t\geq \tau\}$ of mappings $T(t,\tau)\colon X\to X$ which, for the initial value taken at time $\tau$, return the solution of the problem at time $t$. 

An \textit{invariant manifold} for this system is the time dependent family of sets 
$$
\calm(t) = \{ \phi + \Sigma(t,\phi)\colon \phi\in Q(t)X\},
$$
where the functions $\Sigma(t,\cdot)$ are Lipschitz with common Lipschitz constant, $\{Q(t)\colon t\in\R\}$ is a~family of projections related to operators $A(t)$, and we have the invariance relation $T(t,\tau)\calm(\tau)=\calm(t)$ for $t\geq \tau$. If this manifold is, in addition, exponentially attracting, it is called an \textit{inertial manifold}, see Definition~\ref{defn:INERTIAL}. 
Once we have constructed an invariant manifold for a given problem we can reduce its dimensionality  by restricting the dynamics to  the manifold, the resulting system being invertible, and, if this manifold is inertial, all solutions starting outside of it are attracted towards it. 

There are several methods used to construct inertial manifolds: two most widely used are the Hadamard's graph transformation method  and the Lyapunov--Perron method. The first one, more geometrical in nature, was applied to construct the inertial manifolds by Mallet-Paret and Sell \cite{MPS}, and later was extended to the non-autonomous situation by Koksch and Siegmund \cite{KS02}. The Lyapunov--Perron method, on the other hand, was used by D. Henry \cite{Henry} to construct stable, unstable and center manifolds for parabolic problems, and, in the work of Foias, Sell, and Temam \cite{FST}, it was adapted for construction of inertial manifolds. We note that in \cite{FST} the concept of inertial manifolds was introduced, and their existence was proved for the first time. Since their introduction, the theory of invariant and inertial manifolds has greatly developed: a very good overview of known results, as of 2014, can be found in \cite{Zelik}. 

Existence of invariant manifolds for a given problem requires a condition known as \textit{spectral gap} which bounds from above the Lipschitz constant of $f(t,\cdot)$ by a quantity dependent on the difference of two exponents coming from the \textit{exponential splitting} (Definition~\ref{def:splitting}) of the linear evolution process related to the linear equation $u_t=A(t)u$. The natural question is to derive the sharp, or optimal spectral gap condition. This was done by Miklav\v{c}i\v{c} \cite{Miklavcic} and Romanov \cite{Romanov}, who, in autonomous framework, constructed inertial manifolds using the sharp gap condition and gave examples from which it follows that the condition cannot be further improved. 

In non-autonomous framework \cite{KS02} used the Hadamard's graph transform approach construct the inertial manifold which becomes the time-parameterized family of Lipschitz graphs. Recently, in the work \cite{CLMO-S25}, which serves as a direct inspiration for the present article, the authors employed the Lyapunov-–Perron method to construct non-autonomous Lipschitz invariant and inertial manifolds. A key idea in their approach is that exponential splitting for the linear process is preserved under nonlinear perturbations, an idea we also further develop here. Furthermore, by introducing the notion of a stable manifold associated with an invariant manifold, \cite{CLMO-S25} establishes a link between the saddle-point property and invariant manifolds within the non-autonomous framework.

In the present work, we further extend the invariant manifold theory in non-autonomous framework. In particular, we refine the gap condition from \cite{CLMO-S25} used in the construction of Lipschitz invariant, inertial, and stable manifolds, and also obtain improved associated Lipschitz constants.
In addition, we establish new characterizations of these manifolds in terms of spaces of solutions that exhibit controlled growth, where the growth rate explicitly depends on the gap condition. This approach draws parallels with classical results concerning exponential dichotomies and the saddle-point property, see, for example \cite{CCLO-S-AA-21,Chen-Tan-Hale,Zhou-Lu-Zhang-13} where such characterizations are established. This further supports the central message of \cite{CLMO-S25}, emphasizing the strong interrelation among all these concepts.

In \cite{CLMO-S25}, the gap condition is formulated in terms of the rate between the spectral gap and the size of the nonlinearity (denoted by $(\gamma - \rho)/\ell$ in \cite{CLMO-S25}). In the present work, we propose a new and  more general framework for interpreting the gap condition; see Definition~\ref{def:gap}. Building upon ideas from \cite{CR96,LL99}, we refine the condition in \cite{CLMO-S25}, obtaining a version that, when expressed in suitable norms, coincides with the sharp condition established in \cite{Miklavcic,Romanov}; see Definition~\ref{def:gap} and Remark~\ref{rem:SHARPNESS}.

Following \cite{CR96,LL99}, instead of considering the Lyapunov--Perron fixed point operator which transforms Lipschitz graphs to Lipschitz graphs, we study a version of this operator that acts on continuous functions of time having appropriate growth, at $-\infty$ for the invariant manifold and at $+\infty$ for the stable manifold. This differs from the approach of \cite{CLMO-S25} as it operates on individual fibers rather than on the whole graphs. Although this method does not yield automatically the Lipschitzness of the constructed manifold (we obtain this property using the \textit{cone condition} and later improve the Lipschitz constant in an iterative process), the advantage of this approach is the possibility of obtaining the sharp gap. We note that in the non-autonomous case the same gap as ours has been obtained using the Hadamard's method in \cite{KS02}, and, moreover, \cite[Section 6]{LL99} discusses the extension of the method to the case of cocycles which entails the non-autonomous framework. Still, our exposition fits into the unified framework of \cite{CLMO-S25} which allows for the extension to study the stable manifold of an invariant manifold which in turn leads to various applications in autonomous and non-autonomous frameworks such as, for example, the study of robustness of exponential dichotomies. The study of these topics under the assumption of our gap condition from Definition~\ref{def:gap} as a continuation of this article. Here, in addition, we prove that under the same gap condition the constructed invariant and inertial manifolds are $C^1$ smooth. 
This smoothness of the manifold is important in  further applications as it allows us to construct the tangent space to the manifold at every point, and moreover it implies that the reduced
vector field on the inertial manifold also inherits its smoothness. 

As a possible extension of this work we remark the problem of computation of non-autonomous inertial manifolds. A way to achieve this is using the ideas of \cite{JRT} where an algorithm for an autonomous case have been developed. 
 
The structure of the article is as follows: Section \ref{sec:2} contains  the setup of the studied problem and presents and discusses the assumptions needed for the invariant manifold existence. The main result, Theorem~\ref{thm:MAIN1}, on the existence of invariant manifolds is contained and proved in Section~\ref{sec:LIP}. Finally, Sections \ref{sec:diff} and \ref{sec:c1} contain proofs that the constructed manifolds are differentiable and $C^1$ (see Theorems~\ref{thm:MAIN2} and \ref{thm:MAIN3}), respectively. 

\section{Problem setup and assumptions}\label{sec:2}

\subsection{Linear evolution process and its exponential splitting.} Let $(X,\norm{\cdot})$ be a Banach space. We denote by $\mathcal{L}(X)$ the space of linear and bounded operators from $X$ to itself. For a possibly unbounded linear operator $A$ in $X$ we denote by $D(A)$ its domain. Moreover, let $J=\{ (s,t)\in \R^2\colon s\geq t\}$.

\begin{defn}
A family of linear and bounded operators $\{L(t,\tau)\colon \ (t,\tau)\in J\}\subset\mathcal{L}(X)$ is called a \textit{linear evolution process} if
	\begin{itemize}
		\item[(i)] $L(t,\tau) = L(t,s)L(s,\tau)$ for every $t\geq s\geq \tau$;
		\item[(ii)] $L(t,t) = I$ for every $t\in\R$;
		\item[(iii)] the function $J\ni (t,\tau)\mapsto L(t,\tau)\eta\in X$ is continuous for each $\eta\in X$.
	\end{itemize} 
\end{defn}
  We consider a family of (possibly unbounded) linear operators $A(t)\colon X\supset D(A(t))\to X$ defined for $t\in\R$. 
We assume that the family $\{A(t)\colon t\in\R\}$ generates a linear evolution process $\{L(t,\tau)\colon \ (t,\tau)\in J\}$ such that for each $(\tau,\eta)\in\R\times X$ the function $u(t,\tau,\eta)=L(t,\tau)\eta$ for $t\geq\tau$ is a mild solution of the abstract Cauchy problem   
\begin{equation}\label{e:LACP}
\begin{cases}
u_t=A(t)u\ \ \textrm{for}\ \  t>\tau,\\
u(\tau)=\eta\in X.
\end{cases}
\end{equation}
\begin{defn}\label{def:splitting}
	We say that the linear evolution process $\{L(t,\tau)\colon \ (t,\tau)\in J\}$ has exponential splitting with a bound $M\geq 1$,  exponents $\gamma,\rho\in\R$ such that $\gamma>\rho$ and a family of projections $\{Q(t)\colon t\in\R\}\subset\mathcal{L}(X)$ if
	\begin{itemize}
		\item[(i)] for every $(t,\tau)\in J$ we have \begin{equation*}
			Q(t)L(t,\tau)=L(t,\tau)Q(\tau);
		\end{equation*}
		\item[(ii)] $L(t,\tau)|_{Q(\tau)X}\colon Q(\tau)X\to Q(t)X$  is an isomorphism with the inverse denoted by $$L(t,\tau)^{-1} = L(\tau,t)\colon Q(t)X\to Q(\tau)X\ \ \text{for every}\ \ (t,\tau)\in J;$$
		\item[(iii)] the following  estimates hold
		\begin{align}\label{e:QTST}
			& e^{\gamma(t-\tau)}\norm{L(t,\tau)(I-Q(\tau))}\leq M\ \ \text{for}\ \  (t,\tau)\in J,\\
		\label{e:QTUNST}
		&	e^{\rho(t-\tau)}\norm{L(t,\tau)Q(\tau)}\leq M\ \ \text{for}\ \  (\tau,t) \in J.
	\end{align}		 
	\end{itemize}
\end{defn}

If $\gamma>0>\rho$, that is, $-\rho>0>-\gamma$, then $Q(t)$ is a projection of the unstable
subspace of $X$ at time $t$, and the complementary projection
 $I-Q(t)$ is the projection on the stable subspace of $X$ at time $t$. In the special case $\rho = -\gamma$ we say that the linear evolution process has \textit{exponential dichotomy} with constants $M\geq 1$ and $\gamma>0$. 
 
 \subsection{Renorming of the space \texorpdfstring{$X$}.} For every $t\in \R$ the space $X$ is a sum of two linear subspaces $Q(t)X$ and $(I-Q(t))X$. We introduce the following families of time dependent norms on these subspaces.
\begin{defn}
We define the following nonnegative valued functions: 
\begin{align}&\label{e:NORMNT}
|x|_{N(\tau)}=\sup_{t\leq\tau}e^{\rho(t-\tau)}\norm{L(t,\tau)x}\ \ \text{for}\ \  x\in N(\tau)=Q(\tau)X,
\\ &\label{e:NORMST}
|x|_{S(\tau)}=\sup_{t\geq\tau} e^{\gamma(t-\tau)}\norm{L(t,\tau)x}\ \ \textrm{for}\ \ x\in S(\tau)=(I-Q(\tau))X.
\end{align}
\end{defn}
The next result is a direct consequence of the above definition, see \cite[Lemma 6.8]{CLR}. 
\begin{lem}\label{lem:norms}
If the process  $\{L(t,\tau)\colon (t,\tau)\in J\}$ has exponential splitting, then
the functions $|\cdot|_{N(\tau)}$ and $|\cdot|_{S(\tau)}$ are norms in $N(\tau)$ and $S(\tau)$, respectively, which are equivalent to $\norm{\cdot}$ on each of the spaces. Moreover, we have
\begin{equation}\label{e:EQUIVNS}
\norm{x}\leq|x|_{N(\tau)}\leq M\norm{x}\ \text{for}\ x\in N(\tau)\quad\text{and}\quad  \norm{x}\leq|x|_{S(\tau)}\leq M\norm{x}\ \text{for}\ x\in S(\tau)
\end{equation}
and
\begin{equation}\label{e:QI-QONX}
|Q(\tau)x|_{N(\tau)}\leq M\norm{x}\ \text{and}\  |(I-Q(\tau))x|_{S(\tau)}\leq M\norm{x}\ \text{for}\  x\in X.
\end{equation}
Furthermore, we have
$$e^{\rho t}|L(t,\tau)x|_{N(t)}\leq e^{\rho\tau}|x|_{N(\tau)}\ \text{for}\  t\leq\tau \ \text{and}\  x\in N(\tau),$$
$$e^{\gamma t}|L(t,\tau)x|_{S(t)}\leq e^{\gamma\tau}|x|_{S(\tau)}\ \text{for}\  t\geq\tau\ \text{and}\  x\in S(\tau).$$
\end{lem}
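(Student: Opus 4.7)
My first step is to establish the two-sided estimates \eqref{e:EQUIVNS}, since everything else will follow from them together with the bookkeeping of suprema. For $x\in N(\tau)=Q(\tau)X$ and $t\le\tau$, the identity $x=Q(\tau)x$ lets me write $L(t,\tau)x=L(t,\tau)Q(\tau)x$ (interpreting $L(t,\tau)$ as the inverse of $L(\tau,t)|_{Q(t)X}$, which is well-defined by Definition~\ref{def:splitting}(ii)). Then estimate \eqref{e:QTUNST} rewritten as $\|L(t,\tau)Q(\tau)\|\le Me^{-\rho(t-\tau)}$ gives $e^{\rho(t-\tau)}\|L(t,\tau)x\|\le M\|x\|$, which is uniform in $t\le\tau$ and hence yields $|x|_{N(\tau)}\le M\|x\|$. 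The lower bound $\|x\|\le|x|_{N(\tau)}$ is obtained by evaluating the supremum at $t=\tau$, using $L(\tau,\tau)=I$. The estimates for $|\cdot|_{S(\tau)}$ are symmetric, using \eqref{e:QTST} and the supremum at $t=\tau$.

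Once the equivalence \eqref{e:EQUIVNS} is in hand, the norm axioms on $N(\tau)$ and $S(\tau)$ are immediate: absolute homogeneity and subadditivity follow from the linearity of $L(t,\tau)$ and the properties of the supremum, while positive definiteness follows because $|x|_{N(\tau)}\ge\|x\|$ on $N(\tau)$ (and similarly on $S(\tau)$). Finiteness of the sup on all of $N(\tau)$ (resp.\ $S(\tau)$) is precisely the upper bound established in the first step. The mixed estimates \eqref{e:QI-QONX} are proved by the same computation as the upper bound, but this time applied to an arbitrary $x\in X$: for $Q(\tau)x$ we have $|Q(\tau)x|_{N(\tau)}=\sup_{t\le\tau}e^{\rho(t-\tau)}\|L(t,\tau)Q(\tau)x\|\le M\|x\|$ directly from \eqref{e:QTUNST}, and analogously for $(I-Q(\tau))x$ via \eqref{e:QTST}.

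For the two monotonicity properties at the end, the key idea is to absorb the exponential prefactors into the supremum. For $t\le\tau$ and $x\in N(\tau)$, applying the cocycle property $L(s,t)L(t,\tau)=L(s,\tau)$ (valid on $N$ because of Definition~\ref{def:splitting}(ii)) gives
\begin{equation*}
e^{\rho t}|L(t,\tau)x|_{N(t)}=\sup_{s\le t}e^{\rho s}\|L(s,\tau)x\|\le \sup_{s\le \tau}e^{\rho s}\|L(s,\tau)x\|=e^{\rho \tau}|x|_{N(\tau)},
\end{equation*}
where the inequality is merely the fact that we have enlarged the index set of the supremum. The corresponding inequality for $|\cdot|_{S(\tau)}$ follows in exactly the same way, enlarging $\{s\ge t\}$ to $\{s\ge\tau\}$ for $t\ge\tau$.

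The only step that requires a small amount of care is verifying that the backward operator $L(t,\tau)|_{Q(\tau)X}$ for $t\le\tau$ is controlled by \eqref{e:QTUNST}, since this estimate is stated for the pair $(\tau,t)\in J$ and must be applied to the inverse; but this is immediate from Definition~\ref{def:splitting}(ii), so no genuine obstacle arises and the lemma reduces to routine bookkeeping of suprema.
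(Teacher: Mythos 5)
Your proof is correct and is exactly the standard direct-from-definitions argument: the paper itself offers no proof of Lemma~\ref{lem:norms}, merely citing \cite[Lemma 6.8]{CLR}, and your computation (upper bounds from \eqref{e:QTST}--\eqref{e:QTUNST}, lower bounds by evaluating at $t=\tau$, and the final monotonicity estimates via the cocycle property and enlargement of the index set of the supremum) is the expected route. The one point needing care, namely that the backward operators $L(t,\tau)$ on $Q(\tau)X$ inherit the cocycle identity and the bound \eqref{e:QTUNST} through Definition~\ref{def:splitting}(ii), is handled correctly.
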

The last two inequalities mean that after renorming, the estimates \eqref{e:QTST} and \eqref{e:QTUNST} hold with $M=1$. Thus, the choice of the norms in a way adjusted to the operator of the problem allows us to avoid the constant $M$ in the bounds of the exponential splitting. 

We choose a norm $\Gamma(\cdot,\cdot)$ on $\R^2$ that we will use to build the new norm on $X$ from the norms $|\cdot|_{N(\tau)}$ and $|\cdot|_{S(\tau)}$. This $\Gamma(\cdot,\cdot)$ is assumed to satisfy the following property. 

\begin{defn}\label{defn:ADMISSIBLENORM}
We say that a norm $\Gamma\colon\R^{2}\to[0,\infty)$ is admissible if for every $a,b\geq 0$ functions $\Gamma(a,\cdot)$ and $\Gamma(\cdot,b)$ are strictly monotone on $[0,\infty)$. 
\end{defn}

Among admissible norms on $\R^2$ we distinguish the norms  $\norm{(a_1,a_2)}_{p}=(|a_1|^p+|a_2|^p)^\frac{1}{p}$ with $p\geq 1$ or $\norm{(a_1,a_2)}_{\infty}=\max\{|a_1|,|a_2|\}$. Due to the equivalence of norms, we introduce a constant $c_\Gamma>0$ such that
\begin{equation*}
|a_1|+|a_2|=\norm{(a_1,a_2)}_{1}\leq c_\Gamma \Gamma(a_1,a_2)\ \text{for every}\ (a_1,a_2)\in\R^{2}.
\end{equation*}
We have $c_\Gamma=2^{1-\frac{1}{p}}$ if $\Gamma(a_1,a_2)=\norm{(a_1,a_2)}_{p}$, $p\geq 1$, and $c_\Gamma=2$ if $\Gamma(a_1,a_2)=\norm{(a_1,a_2)}_{\infty}$.

The next result follows in a straightforward way from Lemma \ref{lem:norms}.
\begin{lem}\label{lem:EQUIVNORMS}
Given an admissible norm on $\R^2$, for each $\tau\in\R$ the function
\begin{equation}\label{e:MOVINGNORM}
\norm{x}_\tau=\Gamma\left(|Q(\tau)x|_{N(\tau)},|(I-Q(\tau))x|_{S(\tau)}\right)\ \text{for}\ x\in X
\end{equation}
is an equivalent norm in $X$. We have 
$$\frac{1}{c_{\Gamma}}\norm{x}\leq\norm{x}_{\tau}\leq M\Gamma(1,1)\norm{x}\text{  for }x\in X\ \text{and}\ \tau\in\R.$$
\end{lem}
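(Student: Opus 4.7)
The plan is to verify the three norm axioms for $\norm{\cdot}_\tau$ and then establish the two-sided equivalence bounds, drawing directly on Lemma~\ref{lem:norms} (particularly \eqref{e:EQUIVNS} and \eqref{e:QI-QONX}) together with the admissibility of $\Gamma$. First, I would check positive definiteness: if $\norm{x}_\tau = 0$, then since $\Gamma$ is a norm both coordinates must vanish, and since $|\cdot|_{N(\tau)}$, $|\cdot|_{S(\tau)}$ are themselves norms by Lemma~\ref{lem:norms}, we obtain $Q(\tau)x = (I-Q(\tau))x = 0$, hence $x=0$. Absolute homogeneity follows because each of the inner norms is homogeneous and $\Gamma$ scales nonnegative arguments by $|\lambda|$.

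The triangle inequality is the only slightly non-trivial axiom. From the triangle inequality of $|\cdot|_{N(\tau)}$ and $|\cdot|_{S(\tau)}$, I obtain coordinatewise bounds
\begin{align*}
|Q(\tau)(x+y)|_{N(\tau)} &\leq |Q(\tau)x|_{N(\tau)} + |Q(\tau)y|_{N(\tau)},\\
|(I-Q(\tau))(x+y)|_{S(\tau)} &\leq |(I-Q(\tau))x|_{S(\tau)} + |(I-Q(\tau))y|_{S(\tau)}.
\end{align*}
Then I apply monotonicity of $\Gamma$ in each nonnegative argument (which is precisely Definition~\ref{defn:ADMISSIBLENORM}) to pass the inequality inside $\Gamma$, and finally the triangle inequality for $\Gamma$ splits the result into $\norm{x}_\tau + \norm{y}_\tau$. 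This is the step where admissibility is actually needed, so I would highlight it explicitly.

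For the equivalence with $\norm{\cdot}$, the upper bound uses \eqref{e:QI-QONX}: both $|Q(\tau)x|_{N(\tau)}$ and $|(I-Q(\tau))x|_{S(\tau)}$ are bounded by $M\norm{x}$, so by monotonicity and homogeneity of $\Gamma$,
$$
\norm{x}_\tau \leq \Gamma(M\norm{x}, M\norm{x}) = M\Gamma(1,1)\norm{x}.
$$
For the lower bound I write $x = Q(\tau)x + (I-Q(\tau))x$, bound $\norm{x}$ above by $\norm{Q(\tau)x} + \norm{(I-Q(\tau))x}$, and use \eqref{e:EQUIVNS} to replace each term by its $|\cdot|_{N(\tau)}$ or $|\cdot|_{S(\tau)}$ counterpart; finally the definition of $c_\Gamma$ gives
$$
\norm{x} \leq |Q(\tau)x|_{N(\tau)} + |(I-Q(\tau))x|_{S(\tau)} \leq c_\Gamma \,\norm{x}_\tau.
$$

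I don't expect any real obstacle: the whole argument is a clean assembly of facts already collected in Lemma~\ref{lem:norms} and Definition~\ref{defn:ADMISSIBLENORM}. The one subtle point is remembering that the admissibility hypothesis (strict monotonicity of $\Gamma$ in each argument on $[0,\infty)$) is what makes the coordinatewise triangle estimates transfer to a genuine triangle inequality for $\norm{\cdot}_\tau$, and is also what legitimizes the monotone comparison used in the upper bound.
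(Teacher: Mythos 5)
Your argument is correct and is exactly the routine verification the paper leaves implicit (it states the lemma ``follows in a straightforward way from Lemma~\ref{lem:norms}'' and gives no proof): norm axioms via the coordinatewise triangle inequalities plus the monotonicity of the admissible norm $\Gamma$, the upper bound via \eqref{e:QI-QONX}, and the lower bound via \eqref{e:EQUIVNS} and the definition of $c_\Gamma$. You also correctly pinpoint that admissibility is what lets the coordinatewise estimates pass inside $\Gamma$, which is the only place the hypothesis is used.
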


\subsection{Nonlinearity, nonlinear process, and the gap condition.} Let $f\colon\R\times X\to X$ be a~continuous function such that 
\begin{equation}\label{e:ZEROCONDITION}
f(t,0)=0\text{ for all }t\in\R.
\end{equation}
We need the nonlinearity $f$ to be Lipschitz with respect to its second argument. Specifically, it is convenient for us to make the Lipschitz assumption separately for $Q(t)$ projection of $f$ and for its complement. Namely, the standing assumption of this paper is that there exist $L_1,L_2>0$ such that for $t\in\R$ and $u,v\in X$ we have
\begin{align}\label{e:L1}
&|Q(t)(f(t,u)-f(t,v))|_{N(t)}\leq L_1\norm{u-v}_{t},\\
&\label{e:L2}
|(I-Q(t))(f(t,u)-f(t,v))|_{S(t)}\leq L_2\norm{u-v}_{t}.
\end{align}
Note that if  $f$ is uniformly globally Lipschitz in the second variable with the Lipschitz constant $\ell>0$, i.e.,
\begin{equation}\label{e:GLOBALLIP}
\norm{f(t,u)-f(t,v)}\leq\ell\norm{u-v},\ u,v\in X,\ t\in\R,
\end{equation} 
then assumptions \eqref{e:L1}, \eqref{e:L2} hold with $L_1=L_2=M\ell c_{\Gamma}$.

We consider an abstract Cauchy problem 
\begin{equation}\label{e:PERTURBED}
\begin{cases}
u_t=A(t)u+f(t,u)\ \ \textrm{for}\ \ \ t>\tau,\\
u(\tau)=\eta\in X,
\end{cases}
\end{equation}
which generates a nonlinear evolution process $\{T(t,\tau)\colon (t,\tau)\in J\}$ in $X$, that satisfies
\begin{equation}\label{e:VCF}
T(t,\tau)\eta=L(t,\tau)\eta+\int_{\tau}^{t}L(t,s)f(s,T(s,\tau)\eta)ds\ \ \text{for}\ \ t\geq\tau,\ \eta\in X.
\end{equation}
Thus $T(t,\tau)\colon X\to X$ satisfies $T(t,t)=I$, $T(t,s)T(s,\tau)=T(t,\tau)$ for $t\geq s\geq\tau$ and
the function $[\tau,\infty)\ni t\mapsto T(t,\tau)\eta$ is continuous for each $(\tau,\eta)\in\R\times X.$
Moreover, $T(t,\tau)0=0$ for $t\geq\tau$.

 Assuming the existence of an exponential splitting for the linear process $\{L(t,\tau)\colon t\geq \tau\}$, we show that this property is, in a certain sense, preserved under nonlinear perturbations. This behavior leads to the existence of an invariant manifold, which we now recall.
 
\begin{defn}\label{defn:INERTIAL}
	A family $\{\calm(t)\colon t\in\R\}\subset X$ is called an invariant manifold for the evolution process $\{T(t,\tau)\colon t\geq\tau\}$ if
	\begin{itemize}
		\item[(i)] $\{\calm(t)\colon t\in\R\}$ is invariant under the process, i.e., 
		$$T(t,\tau)\calm(\tau)=\calm(t)\text{ for all }t,\tau\in\R\text{ such that }t\geq\tau;$$
		\item[(ii)] $\{\mathcal{M}(t)\colon t\in \mathbb{R}\}$ is \textit{forward and pullback exponentially dominated}, i.e.,
		there is $\omega\in \mathbb{R}$ such that
given a bounded set $U\subset X$, there exist $t_*\geq 0$, $K>0$ such that 
$$\dist(T(t,\tau) U, \mathcal{M}(t))\leq Ke^{-\omega(t-\tau)},
$$ 
for all $t,\tau \in \R$ with  $t-\tau\geq t_*$;
		\item[(iii)] $\calm(t)$ is a Lipschitz graph for each $t\in\R$. 
	\end{itemize}
If additionally $\omega>0$, then $\{\mathcal{M}(t)\colon t\in \mathbb{R}\}\subset X$ is \textit{exponentially attracting} (pullback and forward) and it is called an {\emph{inertial manifold}.} 
\end{defn}

When studying invariant objects, complete trajectories that capture their behavior naturally arise. In \cite{CLR}, these special solutions are referred to as global solutions.

\begin{defn}
We say that $\xi\colon\calt\to X$ is a solution of the evolution process $\{T(t,\tau)\colon (t,\tau)\in J\}$ (or of \eqref{e:VCF}, for short) on an interval $\calt\subset\R$ if 
$T(t,\tau)\xi(\tau)=\xi(t)\ \text{for}\ t\geq \tau,\ t,\tau\in\calt.$
We call it a~global solution (or complete trajectory) if $\calt=\R$.
\end{defn}
Note that by the assumed continuity of the evolution process its solutions are continuous functions with values in $X$.

We assume the gap condition given by the following definition.
\begin{defn}\label{def:gap}
	We say that the constants $\gamma>\rho$, and $L_1,L_2>0$ satisfy the gap condition with the admissible norm $\Gamma(\cdot,\cdot)$ on $\R^2$ if there exists $\sigma\in (\rho,\gamma)$ such that 
		\begin{equation}\label{e:GAPGENERAL00}
	\Gamma\left(\frac{L_1}{\sigma-\rho},\frac{L_2}{\gamma-\sigma}\right)<1.
	\end{equation}
\end{defn}

\begin{cor}
Assume that
\begin{equation}\label{e:GAPGENERAL0}
\gamma-\rho > \Gamma(1,1)(L_1+L_2),
\end{equation}
then the gap condition given in Definition \ref{def:gap} is satisfied with any $\sigma\in(\rho+\Gamma(1,1)L_1,\gamma-\Gamma(1,1)L_2)$.
\end{cor}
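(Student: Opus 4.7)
The plan is to show directly that any $\sigma$ in the prescribed interval makes the two coordinates of $\Gamma$ in \eqref{e:GAPGENERAL00} strictly smaller than $1/\Gamma(1,1)$, and then to invoke strict monotonicity and homogeneity of the admissible norm $\Gamma$ to conclude.

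First I would observe that the hypothesis $\gamma-\rho>\Gamma(1,1)(L_1+L_2)$ is exactly the condition $\rho+\Gamma(1,1)L_1<\gamma-\Gamma(1,1)L_2$, so the interval $(\rho+\Gamma(1,1)L_1,\gamma-\Gamma(1,1)L_2)$ is non-empty and is contained in $(\rho,\gamma)$ (since $L_1,L_2>0$ and $\Gamma(1,1)>0$). Thus the candidate $\sigma$ is admissible for the gap condition in Definition~\ref{def:gap}.

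Next, for any such $\sigma$ I would use the two one-sided inequalities separately: from $\sigma-\rho>\Gamma(1,1)L_1$ it follows that $\frac{L_1}{\sigma-\rho}<\frac{1}{\Gamma(1,1)}$, and from $\gamma-\sigma>\Gamma(1,1)L_2$ it follows that $\frac{L_2}{\gamma-\sigma}<\frac{1}{\Gamma(1,1)}$. Applying the strict monotonicity of $\Gamma(\cdot,b)$ and $\Gamma(a,\cdot)$ on $[0,\infty)$ from Definition~\ref{defn:ADMISSIBLENORM} coordinate-wise, together with the homogeneity of the norm $\Gamma$, gives
\[
\Gamma\!\left(\frac{L_1}{\sigma-\rho},\frac{L_2}{\gamma-\sigma}\right)
<\Gamma\!\left(\frac{1}{\Gamma(1,1)},\frac{1}{\Gamma(1,1)}\right)
=\frac{1}{\Gamma(1,1)}\,\Gamma(1,1)=1,
\]
which is precisely \eqref{e:GAPGENERAL00}.

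There is essentially no obstacle here; the only subtlety worth flagging is that one must apply monotonicity in each coordinate successively (rather than a single "monotonicity in both arguments" step), which is exactly what admissibility of $\Gamma$ provides. The proof is thus a short two-line verification, and it also makes transparent why the sharper condition \eqref{e:GAPGENERAL00} is a strict refinement of the simpler sufficient condition \eqref{e:GAPGENERAL0}.
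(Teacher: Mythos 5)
Your proof is correct and follows essentially the same route as the paper: reduce the bound on $\sigma$ to the two inequalities $\frac{L_1}{\sigma-\rho}<\frac{1}{\Gamma(1,1)}$ and $\frac{L_2}{\gamma-\sigma}<\frac{1}{\Gamma(1,1)}$, then use monotonicity (and homogeneity) of $\Gamma$ to get $\Gamma\left(\frac{L_1}{\sigma-\rho},\frac{L_2}{\gamma-\sigma}\right)<1$. Your additional remarks (non-emptiness of the interval, its inclusion in $(\rho,\gamma)$, and the coordinate-wise application of admissibility) only make explicit what the paper leaves implicit.
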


\begin{proof}
We can choose $\sigma$ such that 
$$\gamma-L_2\Gamma(1,1)>\sigma>\rho+L_1\Gamma(1,1),$$
which is equivalent to the fact that $\frac{L_1}{\sigma-\rho}<\frac{1}{\Gamma(1,1)}$ and $\frac{L_2}{\gamma-\sigma}<\frac{1}{\Gamma(1,1)}$. By the monotonicity of $\Gamma$, we have
$$\Gamma\left(\frac{L_1}{\sigma-\rho},\frac{L_2}{\gamma-\sigma}\right)<1,$$
which ends the proof. 
\end{proof}

\begin{rem}\label{rem:SHARPNESS}
Note that if \eqref{e:GLOBALLIP} holds, and we have the condition 
\begin{equation*}
	\gamma-\rho>4M\ell,
\end{equation*}
then, regardless of the chosen norm $\Gamma(\cdot)=\norm{\cdot}_p$ with $1\leq p\leq \infty$, the gap condition in Definition~\ref{def:gap} holds with any $\sigma\in(\rho+2M\ell,\gamma-2M\ell)$. This improves the gap condition of \cite{CLMO-S25}. 

If we choose the norm $\|(a_1,a_2)\|_\infty$ in the role of $\Gamma$, then the inequality \eqref{e:GAPGENERAL00} holds
\emph{if and only if} $\sigma\in(\rho-L_1,\gamma+L_2)$. Thus, in this case, the gap condition in Definition~\ref{def:gap} is \emph{equivalent} to the optimal condition of \cite{LL99}, which is exactly~\eqref{e:GAPGENERAL0}, namely
$$\gamma-\rho>L_1+L_2.$$ 
If additionally $L_1 = L_2 =L$, this reduces to the sharp gap condition 
\begin{equation}\label{e:sharp_gap}
\gamma-\rho>2L.
\end{equation}
Sharpness of condition \eqref{e:sharp_gap} was first obtained in \cite{Miklavcic, Romanov}, see also \cite{CR96,LL99,Zelik}, and it can be seen in the example of \cite{CR96}, namely in the system
$$
\begin{pmatrix}
	x \\ y 
\end{pmatrix}' = \begin{pmatrix}
1 & 0 \\ 0 & -1 
\end{pmatrix} 
\begin{pmatrix}
	x \\ y 
\end{pmatrix} + \varepsilon\begin{pmatrix}
- y \\  x 
\end{pmatrix},
$$
where the term with $\varepsilon$ is treated as nonlinearity. Then $\gamma=1$, $\rho=-1$, and $L=L_1=L_2=|\varepsilon|$. The gap condition \eqref{e:sharp_gap} signifies that $|\varepsilon| < 1$. The constant $1$ in the right-hand side of the above formula cannot be increased, because if we take $\varepsilon$ greater than one, we do not have the invariant set which is a graph over the $x$ variable. 

Note that in this norm, the Lipschitz conditions \eqref{e:L1}--\eqref{e:L2} on the nonlinearity $f$ with $L_1=L_2=L$ can be written as 
$$\|f(\tau,u)-f(\tau,v)\|_\tau \leq L\|u-v\|_\tau\ \ \text{for}\ \ \tau\in \R.$$

In the case when the norm $\|(a_1,a_2)\|_p$ is taken in the role of $\Gamma$, the condition \eqref{e:GAPGENERAL00} is equivalent to say that
$$
\gamma-\rho > \left(L_1^{\frac{p}{p+1}}+L_2^{\frac{p}{p+1}}\right)^\frac{p+1}{p}.
$$
In particular, if $L_1=L_2=L$, this takes the (non-sharp) form
$$\gamma-\rho > 2^{1+\frac{1}{p}}L.$$
In particular, in the case of the norm $\|(a_1,a_2)\|_1$ we obtain the condition 
$\gamma-\rho>4L,$ and in the case of the norm $\|(a_1,a_2)\|_2$ we get $\gamma-\rho>2\sqrt{2}L.$ 
\end{rem}

\subsection{Gronwall lemmas.} We conclude this section by recalling two integral versions of the Gronwall lemma that will be both used in the sequel.
\begin{lem}[Gronwall]
Let $-\infty\leq a<b<\infty$ and let $y\colon(a,b]\to\R$ be a locally Lebesgue integrable function. If  a non-increasing function $\alpha\colon(a,b]\to\R$ and $\beta>0$ satisfy the inequality
$$y(t)\leq\alpha(t)+\beta\int_{t}^{b}y(s)ds,\ t\in(a,b],$$
then
\begin{equation}\label{e:GRONWALL1}
y(t)\leq\alpha(t)e^{\beta(b-t)},\ t\in(a,b].
\end{equation} 
\end{lem}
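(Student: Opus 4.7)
The plan is to convert the integral inequality into a differential one via the integrating-factor trick and then exploit the monotonicity of $\alpha$. I would set $Y(t)=\int_{t}^{b}y(s)\,ds$ for $t\in(a,b]$; this function is well defined and absolutely continuous on every compact subinterval of $(a,b]$ by the local integrability of $y$, with $Y'(t)=-y(t)$ for a.e.\ $t$ and the boundary value $Y(b)=0$. In these terms the hypothesis reads $-Y'(t)\leq \alpha(t)+\beta Y(t)$ almost everywhere.

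Multiplying this a.e.\ inequality by $e^{\beta t}$ and rearranging yields $\frac{d}{dt}\bigl(e^{\beta t}Y(t)\bigr)\geq -\alpha(t)e^{\beta t}$ a.e. Since $e^{\beta t}Y(t)$ is absolutely continuous, I can integrate from $t$ to $b$ and use $Y(b)=0$ to obtain
\begin{equation*}
-e^{\beta t}Y(t)\geq -\int_{t}^{b}\alpha(s)e^{\beta s}\,ds,\qquad\text{i.e.}\qquad Y(t)\leq \int_{t}^{b}\alpha(s)e^{\beta(s-t)}\,ds.
\end{equation*}
The non-increasing nature of $\alpha$ gives $\alpha(s)\leq \alpha(t)$ for $s\in[t,b]$, and since $e^{\beta(s-t)}>0$ this upgrades to the pointwise bound $\alpha(s)e^{\beta(s-t)}\leq \alpha(t)e^{\beta(s-t)}$. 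Integrating and evaluating the resulting elementary integral produces $\beta Y(t)\leq \alpha(t)\bigl(e^{\beta(b-t)}-1\bigr)$. Plugging this back into the original hypothesis delivers
\begin{equation*}
y(t)\leq \alpha(t)+\alpha(t)\bigl(e^{\beta(b-t)}-1\bigr)=\alpha(t)e^{\beta(b-t)},
\end{equation*}
which is exactly \eqref{e:GRONWALL1}.

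The main technical point to watch is that $Y$ is merely absolutely continuous rather than $C^1$, so the differential inequality holds only almost everywhere; this is harmless because integration of a.e.\ inequalities between absolutely continuous functions is legitimate via the fundamental theorem of calculus for Lebesgue integrals. A further subtlety worth flagging is that the argument does not require $\alpha\geq 0$: the comparison $\alpha(s)e^{\beta(s-t)}\leq \alpha(t)e^{\beta(s-t)}$ relies only on monotonicity of $\alpha$ and positivity of the exponential factor, so the conclusion is valid for signed $\alpha$ as stated. An iteration-based alternative, in which one substitutes the bound $\alpha(s)+\beta\int_s^b y(r)\,dr$ repeatedly into itself to reconstruct the Taylor expansion of $e^{\beta(b-t)}$, is possible but requires sign control on $y$ to preserve the inequality at each step; the integrating-factor route is preferable because it works unconditionally.
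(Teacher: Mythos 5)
Your proof is correct and takes essentially the same route as the paper's: the same integrating-factor argument applied to $Y(t)=\int_t^b y(s)\,ds$, integration from $t$ to $b$, use of the monotonicity of $\alpha$ to pull out $\alpha(t)$, and substitution of the resulting bound $\beta Y(t)\leq\alpha(t)\bigl(e^{\beta(b-t)}-1\bigr)$ back into the hypothesis. The only differences are cosmetic (the paper multiplies by $\beta e^{\beta(t-b)}$ instead of $e^{\beta t}$) together with your explicit, and welcome, attention to the absolute-continuity/a.e.\ details.
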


\begin{proof}
We multiply by $\beta e^{\beta(t-b)}$ and get
$$\beta y(t)e^{\beta(t-b)}\leq\alpha(t)\beta e^{\beta(t-b)}+\beta^2 e^{\beta(t-b)}\int_{t}^{b}y(s)ds,\ t\in(a,b],$$
so
$$\frac{d}{dt}\Bigl(-\beta e^{\beta(t-b)}\int_{t}^{b}y(s)ds \Bigr)\leq\alpha(t)\beta e^{\beta(t-b)}.$$
Integrating on the interval $[t,b]$ we obtain
$$\beta e^{\beta(t-b)}\int_{t}^{b}y(s)ds \leq \beta\int_{t}^{b}\alpha(s)e^{\beta(s-b)}ds\leq \beta\alpha(t)\int_{t}^{b}e^{\beta(s-b)}ds=\alpha(t)(1-e^{\beta(t-b)}),$$
which leads to \eqref{e:GRONWALL1}.
\end{proof}

\begin{lem}[Gronwall]\label{lem:GRONWALL2}
Let $-\infty\leq a<b<\infty$ and let $y\colon(a,b]\to\R$ and $\alpha\colon(a,b]\to\R$ be locally Lebesgue integrable functions. If for some $\beta>0$ they satisfy the inequality
$$y(t)\leq\int_{t}^{b}(\alpha(s)+\beta y(s))ds,\ t\in(a,b],$$
then
$$y(t)\leq\int_{t}^{b}\alpha(s)e^{\beta(s-t)}ds,\ t\in(a,b].$$
\end{lem}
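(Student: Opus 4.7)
The plan is to rewrite the hypothesis as a first-order linear differential inequality, solve it with an integrating factor, and then substitute the resulting bound back into the original hypothesis. First, I would introduce the auxiliary function $u(t)=\int_t^b y(s)\,ds$ for $t\in(a,b]$. Since $y$ is locally Lebesgue integrable, $u$ is absolutely continuous on every compact subinterval of $(a,b]$, satisfies $u'(t)=-y(t)$ for a.e.\ $t$, and $u(b)=0$. The hypothesis then becomes
$$-u'(t)\leq\int_t^b\alpha(s)\,ds+\beta u(t)\quad\text{for a.e.}\ t\in(a,b].$$

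Next, I would multiply by the integrating factor $e^{\beta t}$ and use the product rule to obtain
$$\bigl(e^{\beta t}u(t)\bigr)'\geq -e^{\beta t}\int_t^b\alpha(s)\,ds,$$
integrate on $[t,b]$ (using $u(b)=0$), and apply Fubini's theorem to exchange the order of integration in
$$\int_t^b e^{\beta r}\int_r^b\alpha(s)\,ds\,dr=\int_t^b\alpha(s)\,\frac{e^{\beta s}-e^{\beta t}}{\beta}\,ds.$$
This yields the intermediate bound
$$\beta u(t)\leq\int_t^b\alpha(s)\bigl(e^{\beta(s-t)}-1\bigr)\,ds.$$

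Finally, I would plug this back into the original hypothesis to obtain
$$y(t)\leq\int_t^b\alpha(s)\,ds+\beta u(t)\leq\int_t^b\alpha(s)e^{\beta(s-t)}\,ds,$$
which is the desired conclusion. The main technical point is the absolute continuity of $u$ together with the validity of the fundamental theorem of calculus, both of which follow immediately from the local integrability of $y$; no sign assumption on $\alpha$ is needed, and the case $a=-\infty$ poses no difficulty because all manipulations take place on the compact interval $[t,b]$ for a fixed $t\in(a,b]$. An equally valid alternative would be iteration: repeatedly substituting the hypothesis into itself and applying Fubini produces partial sums of the exponential series for $e^{\beta(s-t)}$, with the remainder $\frac{\beta^{n+1}}{n!}\int_t^b y(r)(r-t)^n\,dr$ vanishing as $n\to\infty$ by local integrability of $y$.
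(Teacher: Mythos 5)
Your argument is correct and is essentially the paper's own integrating-factor proof: the paper simply takes the auxiliary function $z(s)=\int_s^b(\alpha(r)+\beta y(r))\,dr$ instead of your $u(t)=\int_t^b y(s)\,ds$, which lets it conclude directly from $y\leq z$ without the Fubini exchange or the final re-substitution into the hypothesis. Both routes are valid; yours just carries those two extra (harmless) steps.
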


\begin{proof}
Defining
$$z(s)=\int_{s}^{b}(\alpha(r)+\beta y(r))dr,\ s\in(a,b],$$
we get
$$\frac{d}{ds}\bigl[-z(s)e^{\beta s}\bigr]=(-z'(s)-\beta z(s))e^{\beta s}\leq\alpha(s)e^{\beta s},\ s\in(a,b].$$
Thus
$$z(t)e^{\beta t}\leq\int_{t}^{b}\alpha(s)e^{\beta s}ds,\ s\in(a,b],$$
which yields the claim.
\end{proof}

\section{Results on existence of Lipschitz invariant and inertial manifolds}\label{sec:LIP}

\subsection{Main result.} We first formulate the main theorem of this paper, which will be proved in the following part of this section.

\begin{thm}\label{thm:MAIN1}
	Assume that the linear abstract Cauchy problem \eqref{e:LACP} generates a linear process $\{L(t,\tau)\colon t\geq\tau\}$ on a Banach space $X$, which has the exponential splitting (Definition \ref{def:splitting}) with projections $\{Q(\tau)\colon \tau\in\R\}$ and constants $\gamma,\rho\in\R$ and $M\geq 1$.
	
	Introducing the spaces $N(\tau)=Q(\tau)X$ and $S(\tau)=(I-Q(\tau))X$ for $\tau\in\R$, endowed with the norms $|\cdot|_{N(\tau)}|$ and $|\cdot|_{S(\tau)}$ given by \eqref{e:NORMNT} and \eqref{e:NORMST}, respectively,
	and equipping $X$ with the equivalent norms $\norm{\cdot}_{\tau}$ given in \eqref{e:MOVINGNORM} for a chosen admissible norm $\Gamma$ on $\R^2$ (Definition~\ref{defn:ADMISSIBLENORM}), we further assume that a continuous function $f\colon\R\times X\to X$ satisfies \eqref{e:ZEROCONDITION} and the uniform global Lipschitz conditions  
	\eqref{e:L1} and \eqref{e:L2} with constants $L_1,L_2>0$.
	
	Consider the perturbed abstract Cauchy problem \eqref{e:PERTURBED} and assume that it generates an evolution process $\{T(t,\tau)\colon t\geq\tau\}$ in $X$, which satisfies the variation of constants formula \eqref{e:VCF}, that is,
	\begin{equation*}
		T(t,\tau)\eta=L(t,\tau)\eta+\int_{\tau}^{t}L(t,s)f(s,T(s,\tau)\eta)ds\ \ \text{for}\ \  t\geq\tau\ \ \text{and}\ \ \eta\in X.
	\end{equation*}
	Finally, assume that the gap condition \eqref{e:GAPGENERAL00} given in Definition \ref{def:gap} holds, that is,
	$$\Gamma\left(\frac{L_1}{\sigma-\rho},\frac{L_2}{\gamma-\sigma}\right)<1\ \ \text{for some}\ \ \sigma\in (\rho,\gamma).$$ 
    
    Then there exists an invariant manifold $\{\mathcal{M}(t)\colon t\in \mathbb{R}\}$ for $\{T(t,\tau)\colon t\geq \tau\}$ given by 
\begin{equation}\label{eq-characterization-inertail}
\begin{split}
\mathcal{M}(t)=\Big\{\eta\in X\colon &\text{there exists a global solution}\ z\colon\R\to X\ \text{of}\ \eqref{e:VCF} \\
&\text{such that } z(t)=\eta, \ \sup_{r\leq t }\{e^{\sigma r}\|z(r)\|_r\}<+\infty\Big\},
\end{split} 
\end{equation}

satisfying the following properties:
\begin{enumerate}
    \item[(I)] $\{\mathcal{M}(t)\colon t\in \R\}$ is a Lipschitz graph:
    $$\mathcal{M}(t)=P_\Sigma(t)X:=\{Q(t)\eta+\Sigma(t,Q(t)\eta)\colon \eta\in X\},$$ 
    where {$P_\Sigma(t)\eta:=Q(t)\eta+\Sigma(t,\eta)$, $(t,\eta)\in\mathbb{R}\times X$,} for a function
$\Sigma\colon\mathbb{R}\times X \to X$, (defined later in \eqref{e:DEFNSIGMA}) such that
$$\Sigma(\tau,\eta)=\Sigma(\tau,Q(\tau)\eta)=(I-Q(\tau))\Sigma(\tau,\eta)\ \text{for}\ \tau\in\R\ \text{and}\ \eta\in X,$$
	 $\Sigma(\tau,0)=0$ for $\tau\in\R$, and
	\begin{equation*}
		|\Sigma(\tau,\eta)-\Sigma(\tau,\tilde\eta)|_{S(\tau)}\leq\kappa_\Sigma|Q(\tau)(\eta-\tilde\eta)|_{N(\tau)}\ \text{for}\  \eta,\tilde\eta\in X\ \text{and}\ \tau\in\R,
	\end{equation*}
	with the Lipschitz constant $0<\kappa_\Sigma<\kappa = \frac{L_2}{L_1}\frac{\sigma-\rho}{\gamma-\sigma}$ (see Corollary~\ref{cor:betterlip})
	 satisfying
\begin{equation*}
\gamma-\rho=L_1\Gamma(1,\kappa_\Sigma)+L_2\Gamma\left(\frac{1}{\kappa_\Sigma},1\right),
\end{equation*} 	
so that the value of $\kappa_\Sigma$ depends only on $\gamma, \rho, L_1, L_2$ and the chosen admissible norm $\Gamma$.
\item[(II)] $\{\mathcal{M}(t)\colon t\in \R\}$ satisfies the following refined controlled growth
{
    \begin{equation*}
    \|z(t)\|_t \leq \frac{\Gamma(1,\kappa_\Sigma)}{\Gamma(1,0)}e^{-(\rho+L_1 \Gamma(1,\kappa_\Sigma))(t-\tau)}\|\eta\|_\tau\  \text{for}\ t\leq\tau,
    \end{equation*}
    where $z\colon\R\to X$ is a unique global solution of \eqref{e:VCF} through $z(\tau)=\eta\in\mathcal{M}(\tau)$,}
    \item[(III)] $\{\mathcal{M}(t)\colon t\in \R\}$ satisfies the following  property
$$|T(t,\tau)\eta-P_\Sigma(t)T(t,\tau)\eta|_{S(t)}\leq |\eta-P_\Sigma(\tau)\eta|_{S(\tau)}e^{-\omega(t-\tau)}\ \text{for}\  t\geq\tau,\ \eta\in X,$$
with
\begin{equation*}
\omega=\gamma-\frac{(\gamma-\rho)L_2\Gamma(0,1)}{\gamma-\rho-L_1\Gamma(1,\kappa_\Sigma)}\in(\rho,\gamma),
\end{equation*}
      \item[(IV)] $\{\mathcal{M}(t)\colon t\in \R\}$  exponentially controls the evolution of any bounded set $G\subset X$ in the following sense: for each bounded subset $G\subset X$ there is $C_G>0$ such that 
\begin{equation*}
\dist(T(t,\tau)G,\mathcal{M}(t))\leq C_Ge^{-\omega(t-\tau)}\ \ \text{for}\  \ t\geq \tau,
\end{equation*}
where $\dist(A,B)=\sup_{a\in A}\inf_{b\in B}{\|a-b\|}$ is the Hausdorff semi-distance between $A,B\subset X$.
\end{enumerate}
If additionally $\omega>0$, then $\{\calm(t)\colon t\in\R\}$ is a forward and pullback exponentially attracting invariant manifold, i.e., an inertial manifold for the evolution process $\{T(t,\tau)\colon t\geq\tau\}$.

\end{thm}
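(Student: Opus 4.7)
The plan is to apply the Lyapunov--Perron method acting on individual solutions with prescribed exponential growth at $-\infty$, in the spirit of \cite{CR96,LL99}. Fix $\tau\in\R$ and a seed $\eta_0\in N(\tau)=Q(\tau)X$. In the Banach space
$$\calx_\tau=\Bigl\{z\in C((-\infty,\tau],X)\colon \norm{z}_\tau^{*}:=\sup_{r\leq\tau}e^{\sigma(r-\tau)}\norm{z(r)}_r<\infty\Bigr\},$$
I would define the Lyapunov--Perron operator
$$(\calf_{\tau,\eta_0} z)(t)=L(t,\tau)\eta_0+\int_\tau^t L(t,s)Q(s)f(s,z(s))\,ds+\int_{-\infty}^t L(t,s)(I-Q(s))f(s,z(s))\,ds,\ \ t\leq\tau.$$
Fixed points of $\calf_{\tau,\eta_0}$ are precisely past-global mild solutions of \eqref{e:VCF} with $Q(\tau)z(\tau)=\eta_0$ and growth rate $\sigma$ at $-\infty$; forward extension through $\{T(t,\tau)\}$ yields full global solutions that match the characterization \eqref{eq-characterization-inertail}.

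The core calculation is the contraction estimate. Splitting $(\calf z-\calf\tilde z)(t)$ into its $N(t)$- and $S(t)$-components, applying the renormed dichotomy bounds from Lemma~\ref{lem:norms} (so the constant $M$ disappears), the Lipschitz assumptions \eqref{e:L1}--\eqref{e:L2}, and the elementary integrals
$$\int_t^\tau e^{-\rho(t-s)}e^{-\sigma(s-\tau)}\,ds=\frac{e^{-\sigma(t-\tau)}-e^{-\rho(t-\tau)}}{\sigma-\rho},\ \ \int_{-\infty}^t e^{-\gamma(t-s)}e^{-\sigma(s-\tau)}\,ds=\frac{e^{-\sigma(t-\tau)}}{\gamma-\sigma},$$
followed by multiplication by $e^{\sigma(t-\tau)}$ and the monotonicity of the admissible norm $\Gamma$ (Definition~\ref{defn:ADMISSIBLENORM}), produces
$$e^{\sigma(t-\tau)}\norm{\calf z(t)-\calf\tilde z(t)}_t\leq\Gamma\!\left(\frac{L_1}{\sigma-\rho},\frac{L_2}{\gamma-\sigma}\right)\norm{z-\tilde z}_\tau^{*},$$
which is a strict contraction by Definition~\ref{def:gap}. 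The unique fixed point $z_{\tau,\eta_0}$ then defines $\Sigma(\tau,\eta_0):=(I-Q(\tau))z_{\tau,\eta_0}(\tau)$, extended to $\eta\in X$ by $\Sigma(\tau,\eta):=\Sigma(\tau,Q(\tau)\eta)$. Invariance $T(t,\tau)\calm(\tau)=\calm(t)$ follows from uniqueness of the Lyapunov--Perron solution together with the semigroup property of $\{T(t,\tau)\}$: if $z\in\calx_\tau$ passes through $\eta\in\calm(\tau)$, its restriction to $(-\infty,t]$ is the $\calx_t$-solution with seed $Q(t)T(t,\tau)\eta$. The controlled growth (II) is read off from the fixed-point equation via Lemma~\ref{lem:GRONWALL2}.

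The remaining delicate points are the sharp Lipschitz constant in (I) and the precise rate in (III). A direct comparison of two fixed points gives only a preliminary Lipschitz bound on $\Sigma(\tau,\cdot)$. To reach $\kappa_\Sigma$, I would iterate: from a cone inequality $|(I-Q(\tau))(z_1(\tau)-z_2(\tau))|_{S(\tau)}\leq\kappa|Q(\tau)(z_1(\tau)-z_2(\tau))|_{N(\tau)}$, the $N$- and $S$-components of the nonlinearity difference may be re-bounded by $L_1\Gamma(1,\kappa)$ and $L_2\Gamma(1/\kappa,1)$ respectively using the homogeneity and monotonicity of $\Gamma$, and rerunning the fixed-point argument in the restricted subspace yields an improved $\kappa'<\kappa$; the iteration converges to the unique positive solution of $\gamma-\rho=L_1\Gamma(1,\kappa_\Sigma)+L_2\Gamma(1/\kappa_\Sigma,1)$, which is the content of the forward-referenced Corollary~\ref{cor:betterlip}. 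For (III), setting $y(t):=T(t,\tau)\eta-P_\Sigma(t)T(t,\tau)\eta\in S(t)$ and writing an integral inequality from \eqref{e:VCF} applied both to $T(t,\tau)\eta$ and to the global trajectory through $P_\Sigma(\tau)\eta$, one controls the $N$-contribution through the Lipschitz bound $\kappa_\Sigma$ on $\Sigma$ and applies Lemma~\ref{lem:GRONWALL2} to extract the precise rate $\omega$; (IV) is then immediate by taking the supremum over a bounded $G$ and converting $\norm{\cdot}_t$ to $\norm{\cdot}$ via Lemma~\ref{lem:EQUIVNORMS}. The iterative sharpening to $\kappa_\Sigma$ is the principal technical obstacle, and producing the exact denominator $\gamma-\rho-L_1\Gamma(1,\kappa_\Sigma)$ in $\omega$ is a close second, since it requires careful bookkeeping of the $N$--$S$ coupling through $\Sigma$.
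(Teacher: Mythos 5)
Your skeleton matches the paper's: the weighted space $E_{\sigma,\tau}$ of backward solutions, the Lyapunov--Perron operator (your $\calf_{\tau,\eta_0}$ is exactly the paper's $H_{\tau,\eta}$, up to the sign convention on $\int_\tau^t$), the contraction factor $\Gamma\bigl(\tfrac{L_1}{\sigma-\rho},\tfrac{L_2}{\gamma-\sigma}\bigr)$, the definition $\Sigma(\tau,\eta)=(I-Q(\tau))z_{\tau,\eta}(\tau)$, the characterization \eqref{eq-characterization-inertail}, invariance by restriction/extension of trajectories, and Gronwall for the backward growth (II). The genuine gap is in (I): you start the sharpening iteration ``from a cone inequality'' $|v(\tau)|_{S(\tau)}\leq\kappa|u(\tau)|_{N(\tau)}$ with $\kappa=\tfrac{L_2}{L_1}\tfrac{\sigma-\rho}{\gamma-\sigma}$, but you never derive it, and it does not follow from the contraction argument. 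A direct comparison of two fixed points only gives a Lipschitz constant of the order $\tfrac{L_2\,\Gamma(1,0)}{(\gamma-\sigma)\bigl(1-\Gamma(\frac{L_1}{\sigma-\rho},\frac{L_2}{\gamma-\sigma})\bigr)}$, which blows up as the gap inequality becomes tight; fed into the improvement map $\kappa\mapsto\tfrac{L_2\Gamma(1,\kappa)}{\gamma-\rho-L_1\Gamma(1,\kappa)}$ such a constant may make the denominator nonpositive (the integral in the improvement step then diverges), and even when defined the map has a second, larger fixed point above which iteration increases rather than decreases, so the scheme does not reach $\kappa_\Sigma$ from an arbitrary starting constant. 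Obtaining the anchor $\kappa$ is precisely the hard part of the paper: the sign-dichotomy Lemmas~\ref{lem:POS} and \ref{lem:NEG} for $\zeta(t)=|v(t)|_{S(t)}-\kappa|u(t)|_{N(t)}$, Proposition~\ref{prop:zeta}, and Corollary~\ref{cor:ALWAYSNEGATIVE}, where the $E_{\sigma,\tau}$-growth at $-\infty$ plus Gronwall excludes $\zeta>0$; only then do Theorem~\ref{thm:betterlip} and Corollary~\ref{cor:betterlip} iterate down to the solution of \eqref{e:KAPPABEST}. Without this cone-condition argument (or an equivalent), your proof of the stated Lipschitz constant is incomplete.

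The sketch for (III) would also fail as written. Comparing $T(t,\tau)\eta$ with the global solution through $P_\Sigma(\tau)\eta$ does not control $\chi(t)=T(t,\tau)\eta-P_\Sigma(t)T(t,\tau)\eta$, since $P_\Sigma(t)T(t,\tau)\eta\neq T(t,\tau)P_\Sigma(\tau)\eta$ and the two trajectories may separate at a rate governed by $\rho$, which destroys any decay estimate obtained by a triangle inequality. The paper's Lemma~\ref{lem:error} instead runs, for every later time $t$, the backward fixed-point construction through $T(t,\tau)\eta$, producing $q_*(\cdot,t)$, proves the two auxiliary bounds \eqref{e:QSTAR1}--\eqref{e:QSTAR2} comparing $q_*(s,t)$ with $Q(s)T(s,\tau)\eta$ and with $q_*(s,\tau)$, and only then applies Gronwall to $\chi$; this is where the denominator $\gamma-\rho-L_1\Gamma(1,\kappa_\Sigma)$ in $\omega$ comes from. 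You correctly flag this bookkeeping as the second obstacle, but the route you propose does not produce it; once (III) is established, your treatment of (IV) and of the inertial case $\omega>0$ is fine.
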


We now briefly recall the definition of a pullback attractor, see \cite{CLR} for {more details on} 
this object. 
    A \textit{pullback attractor} 
    {$\{\cala(t)\colon t\in\R\}$ for $\{T(t,\tau)\colon t\geq\tau\}$} is a family of compact subsets of $X$ such that $\{\cala(t)\colon t\in\R\}$ is invariant, pullback attracts bounded subsets of $X$, i.e., for each bounded subset $B$ of $X$, and $t
\in \R$, 
\begin{equation*}
    \lim_{
    \tau\to -\infty}\dist(T(t,\tau)B,\mathcal{A}(t))=0,
\end{equation*}
    and it is the minimal family of closed subsets which pullback attracts bounded subsets.
\begin{rem} 
    If $\{\calm(t)\colon t\in\R\}$ is an inertial manifold (if $\omega>0$ in Theorem \ref{thm:MAIN1}) and there exists pullback attractor $\{\cala(t)\colon t\in\R\}$ for $\{T(t,\tau)\colon t\geq\tau\}$, then minimality implies that $\cala(t)\subset \calm(t)$, for all $t\in\R$.
\end{rem}

\subsection{Fixed point argument.}\label{subsec:ZET} 

 Let $z\colon(-\infty,\tau]\to X$ be a solution of \eqref{e:VCF}. 
 Then after representing $Q(t)z(t) = q(t)$ and $(I-Q(t))z(t)=p(t)$, projecting \eqref{e:VCF} on $N(\tau)$ and {$S(t)$}, the following variation of constants formulas hold
\begin{align*}
    & q(\tau)=L(\tau,t)q(t)+\int_{t}^{\tau}L(\tau,s)Q(s)f(s,z(s))ds\ \ \text{for}\ \ {-\infty<t\leq\tau},\\
    & p(t)=L(t,r)p(r)+\int_{r}^{t}L(t,s)(I-Q(s))f(s,z(s))ds\ \ \text{for}\ \  {-\infty<r\leq t\leq\tau.}
\end{align*}
In the first of these equations we use the invertibility of $L(\tau,t)$, which yields
$$q(t) = L(t,\tau)q(\tau)-\int_{t}^{\tau}L(t,s)Q(s)f(s,z(s))ds\ \ \text{for}\ \  -\infty<t\leq\tau.$$

To deal with  the second equation note that for $\sigma<\gamma$ we have
$$
|L(t,r)p(r)|_{S(t)} \leq e^{-\gamma(t-r)}|p(r)|_{S(r)} \leq \frac{e^{(\gamma-\sigma)r}}{\Gamma(0,1)}e^{-\gamma t}e^{\sigma\tau}\sup_{r\leq \tau}\left\{e^{\sigma(r-\tau)}\|z(r)\|_r\right\},$$
which tends to zero as $r\to -\infty$ provided the last supremum is finite. Then this means that
\begin{equation}\label{eq:fixed_point}
z(t) = L(t,\tau)q(\tau)-\int_{t}^{\tau}L(t,s)Q(s)f(s,z(s))ds  + \int_{-\infty}^{t}L(t,s)(I-Q(s))f(s,z(s))ds,\ {t\leq\tau.}
\end{equation}
Above equation motivates the following fixed point problem.

Given $\tau\in\R$ we consider the space 
$$E_{\sigma,\tau}=\left\{z\in C((-\infty,\tau];X)\colon \norm{z}_{E_{\sigma,\tau}}=\sup_{t\leq\tau}e^{\sigma(t-\tau)}\norm{z(t)}_{t}<\infty\right\},$$
which is a Banach space with the given norm. Having fixed $\tau\in\R$, $\eta\in X$, we consider the mapping $H_{\tau,\eta}\colon E_{\sigma,\tau}\to C((-\infty,\tau];X)$ given by the formula
$$H_{\tau,\eta}z(t)=L(t,\tau)Q(\tau)\eta-\int_{t}^{\tau}L(t,s)Q(s)f(s,z(s))ds+\int_{-\infty}^{t}L(t,s)(I-Q(s))f(s,z(s))ds,$$
for  $-\infty<t\leq\tau$.
\begin{lem}\label{lem:fixed_point}
	Assume the gap condition \eqref{e:GAPGENERAL00}. The mapping $H_{\tau,\eta}$ is a contraction leading from $E_{\sigma,\tau}$ to itself, and hence it has a unique fixed point $z_{\tau,\eta}\in E_{\sigma,\tau}$. 
\end{lem}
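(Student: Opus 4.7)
The plan is to split the action of $H_{\tau,\eta}$ along the decomposition $X=N(t)\oplus S(t)$, use the renormed estimates from Lemma~\ref{lem:norms} (which absorb the constant $M$ into the norms $|\cdot|_{N(t)}$, $|\cdot|_{S(t)}$), and then combine the two components through the admissible norm $\Gamma$. This way the gap constant $\Gamma\bigl(\tfrac{L_1}{\sigma-\rho},\tfrac{L_2}{\gamma-\sigma}\bigr)$ appears naturally as the Lipschitz coefficient.

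\textbf{Step 1: component estimates.} For $z\in E_{\sigma,\tau}$ and $t\leq\tau$, I would apply $Q(t)$ and $I-Q(t)$ to $H_{\tau,\eta}z(t)$, noting that $Q$ commutes with the linear process. This gives
\begin{align*}
Q(t)H_{\tau,\eta}z(t)&=L(t,\tau)Q(\tau)\eta-\int_t^\tau L(t,s)Q(s)f(s,z(s))\,ds,\\
(I-Q(t))H_{\tau,\eta}z(t)&=\int_{-\infty}^t L(t,s)(I-Q(s))f(s,z(s))\,ds.
\end{align*}
Combining the two renormed bounds $e^{\rho t}|L(t,s)x|_{N(t)}\leq e^{\rho s}|x|_{N(s)}$ ($t\leq s$) and $e^{\gamma t}|L(t,s)x|_{S(t)}\leq e^{\gamma s}|x|_{S(s)}$ ($t\geq s$) with the Lipschitz assumptions \eqref{e:L1}, \eqref{e:L2} applied to $f(s,\cdot)$ (using $f(s,0)=0$), and inserting the growth bound $\norm{z(s)}_s\leq e^{-\sigma(s-\tau)}\norm{z}_{E_{\sigma,\tau}}$, I would compute the two elementary integrals
\[
\int_t^\tau e^{-\rho(t-s)}e^{-\sigma(s-\tau)}\,ds=\frac{e^{-\sigma(t-\tau)}-e^{-\rho(t-\tau)}}{\sigma-\rho},\qquad
\int_{-\infty}^t e^{-\gamma(t-s)}e^{-\sigma(s-\tau)}\,ds=\frac{e^{-\sigma(t-\tau)}}{\gamma-\sigma}.
\]
(Here $\sigma\in(\rho,\gamma)$ ensures both are finite and positive.) After multiplying by $e^{\sigma(t-\tau)}$ and using $e^{(\sigma-\rho)(t-\tau)}\leq 1$ for $t\leq\tau$, this gives
\begin{align*}
e^{\sigma(t-\tau)}|Q(t)H_{\tau,\eta}z(t)|_{N(t)}&\leq|Q(\tau)\eta|_{N(\tau)}+\tfrac{L_1}{\sigma-\rho}\norm{z}_{E_{\sigma,\tau}},\\
e^{\sigma(t-\tau)}|(I-Q(t))H_{\tau,\eta}z(t)|_{S(t)}&\leq\tfrac{L_2}{\gamma-\sigma}\norm{z}_{E_{\sigma,\tau}}.
\end{align*}

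\textbf{Step 2: recombining via $\Gamma$.} Applying monotonicity of the admissible norm $\Gamma$ (Definition~\ref{defn:ADMISSIBLENORM}) followed by its triangle inequality and positive homogeneity, and using that $\Gamma$ is used to define $\norm{\cdot}_t$, I obtain
\[
e^{\sigma(t-\tau)}\norm{H_{\tau,\eta}z(t)}_t\leq\Gamma(1,0)\,|Q(\tau)\eta|_{N(\tau)}+\Gamma\!\left(\tfrac{L_1}{\sigma-\rho},\tfrac{L_2}{\gamma-\sigma}\right)\norm{z}_{E_{\sigma,\tau}},
\]
uniformly in $t\leq\tau$. Since the right-hand side is finite and independent of $t$, this will show $\sup_{t\leq\tau}e^{\sigma(t-\tau)}\norm{H_{\tau,\eta}z(t)}_t<\infty$; it therefore remains to verify continuity of $H_{\tau,\eta}z$ as a map $(-\infty,\tau]\to X$, which follows from the joint continuity property (iii) in the definition of the linear evolution process, continuity of $f$, and dominated convergence applied to the (absolutely convergent) improper integral on $(-\infty,t]$, the bound of Step~1 providing the required integrable majorant.

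\textbf{Step 3: contraction.} The difference $H_{\tau,\eta}z_1-H_{\tau,\eta}z_2$ has no $L(t,\tau)Q(\tau)\eta$ term, so applying the same computation with $f(s,z_1(s))-f(s,z_2(s))$ in place of $f(s,z(s))$ yields
\[
\norm{H_{\tau,\eta}z_1-H_{\tau,\eta}z_2}_{E_{\sigma,\tau}}\leq\Gamma\!\left(\tfrac{L_1}{\sigma-\rho},\tfrac{L_2}{\gamma-\sigma}\right)\norm{z_1-z_2}_{E_{\sigma,\tau}}.
\]
By the gap condition \eqref{e:GAPGENERAL00} the constant is strictly less than $1$, so $H_{\tau,\eta}$ is a contraction on the Banach space $E_{\sigma,\tau}$, and Banach's fixed-point theorem yields the unique fixed point $z_{\tau,\eta}$.

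The arguments are essentially routine once the right decomposition is used; the only conceptual step is recognising that the renormed estimates of Lemma~\ref{lem:norms} combined with the coordinatewise monotonicity and triangle inequality of $\Gamma$ produce exactly the gap quantity $\Gamma\bigl(\tfrac{L_1}{\sigma-\rho},\tfrac{L_2}{\gamma-\sigma}\bigr)$, so no extra constants creep in and the sharp gap of Definition~\ref{def:gap} suffices. The mildest technical care is needed in justifying convergence and continuity of the improper integral at $-\infty$, which is handled by the uniform bound on the integrand from Step~1.
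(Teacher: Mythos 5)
Your proposal is correct and follows essentially the same route as the paper: project onto $N(t)$ and $S(t)$, use the renormed exponential estimates together with \eqref{e:L1}--\eqref{e:L2} and $f(s,0)=0$, compute the same two elementary integrals, combine via monotonicity of $\Gamma$ so that the contraction constant is exactly $\Gamma\bigl(\tfrac{L_1}{\sigma-\rho},\tfrac{L_2}{\gamma-\sigma}\bigr)$, and conclude by the Banach fixed point theorem. The only cosmetic differences are that you split off the $\eta$-term with the triangle inequality for $\Gamma$ (the paper keeps it inside the single $\Gamma$ expression) and that you explicitly note the continuity of $t\mapsto H_{\tau,\eta}z(t)$, which the paper leaves implicit.
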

\begin{proof}
We have
\begin{align*}&\norm{H_{\tau,\eta}z(t)}_{t}\\
	&=\Gamma\left(\left|L(t,\tau)Q(\tau)\eta-\int_{t}^{\tau}L(t,s)Q(s)f(s,z(s))ds\right|_{N(t)},\left|\int_{-\infty}^{t}L(t,s)(I-Q(s))f(s,z(s))ds\right|_{S(t)}\right).\end{align*}
We estimate both terms in the above norm as follows 
$$\left|L(t,\tau)Q(\tau)\eta-\int_{t}^{\tau}L(t,s)Q(s)f(s,z(s))ds\right|_{N(t)}\leq e^{\rho(\tau-t)}|Q(\tau)\eta|_{N(\tau)}+L_1\int_{t}^{\tau}e^{\rho(s-t)}\norm{z(s)}_{s}ds,$$
and
$$\left|\int_{-\infty}^{t}L(t,s)(I-Q(s))f(s,z(s))ds\right|_{S(t)}\leq L_2\int_{-\infty}^{t}e^{\gamma(s-t)}\norm{z(s)}_{s}ds.$$
Thus
\begin{align*}
&e^{\sigma(t-\tau)}\norm{H_{\tau,\eta}z(t)}_{t}\\
&\ \ \leq \Gamma\left(e^{(\sigma-\rho)(t-\tau)}|Q(\tau)\eta|_{N(\tau)}+L_1\norm{z}_{E_{\sigma,\tau}}\int_{t}^{\tau}e^{(\sigma-\rho)(t-s)}ds,L_2\norm{z}_{E_{\sigma,\tau}}\int_{-\infty}^{t}e^{(\gamma-\sigma)(s-t)}ds\right)\\
&\ \ =\Gamma\left(e^{(\sigma-\rho)(t-\tau)}|Q(\tau)\eta|_{N(\tau)}+L_1\norm{z}_{E_{\sigma,\tau}}\frac{1-e^{(\sigma-\rho)(t-\tau)}}{\sigma-\rho},L_2\norm{z}_{E_{\sigma,\tau}}\frac{1}{\gamma-\sigma}\right)\\
&\ \ \leq\Gamma\left(|Q(\tau)\eta|_{N(\tau)}+L_1\norm{z}_{E_{\sigma,\tau}}\frac{1}{\sigma-\rho},L_2\norm{z}_{E_{\sigma,\tau}}\frac{1}{\gamma-\sigma}\right).
\end{align*}
Therefore, since $\gamma>\sigma>\rho$, we get
\begin{equation*}
\norm{H_{\tau,\eta}z}_{E_{\sigma,\tau}}\leq \Gamma\left(|Q(\tau)\eta|_{N(\tau)}+L_1\norm{z}_{E_{\sigma,\tau}}\frac{1}{\sigma-\rho},L_2\norm{z}_{E_{\sigma,\tau}}\frac{1}{\gamma-\sigma}\right)<\infty,
\end{equation*}
and $H_{\tau,\eta}$ leads from $E_{\sigma,\tau}$ to itself. 

To prove that it is a contraction, estimating in a similar way, we obtain
$$\norm{H_{\tau,\eta}z_1-H_{\tau,\eta}z_2}_{E_{\sigma,\tau}}\leq\norm{z_1-z_2}_{E_{\sigma,\tau}}\Gamma\left(\frac{L_1}{\sigma-\rho},\frac{L_2}{\gamma-\sigma}\right)\ \ \textrm{for all}\ \ z_1,z_2\in E_{\sigma,\tau},$$
which ends the proof by \eqref{e:GAPGENERAL00}.
\end{proof}

Thus there exists a unique function $z_{\tau,\eta}\in E_{\sigma,\tau}$ such that
$$z_{\tau,\eta}(t)=L(t,\tau)Q(\tau)\eta-\int_{t}^{\tau}L(t,s)Q(s)f(s,z_{\tau,\eta}(s))ds+\int_{-\infty}^{t}L(t,s)(I-Q(s))f(s,z_{\tau,\eta}(s))ds,$$
It is clear that $z_{\tau,\eta}$ does not depend on $(I-Q(\tau))\eta$ and hence $z_{\tau,\eta}=z_{\tau,Q(\tau)\eta}$ for $\tau\in\R$ and $\eta\in X$. Moreover $z_{\tau,0}=0$ for every $\tau\in\R$.

\subsection{Definition of the manifold and its invariance.} 
 In this subsection we will use the notation 
$$q(t)=Q(t)z(t)\quad\text{and}\quad p(t)=(I-Q(t))z(t)\ \ \text{for}\ \ z\in C((-\infty,\tau];X).$$
\begin{lem}\label{lem:lemma_solution}
    Let $\eta\in X$ and $\tau\in \R$ be given. The function $z$ is a fixed point of $H_{\tau,\eta}$ in $E_{\sigma,\tau}$ if and only if $z\in E_{\sigma,\tau}$ is a solution of \eqref{e:VCF} and $Q(\tau)z(\tau) = Q(\tau)\eta$. 
\end{lem}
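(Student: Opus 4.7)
The plan is to establish both directions of the equivalence, drawing on the manipulation of the variation of constants formula already sketched in Subsection~\ref{subsec:ZET}.

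For the forward direction, assume $z\in E_{\sigma,\tau}$ is a fixed point of $H_{\tau,\eta}$. First I would evaluate the fixed point identity at $t=\tau$: the middle integral $\int_\tau^\tau$ vanishes, giving
$$z(\tau)=Q(\tau)\eta+\int_{-\infty}^\tau L(\tau,s)(I-Q(s))f(s,z(s))\,ds.$$
Since the integrand takes values in $S(s)$ and $L(\tau,s)$ maps $S(s)$ into $S(\tau)$ (intertwining in Definition~\ref{def:splitting}(i)), the integral lies in $S(\tau)$, so applying $Q(\tau)$ yields $Q(\tau)z(\tau)=Q(\tau)\eta$. Next, for arbitrary $r\leq t\leq \tau$, I would apply $L(t,r)$ to the fixed point expression for $z(r)$ and subtract it from the expression for $z(t)$. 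The key algebraic fact is that $L(t,r)L(r,s)=L(t,s)$ holds on $Q(s)X$ for any ordering of $r,s,t$ (and trivially on $S(s)$ when $r\leq s\leq t$, by the semigroup law); this follows from the definition of $L(r,s)$ on $Q(s)X$ as the inverse of $L(s,r)|_{Q(r)X}$. After this manipulation, the $N$- and $S$-components assemble into
$$z(t)=L(t,r)z(r)+\int_r^t L(t,s)f(s,z(s))\,ds\ \ \text{for}\ r\leq t\leq\tau.$$
By the Lipschitz assumptions \eqref{e:L1}--\eqref{e:L2} on $f$, the integral equation \eqref{e:VCF} enjoys uniqueness, hence $z(t)=T(t,r)z(r)$ on $[r,\tau]$; since $r$ is arbitrary, $z$ is a solution of \eqref{e:VCF} on $(-\infty,\tau]$ in the sense of the definition in Section~\ref{sec:2}.

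For the reverse direction, suppose $z\in E_{\sigma,\tau}$ is a solution with $Q(\tau)z(\tau)=Q(\tau)\eta$. Write $q(t)=Q(t)z(t)$, $p(t)=(I-Q(t))z(t)$ and project the relation $z(t)=L(t,r)z(r)+\int_r^t L(t,s)f(s,z(s))\,ds$ onto $N(t)$ and $S(t)$. For the unstable part, taking $r=t$ and the endpoint $\tau$ gives $q(\tau)=L(\tau,t)q(t)+\int_t^\tau L(\tau,s)Q(s)f(s,z(s))\,ds$. Applying the inverse $L(t,\tau)\colon Q(\tau)X\to Q(t)X$ (and again using $L(t,\tau)L(\tau,s)=L(t,s)$ on $Q(s)X$) yields
$$q(t)=L(t,\tau)Q(\tau)\eta-\int_t^\tau L(t,s)Q(s)f(s,z(s))\,ds.$$
For the stable part, I would send $r\to-\infty$ in
$$p(t)=L(t,r)p(r)+\int_r^t L(t,s)(I-Q(s))f(s,z(s))\,ds.$$
Using Lemma~\ref{lem:norms}, $|L(t,r)p(r)|_{S(t)}\leq e^{-\gamma(t-r)}|p(r)|_{S(r)}$, and since $z\in E_{\sigma,\tau}$ the factor $|p(r)|_{S(r)}$ is bounded by a multiple of $e^{\sigma(\tau-r)}\|z\|_{E_{\sigma,\tau}}$, giving a bound of order $e^{(\gamma-\sigma)r}$, which vanishes because $\sigma<\gamma$; the same growth control ensures absolute convergence of the improper integral. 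Combining the $N$- and $S$-components reproduces $z(t)=H_{\tau,\eta}z(t)$.

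The main obstacle I anticipate is the careful handling of $L(t,s)$ when $t<s$, that is, when the operator is interpreted as an inverse on $Q(s)X$: verifying the composition identity $L(t,r)L(r,s)=L(t,s)$ across all combinations of orderings of $r,s,t$ needed in the forward direction, and justifying the exchange of $L(t,r)$ with the $Q$-projection of the $N$-integral, are the places where one must go back to Definition~\ref{def:splitting}(i)--(ii) rather than invoking the semigroup law naively. Once these pointwise identities are in hand, everything else is either integration against a Lipschitz integrand or a straightforward application of the decay estimate $e^{(\gamma-\sigma)r}\to 0$ provided by the gap condition.
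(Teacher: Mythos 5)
Your proposal is correct and follows essentially the same route as the paper: the ``solution $\Rightarrow$ fixed point'' direction is exactly the computation carried out above Lemma~\ref{lem:fixed_point} (inverting $L(\tau,t)$ on the $Q$-part and letting $r\to-\infty$ on the $S$-part using $\sigma<\gamma$), and the ``fixed point $\Rightarrow$ solution'' direction is the same recombination of the $N$- and $S$-components into Duhamel formulas on subintervals $[r,\theta]\subset(-\infty,\tau]$, relying on the composition identities for $L$ across $Q(s)X$. Your two extra touches — reading off $Q(\tau)z(\tau)=Q(\tau)\eta$ at $t=\tau$ and invoking uniqueness for the integral equation to identify $z(t)=T(t,r)z(r)$ — only make explicit what the paper leaves implicit.
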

\begin{proof}
    The computation above Lemma \ref{lem:fixed_point} shows that a solution of \eqref{e:VCF} defined on the interval $(-\infty,\tau]$ which belongs to $E_{\sigma,\tau}$ and satisfies $Q(\tau)z(\tau) = Q(\tau)\eta$ is a fixed point of $H_{\tau,\eta}$. For an opposite implication observe that for $-\infty<t\leq\theta\leq\tau$ we have 
\begin{equation*}
\begin{split} 
q(t)&=L(t,\tau)Q(\tau)\eta-\int_{t}^{\tau}L(t,s)Q(s)f(s,z(s))ds\\
&=L(t,\theta)\left(L(\theta,\tau)Q(\tau)\eta-\int_{\theta}^{\tau}L(\theta,s)Q(s)f(s,z(s))ds\right)-\int_{t}^{\theta}L(t,s)Q(s)f(s,z(s))ds\\
& =L(t,\theta)q(\theta)-\int_{t}^{\theta}L(t,s)Q(s)f(s,z(s))ds.
\end{split}
\end{equation*}
Inverting $L(t,\theta)$ yields
$$
q(\theta) = L(\theta,t)q(t) + \int_t^\theta L(\theta,s)Q(s)f(s,z(s))\, ds.
$$
On the other hand, for $-\infty<t\leq \theta\leq\tau$ {we have}
\begin{equation*}
\begin{split}
p(\theta)&=\int_{-\infty}^{\theta}L(\theta,s)(I-Q(s))f(s,z(s))ds\\
&=L(\theta,t)\int_{-\infty}^{t}L(t,s)(I-Q(s))f(s,z(s))ds+\int_{t}^{\theta}L(\theta,s)(I-Q(s))f(s,z(s))ds\\
&=L(\theta,t)p(t)+\int_{t}^{\theta}L(\theta,s)(I-Q(s))f(s,z(s))ds,
\end{split}
\end{equation*}
and the proof is complete. 
\end{proof}
Note that the uniqueness of the fixed point of $H_{\tau,\eta}$ and the above lemma imply that for a given $\eta\in X$ and $\tau\in \R$ there exists a unique function $z\in E_{\sigma,\tau}$ which is a solution of \eqref{e:VCF}  such that $Q(\tau)z=Q(\tau)\eta$ and at the same time the fixed point $z_{\tau,\eta}$ of $H_{\tau,\eta}$. We  are in a position to define the function $\Sigma\colon\R\times X\to X$, whose graph is a candidate for the manifold that we are constructing,
\begin{equation}\label{e:DEFNSIGMA}
\Sigma(\tau,\eta)=(I-Q(\tau))z_{\tau,\eta}(\tau)=\int_{-\infty}^{\tau}L(\tau,s)(I-Q(s))f(s,z_{\tau,\eta}(s))ds\ \ \text{for}\ \ \tau\in\R\ \ \text{and}\ \ \eta\in X.
\end{equation}
From its construction, we know that $\Sigma(\tau,\eta)=\Sigma(\tau,Q(\tau)\eta)=(I-Q(\tau))\Sigma(\tau,\eta)$ for $\tau\in\R$ and $\eta\in X$, and $\Sigma(\tau,0)=0$ for $\tau\in\R$.
Now, we define  
$\mathcal{M}(\tau)$ as the graph of $\Sigma(\tau,
\cdot):Q(\tau)X\to (I-Q(\tau))X$, that is, 
\begin{equation}\label{e:MANIFOLD}
\calm(\tau):=\{Q(\tau)\eta+\Sigma(\tau,Q(\tau)\eta)\colon \eta\in X\}=\{z_{\tau,\eta}(\tau)\colon\eta\in X\}.
\end{equation}
In other words, 
$\{\mathcal{M}(\tau)\colon \tau\in \R\}$
is a family of subsets of $X$ given by
the image of the nonlinear projections, 
\begin{equation}\label{eq-Def_P_Sigma}
    P_\Sigma(\tau):=Q(\tau)+\Sigma(\tau,\cdot) \text{ for } \tau
\in \R.
\end{equation}
This family satisfies the following result.
\begin{lem}\label{lem:characterization}
    Fix $\tau\in \R$. We have $\eta\in \mathcal{M}(\tau)$ if and only if there exists a solution  $z\colon(-\infty,\tau]\to X$ of \eqref{e:VCF} such that $z(\tau) = \eta$ and $z\in E_{\sigma,\tau}$. In consequence, this means that  \eqref{eq-characterization-inertail} holds.
\end{lem}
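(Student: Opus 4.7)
The plan is to unwind the definition \eqref{e:MANIFOLD} of $\mathcal{M}(\tau)$ as $\{z_{\tau,\eta}(\tau)\colon \eta\in X\}$ and combine it with Lemma \ref{lem:lemma_solution}, which identifies fixed points of $H_{\tau,\eta}$ with solutions of \eqref{e:VCF} in $E_{\sigma,\tau}$ whose $Q(\tau)$-projection at time $\tau$ matches $Q(\tau)\eta$. I will first prove the half-line version of the equivalence (a point $\eta$ lies in $\mathcal{M}(\tau)$ iff it is attained at time $\tau$ by some solution on $(-\infty,\tau]$ belonging to $E_{\sigma,\tau}$), and then deduce \eqref{eq-characterization-inertail} by gluing a backward solution with a forward orbit of the nonlinear process.

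For the forward implication of the half-line statement, if $\eta\in\mathcal{M}(\tau)$, then by \eqref{e:MANIFOLD} there exists $\xi\in X$ with $\eta=z_{\tau,\xi}(\tau)$, and Lemma \ref{lem:lemma_solution} gives that $z:=z_{\tau,\xi}\in E_{\sigma,\tau}$ is a solution of \eqref{e:VCF} with $z(\tau)=\eta$. Conversely, given a solution $z\in E_{\sigma,\tau}$ of \eqref{e:VCF} on $(-\infty,\tau]$ with $z(\tau)=\eta$, the identity $Q(\tau)z(\tau)=Q(\tau)\eta$ is automatic, so Lemma \ref{lem:lemma_solution} identifies $z$ as a fixed point of $H_{\tau,\eta}$. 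The uniqueness of this fixed point (Lemma \ref{lem:fixed_point}) forces $z=z_{\tau,\eta}$, and hence $\eta=z_{\tau,\eta}(\tau)\in\mathcal{M}(\tau)$.

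To upgrade to the global characterization \eqref{eq-characterization-inertail}, I first note that $z\in E_{\sigma,\tau}$ is equivalent, up to the finite factor $e^{\sigma\tau}$, to the bound $\sup_{r\leq \tau}\{e^{\sigma r}\|z(r)\|_r\}<\infty$. For $\eta\in\mathcal{M}(\tau)$, I define $z\colon\R\to X$ by $z(t):=z_{\tau,\eta}(t)$ for $t\leq\tau$ and $z(t):=T(t,\tau)\eta$ for $t\geq\tau$; the two pieces agree at $\tau$, and the process identity $T(s,r)=T(s,\tau)T(\tau,r)$ for $s\geq\tau\geq r$, combined with the fact that $z_{\tau,\eta}$ solves \eqref{e:VCF} on $(-\infty,\tau]$, makes $z$ a genuine global solution that inherits the required sup bound at $-\infty$. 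Conversely, any global solution $z$ of \eqref{e:VCF} with $z(\tau)=\eta$ and the prescribed growth at $-\infty$ restricts on $(-\infty,\tau]$ to an element of $E_{\sigma,\tau}$, so the already proved half-line statement yields $\eta\in\mathcal{M}(\tau)$.

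The only (minor) obstacle is the consistency check at the gluing point $\tau$, namely verifying $T(s,r)z(r)=z(s)$ in the mixed case $r\leq\tau\leq s$. This is not an analytical issue but a bookkeeping one: it reduces immediately to the process identity above applied to $\eta=z_{\tau,\eta}(\tau)$, so the argument passes through without any further estimate.
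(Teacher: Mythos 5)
Your proposal is correct and follows essentially the same route as the paper: both directions rest on Lemma~\ref{lem:lemma_solution} together with the uniqueness of the fixed point of $H_{\tau,\eta}$, and the global characterization \eqref{eq-characterization-inertail} is obtained exactly as in the paper by extending the backward solution forward via $T(t,\tau)\eta$. The only cosmetic difference is that you use the form $\calm(\tau)=\{z_{\tau,\xi}(\tau)\colon\xi\in X\}$ of \eqref{e:MANIFOLD} where the paper works with the equivalent condition $\Sigma(\tau,\eta)=(I-Q(\tau))\eta$.
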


\begin{proof}
Note that $\eta\in \mathcal{M}(\tau)$ if and only if $\Sigma(\tau,\eta)=(I-Q(\tau))\eta$. For $\eta\in \mathcal{M}(\tau)$ we consider the unique fixed point $z_{\tau,\eta}$ of $H_{\tau,\eta}$. By Lemma \ref{lem:lemma_solution} this function is a solution of \eqref{e:VCF}, belongs for $E_{\sigma,\tau}$, and $Q(\tau)z_{\tau,\eta}(\tau) = Q(\tau)\eta$.  Moreover, by definition of $\Sigma$, we have $(I-Q(\tau))z_{\tau,\eta}(\tau) = \Sigma(\tau,\eta)$ and hence $z_{\tau,\eta}(\tau) =\eta$. 

Now assume that $z\colon(-\infty,\tau]\to X$ is a solution of \eqref{e:VCF} such that $z(\tau)=\eta$ and $z\in E_{\sigma,\tau}$. Then, by Lemma \ref{lem:lemma_solution} the function $z$ is a unique fixed point of 
$H_{\tau,\eta}$, and it must be equal to $z_{\tau,\eta}$. Therefore, by definition, $\Sigma(\tau,\eta) = (I-Q(\tau))z(\tau)$. This means that $\eta=z(\tau) = Q(\tau)\eta + \Sigma(\tau,\eta)\in \calm(\tau).$

Extending $z(t)=T(t,\tau)\eta$ for $t\geq \tau$, we get a global solution of \eqref{e:VCF} so that \eqref{eq-characterization-inertail} holds.
\end{proof}

The above result allows us to easily deduce the invariance of constructed family of manifolds $\{\calm(\tau)\colon \tau\in \R \}$. 

\begin{lem}\label{lem:invariance}
The family $\{\calm(\tau)\colon \tau\in \R \}$ is invariant, that is, $T(t,\tau)\calm(\tau) = \calm(t)$ for every $(t,\tau)\in J$. 
\end{lem}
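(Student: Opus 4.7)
The plan is to use the characterization in Lemma~\ref{lem:characterization}: $\eta\in\calm(\tau)$ if and only if there is a backward solution $z\colon(-\infty,\tau]\to X$ of \eqref{e:VCF} with $z(\tau)=\eta$ belonging to $E_{\sigma,\tau}$. Both inclusions then reduce to the observation that the class of such backward solutions is stable under shifting the base time forward by a finite amount — in one direction by extending the solution using the process, and in the other by restricting it.

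For the forward inclusion $T(t,\tau)\calm(\tau)\subset\calm(t)$, I would take $\eta\in\calm(\tau)$, fix the backward solution $z\colon(-\infty,\tau]\to X$ provided by Lemma~\ref{lem:characterization}, and glue it with the forward trajectory: define $\tilde z(s)=z(s)$ for $s\leq\tau$ and $\tilde z(s)=T(s,\tau)\eta$ for $s\in[\tau,t]$. Standard concatenation using the evolution property and \eqref{e:VCF} shows that $\tilde z$ is a solution of \eqref{e:VCF} on $(-\infty,t]$ with $\tilde z(t)=T(t,\tau)\eta$. The growth condition in $E_{\sigma,t}$ splits: on the compact piece $[\tau,t]$ the continuous function $\tilde z$ is trivially bounded, while on $(-\infty,\tau]$ I would rewrite
\begin{equation*}
e^{\sigma(s-t)}\norm{\tilde z(s)}_s=e^{\sigma(\tau-t)}\,e^{\sigma(s-\tau)}\norm{z(s)}_s\leq e^{\sigma(\tau-t)}\norm{z}_{E_{\sigma,\tau}}.
\end{equation*}
Hence $\tilde z\in E_{\sigma,t}$ and Lemma~\ref{lem:characterization} yields $T(t,\tau)\eta\in\calm(t)$.

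For the reverse inclusion $\calm(t)\subset T(t,\tau)\calm(\tau)$, I would take $\xi\in\calm(t)$, fix the corresponding backward solution $z\colon(-\infty,t]\to X$ with $z(t)=\xi$ in $E_{\sigma,t}$, and simply restrict it to $(-\infty,\tau]$. Restriction trivially preserves being a solution of \eqref{e:VCF}, and the growth at the new base time $\tau$ follows by the analogous rescaling $e^{\sigma(s-\tau)}=e^{\sigma(t-\tau)}e^{\sigma(s-t)}$ for $s\leq\tau$, showing $z|_{(-\infty,\tau]}\in E_{\sigma,\tau}$. Thus $z(\tau)\in\calm(\tau)$ by Lemma~\ref{lem:characterization}, and since $z$ is a solution, $T(t,\tau)z(\tau)=z(t)=\xi$, so $\xi\in T(t,\tau)\calm(\tau)$.

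There is really no serious obstacle here: once Lemma~\ref{lem:characterization} is in hand, invariance is pure bookkeeping with the exponential weight $e^{\sigma(\,\cdot\,-t)}$ versus $e^{\sigma(\,\cdot\,-\tau)}$, together with the one-line check that forward extension by $T(\cdot,\tau)\eta$ yields a solution on a larger interval. The only minor point one should mention for completeness is that in the forward inclusion the concatenated function $\tilde z$ is continuous at $s=\tau$ (because $T(\tau,\tau)\eta=\eta=z(\tau)$) and satisfies \eqref{e:VCF} past $\tau$ precisely by definition of the nonlinear process; this is what legitimizes treating it as a single backward solution with base time $t$.
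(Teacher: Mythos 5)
Your proposal is correct and follows essentially the same route as the paper: both inclusions are reduced to the characterization of Lemma~\ref{lem:characterization}, with positive invariance obtained by concatenating the backward solution with the forward trajectory $T(\cdot,\tau)\eta$ and negative invariance by restricting the backward solution to $(-\infty,\tau]$. The extra bookkeeping you supply on the exponential weights and the continuity of the glued solution is exactly what the paper leaves implicit.
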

\begin{proof}
    Take $\tau\in \R$, $\eta \in \calm(\tau)$, and  $t>\tau$. By Lemma \ref{lem:characterization} there exists a function $z\in E_{\sigma,\tau}$, solution of \eqref{e:VCF} on interval $(-\infty,\tau]$ such that $z(\tau) = \eta$. Define
    $$
    v(s) = \begin{cases}
        z(s)\ \ & \text{for}\ \ s\in (-\infty,\tau],\\
        T(s,\tau) \eta\ \ &\textrm{for}\ \ s\in (\tau,t].
    \end{cases}
    $$
    This is a solution of \eqref{e:VCF} on $(-\infty,t]$ belonging to $E_{\sigma,t}$. Hence $v(t)=T(t,\tau)\eta\in \calm(t)$ {so} we have the positive invariance.

    To deduce the negative invariance take $t\in \R$, $\eta \in \calm(t)$, and $\tau<t$. By Lemma \ref{lem:characterization} there exists $z\in E_{\sigma,t}$ which solves  \eqref{e:VCF} on the interval $(-\infty,t]$ such that $z(t)=\eta$. Consider $z|_{(-\infty,\tau]}$. This function belongs to $E_{\sigma,\tau}$ and solves 
\eqref{e:VCF} on $(-\infty,\tau]$. Hence $\xi = z|_{(-\infty,\tau]}(\tau) \in \calm(\tau)$. As $T(t,\tau)\xi = \eta$ it follows that $\calm(t)\subset T(t,\tau)\calm(\tau)$  and we have the negative invariance.
\end{proof}

\begin{rem}
Note that the results of this section imply that the invariant manifold is defined uniquely for a given family of projections $\{Q(t)\colon t\in \R\}$ for which we have the exponential splitting such that the gap condition \eqref{e:GAPGENERAL00} holds. Indeed, choosing the two exponents $\sigma_1<\sigma_2$ for which the gap condition holds, we denote the two constructed manifolds by $\calm_{\sigma_1}$ and $\calm_{\sigma_2}$ with the corresponding functions $\Sigma_{\sigma_1}$ and $\Sigma_{\sigma_2}$. Pick $t\in \R$. By \eqref{eq-characterization-inertail} we must have $\calm_{\sigma_1}(t)\subset \calm_{\sigma_2}(t)$. Suppose that $\eta \in \calm_{\sigma_2}(t) \setminus  \calm_{\sigma_1}(t)$. Consider $Q(t)\eta+\Sigma_{\sigma_1}(t,Q(t)\eta)$. This point belongs to $\calm_{\sigma_1}(t)$ and hence also to  $\calm_{\sigma_2}(t)$. But $\eta$ is a unique point in $\calm_{\sigma_2}(t)$ whose projection $Q(t)$ is equal to $Q(t)\eta$ and hence $\eta = Q(t)\eta+\Sigma_{\sigma_1}(t,Q(t)\eta) \in \calm_{\sigma_1}(t)$, a contradiction.
\end{rem}

\color{black}

\subsection{Cone condition and Lipschitzness of \texorpdfstring{$\Sigma$}.}
This section is devoted to the verification of the Lipschitzness of the function $\Sigma(t,\cdot)$ that is used to define the manifold $\{\calm(t)\colon t\in\R\}$. This will be done by means of the \textit{cone condition}. \color{black}  Assume the gap condition \eqref{e:GAPGENERAL00} given in Definition~\ref{def:gap} and let 
\begin{equation}\label{e:DEFNKAPPA}
\kappa = \frac{L_2}{L_1}\frac{\sigma-\rho}{\gamma-\sigma}. 
\end{equation}
We consider two solutions of \eqref{e:VCF}, which we call $z, \tilde z$, defined on an interval $\calt\subset\R$, which can be bounded or unbounded. We denote $$q(t)=Q(t)z(t),\ p(t)=(I-Q(t))z(t),\ \tilde q(t)=Q(t)\tilde z(t),\ \tilde p(t)=(I-Q(t))\tilde z(t)\ \ \text{for}\ \ t\in\calt.$$
Moreover, we set
$$u=q-\tilde q,\ v=p-\tilde{p}.$$
Then $t\mapsto |u(t)|_{N(t)}$ and $t\mapsto |v(t)|_{S(t)}$ are continuous functions on $\calt$. 
Let 
\begin{equation}\label{e:DEFNZETA}
\zeta(t):=|v(t)|_{S(t)}-\kappa |u(t)|_{N(t)}\ \text{for}\ t\in\calt.
\end{equation}
We will study the behavior of the function $\zeta\colon \calt\to \R$. 

\begin{lem}\label{lem:POS}
If the gap condition \eqref{e:GAPGENERAL00} holds then for any $r<\theta$, $r,\theta\in\calt$ if only $u(\theta)\neq 0$  or $v(r)\neq 0$ then 
$$\zeta(t) \geq 0\ \ \text{ for }\ \ t\in [r,\theta]\ \ \text{ implies }\ \ \zeta(r)>0.$$ 
\end{lem}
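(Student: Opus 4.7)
The plan is to reduce the lemma to an elementary algebraic dichotomy using two one-sided integral estimates on $\alpha(t):=|u(t)|_{N(t)}$ and $\beta(t):=|v(t)|_{S(t)}$. First, projecting \eqref{e:VCF} onto $Q(s)X$ and $(I-Q(s))X$, taking differences of the two solutions $z$, $\tilde z$, and inverting $L$ on the unstable fibers for the $Q$-component, I would extract for every $r\leq t_1\leq t_2\leq\theta$:
\begin{align*}
\alpha(t_1)&\leq e^{\rho(t_2-t_1)}\alpha(t_2)+L_1\int_{t_1}^{t_2}e^{\rho(s-t_1)}\Psi(s)\,ds,\\
\beta(t_2)&\leq e^{-\gamma(t_2-t_1)}\beta(t_1)+L_2\int_{t_1}^{t_2}e^{-\gamma(t_2-s)}\Psi(s)\,ds,
\end{align*}
where $\Psi(s):=\|z(s)-\tilde z(s)\|_s=\Gamma(\alpha(s),\beta(s))$, using Lemma~\ref{lem:norms} and \eqref{e:L1}--\eqref{e:L2}.

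Since $\zeta\geq 0$ on $[r,\theta]$ gives $\alpha(s)\leq\beta(s)/\kappa$, componentwise monotonicity and homogeneity of $\Gamma$ yield $\Psi(s)\leq\Gamma(\beta(s)/\kappa,\beta(s))=(\Gamma(1,\kappa)/\kappa)\beta(s)$. Inserting this into the $\beta$-inequality with $t_1=r$ and applying the standard Gronwall inequality to $t\mapsto e^{\gamma t}\beta(t)$ produces
\[
\beta(s)\leq\beta(r)e^{-\gamma_*(s-r)}\quad\text{on }[r,\theta],\qquad \gamma_*:=\gamma-L_2\Gamma(1/\kappa,1).
\]
By homogeneity of $\Gamma$ together with $\kappa=(L_2/L_1)(\sigma-\rho)/(\gamma-\sigma)$, the gap condition \eqref{e:GAPGENERAL00} rewrites equivalently as either $L_1\Gamma(1,\kappa)<\sigma-\rho$ or $L_2\Gamma(1/\kappa,1)<\gamma-\sigma$; together these give $\gamma_*>\sigma$ and
\[
a:=\frac{L_1\Gamma(1,\kappa)}{\gamma_*-\rho}<\frac{L_1\Gamma(1,\kappa)}{\sigma-\rho}<1.
\]

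Substituting the decay bound on $\beta$ into the $\alpha$-estimate at $t_1=r$, $t_2=\theta$ and computing the resulting exponential integral gives, with $\xi:=\theta-r>0$,
\[
\kappa\alpha(r)\leq \kappa e^{\rho\xi}\alpha(\theta)+a_\xi\beta(r),\qquad a_\xi:=a\bigl(1-e^{-(\gamma_*-\rho)\xi}\bigr)\in(0,1).
\]
I would then conclude by contradiction: suppose $\zeta(r)\leq 0$, so $\beta(r)\leq\kappa\alpha(r)$, which inserted above forces $\alpha(r)(1-a_\xi)\leq e^{\rho\xi}\alpha(\theta)$. On the other hand, reversing the Gronwall decay gives $\beta(r)\geq\beta(\theta)e^{\gamma_*\xi}$, and $\zeta(\theta)\geq 0$ upgrades this to $\beta(r)\geq\kappa\alpha(\theta)e^{\gamma_*\xi}$; combined with $\beta(r)\leq\kappa\alpha(r)$ this yields $\alpha(r)\geq\alpha(\theta)e^{\gamma_*\xi}$. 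Chaining the two bounds on $\alpha(r)$ produces
\[
\alpha(\theta)\bigl((1-a_\xi)e^{\gamma_*\xi}-e^{\rho\xi}\bigr)\leq 0,
\]
and an elementary simplification rewrites the bracket as $(1-a)(e^{\gamma_*\xi}-e^{\rho\xi})>0$. Hence $\alpha(\theta)=0$, then $\alpha(r)=0$, then $\beta(r)\leq\kappa\alpha(r)=0$; i.e.\ $u(\theta)=0$ and $v(r)=0$, contradicting the hypothesis.

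The main obstacle is the asymmetry between the two subspace estimates: the $N$-estimate runs backward from $\theta$ to $r$ thanks to invertibility of $L$ on the unstable fibers, while the $S$-estimate only runs forward from $r$ to $\theta$ and cannot be inverted directly. The cone hypothesis must simultaneously control $\Psi$ uniformly on $[r,\theta]$ and transport the sign information $\zeta(\theta)\geq 0$ back to $r$ through the inverted Gronwall bound; the strict inequality in \eqref{e:GAPGENERAL00} is precisely what ensures $a<1$, and hence delivers the \emph{strict} conclusion $\zeta(r)>0$ rather than the weaker $\zeta(r)\geq 0$.
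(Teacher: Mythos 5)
Your proposal is correct and takes essentially the same route as the paper's proof: the same two projected Duhamel estimates, the same cone bound $\Psi(s)\leq\Gamma(1/\kappa,1)\beta(s)$ feeding a Gronwall decay for $|v|_{S(\cdot)}$, and the same use of the gap condition (via $\gamma-\rho-L_2\Gamma(1/\kappa,1)>0$ and $a<1$) to force $u(\theta)=0$ and then $v(r)=0$, contradicting the hypothesis. The only difference is cosmetic bookkeeping at the end: you chain the two bounds on $\alpha(r)$ and simplify the bracket to $(1-a)\bigl(e^{\gamma_*\xi}-e^{\rho\xi}\bigr)>0$, while the paper substitutes $|v(r)|_{S(r)}=\kappa|u(r)|_{N(r)}$ and reads off the same conclusion from the corresponding inequality at $\theta$.
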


\begin{proof}
Suppose contrary to the claim that $\zeta(r)=0$, that is, $|v(r)|_{S(r)}=\kappa |u(r)|_{N(r)}$. Subtracting the equations for $z$ and $\tilde z$ and applying the projection $Q(\theta)$,  we obtain
$$u(t)=L(t,\theta)u(\theta)-\int_{t}^{\theta}L(t,s)Q(s)[f(s,z(s))-f(s,\tilde z(s))]ds\ \ \textrm{for}\ \ t\leq\theta,\ t,\theta\in\calt.$$
Thus
$$|u(t)|_{N(t)}\leq e^{\rho(\theta-t)}|u(\theta)|_{N(\theta)}+L_1\int_{t}^{\theta}e^{\rho(s-t)}\Gamma(|u(s)|_{N(s)},|v(s)|_{S(s)})ds .$$
By assumption we obtain
\begin{equation}\label{e:ESTU1}
|u(t)|_{N(t)}\leq  e^{\rho(\theta-t)}|u(\theta)|_{N(\theta)}+L_1\Gamma\left(\frac{1}{\kappa},1\right)\int_{t}^{\theta}e^{\rho(s-t)}|v(s)|_{S(s)}ds\ \ \text{for}\ \ t\in [r,\theta].
\end{equation}
Applying the projection $I-Q(t)$ to the difference of equations for $z$ and $\tilde z$ we get
$$v(t)=L(t,r)v(r)+\int_{r}^{t}L(t,s)(I-Q(s))[f(s,z(s))-f(s,\tilde z(s))]ds\ \ \text{for}\ \ t\in [r,\theta],$$
which by assumption leads to
\begin{equation}\label{e:secondbound}
	|v(t)|_{S(t)}\leq e^{\gamma(r-t)}|v(r)|_{S(r)}+L_2\Gamma\left(\frac{1}{\kappa},1\right)\int_{r}^{t}e^{\gamma(s-t)}|v(s)|_{S(s)}ds\ \ \text{for}\ \ t\in [r,\theta].
	\end{equation}
By the Gronwall inequality we obtain from the  above  bound
\begin{equation}\label{e:ESTV1}
|v(t)|_{S(t)}\leq |v(r)|_{S(r)}e^{\left(\gamma-L_2\Gamma\left(\frac{1}{\kappa},1\right)\right)(r-t)}\ \ \text{for}\ \ t\in [r,\theta].
\end{equation}
Plugging \eqref{e:ESTV1} into \eqref{e:ESTU1} we derive
$$|u(t)|_{N(t)}\leq e^{\rho(\theta-t)}|u(\theta)|_{N(\theta)}+L_1\Gamma\left(\frac{1}{\kappa},1\right)\int_{t}^{\theta}e^{\rho(s-t)}e^{\left(\gamma-L_2\Gamma\left(\frac{1}{\kappa},1\right)\right)(r-s)}ds|v(r)|_{S(r)}\ \text{for}\ t\in[r,\theta].$$
Taking $t=r$ we get
\begin{align*}
	&|u(r)|_{N(t)}\leq e^{\rho(\theta-r)}|u(\theta)|_{N(\theta)}+L_1\Gamma\left(\frac{1}{\kappa},1\right)\int_{r}^{\theta}e^{\left(\gamma-\rho-L_2\Gamma\left(\frac{1}{\kappa},1\right)\right)(r-s)}ds|v(r)|_{S(r)}\\
	& \ \ =e^{\rho(\theta-r)}|u(\theta)|_{N(\theta)}+\frac{L_1\Gamma\left(\frac{1}{\kappa},1\right)}{\gamma-\rho-L_2\Gamma\left(\frac{1}{\kappa},1\right)}\left(1-e^{\left(\gamma-\rho-L_2\Gamma\left(\frac{1}{\kappa},1\right)\right)(r-\theta)}\right)|v(r)|_{S(r)}.
\end{align*}
Since $|v(r)|_{S(r)}=\kappa|u(r)|_{N(r)}$, we obtain, by definition of $\kappa$, 
\begin{equation}\label{eq:u_v}
\left(\frac{1}{\kappa}-\frac{L_1\Gamma\left(\frac{1}{\kappa},1\right)}{\gamma-\rho-L_2\Gamma\left(\frac{1}{\kappa},1\right)}\left(1-e^{\left(\gamma-\rho-L_2\Gamma\left(\frac{1}{\kappa},1\right)\right)(r-\theta)}\right)\right)|v(r)|_{S(r)} \leq e^{\rho(\theta-r)}|u(\theta)|_{N(\theta)}.
\end{equation}
It follows by \eqref{e:ESTV1} written for $t=\theta$ that 
\begin{equation}\label{e:ESTAA}
\left(1-\frac{L_1\kappa \Gamma\left(\frac{1}{\kappa},1\right)}{\gamma-\rho-L_2\Gamma\left(\frac{1}{\kappa},1\right)}\left(1-e^{\left(\gamma-\rho-L_2\Gamma\left(\frac{1}{\kappa},1\right)\right)(r-\theta)}\right)\right)|v(\theta)|_{S(\theta)} \leq e^{\left(\gamma-\rho-L_2\Gamma\left(\frac{1}{\kappa},1\right)\right)(r-\theta)}\kappa |u(\theta)|_{N(\theta)}.
\end{equation}
Observe that
$$
\gamma-\rho-L_2\Gamma\left(\frac{1}{\kappa},1\right) = \gamma-\rho - (\gamma-\sigma)\Gamma\left(\frac{L_1}{\sigma-\rho},\frac{L_2}{\gamma-\sigma}\right) >0,
$$
where the last inequality follows from the fact that $\rho<\sigma<\gamma$, the gap condition \eqref{e:GAPGENERAL00} and \eqref{e:DEFNKAPPA}. Moreover, the same gap condition implies that
$$
0<\frac{L_1\kappa \Gamma\left(\frac{1}{\kappa},1\right)}{\gamma-\rho-L_2\Gamma\left(\frac{1}{\kappa},1\right)} = \frac{(\sigma-\rho)\Gamma\left(\frac{L_1}{\sigma-\rho},\frac{L_2}{\gamma-\sigma}\right)}{\gamma-\rho-(\gamma-\sigma)\Gamma\left(\frac{L_1}{\sigma-\rho},\frac{L_2}{\gamma-\sigma}\right)} < 1.
$$
We deduce that the parenthesis on the left-hand side of \eqref{e:ESTAA} is positive. Therefore, the fact that $\zeta(\theta)\geq 0$, i.e., $|v(\theta)|_{S(\theta)}\geq \kappa |u(\theta)|_{N(\theta)}$, implies that
$$
\left(1-\frac{L_1\kappa \Gamma\left(\frac{1}{\kappa},1\right)}{\gamma-\rho-L_2\Gamma\left(\frac{1}{\kappa},1\right)}\left(1-e^{\left(\gamma-\rho-L_2\Gamma\left(\frac{1}{\kappa},1\right)\right)(r-\theta)}\right)\right)\kappa|u(\theta)|_{N(\theta)} \leq e^{\left(\gamma-\rho-L_2\Gamma\left(\frac{1}{\kappa},1\right)\right)(r-\theta)}\kappa |u(\theta)|_{N(\theta)}.
$$
Hence
$$
\left(1-\frac{L_1\kappa \Gamma\left(\frac{1}{\kappa},1\right)}{\gamma-\rho-L_2\Gamma\left(\frac{1}{\kappa},1\right)}\right)\left(1-e^{\left(\gamma-\rho-L_2\Gamma\left(\frac{1}{\kappa},1\right)\right)(r-\theta)}\right)\kappa|u(\theta)|_{N(\theta)} \leq 0.
$$
This means that $u(\theta)=0$, and, by \eqref{eq:u_v} $v(r)=0$, a contradiction. 
\end{proof}
Second, an analogous result concerns the situation when $\zeta(t)$ is nonpositive on a time interval. In such a case, it must be strictly negative at its end. 

\begin{lem}\label{lem:NEG}
If the gap condition \eqref{e:GAPGENERAL00} holds then for any $r<\theta$, $r,\theta\in\calt$ if only $u(\theta)\neq 0$ or $v(r)\neq 0$ then
$$\zeta(t) \leq 0 \ \ \text{ for }\ \ t\in [r,\theta]\ \ \text{ implies }\ \ \zeta(\theta)<0.$$ 
\end{lem}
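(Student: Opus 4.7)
The plan is to mirror the proof of Lemma \ref{lem:POS}, but swapping the roles of $u$ and $v$ and reversing the Gronwall direction. Argue by contradiction: since $\zeta(\theta)\leq 0$, suppose $\zeta(\theta)=0$, i.e., $|v(\theta)|_{S(\theta)}=\kappa|u(\theta)|_{N(\theta)}$. The hypothesis $\zeta(s)\leq 0$ on $[r,\theta]$ means $|v(s)|_{S(s)}\leq\kappa|u(s)|_{N(s)}$, so by monotonicity of the admissible norm
$$\Gamma\bigl(|u(s)|_{N(s)},|v(s)|_{S(s)}\bigr)\leq|u(s)|_{N(s)}\,\Gamma(1,\kappa).$$
This time, the closed integral inequality will be in $|u|$ (backward from $\theta$), not in $|v|$.

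Starting from the variation of constants formulas for $u$ and $v$ used in Lemma \ref{lem:POS}, substitute this bound into the $u$-equation to get
$$|u(t)|_{N(t)}\leq e^{\rho(\theta-t)}|u(\theta)|_{N(\theta)}+L_1\Gamma(1,\kappa)\int_t^\theta e^{\rho(s-t)}|u(s)|_{N(s)}\,ds,\quad t\in[r,\theta],$$
and apply the first Gronwall lemma (after multiplying by $e^{\rho(t-\theta)}$) to obtain
$$|u(t)|_{N(t)}\leq|u(\theta)|_{N(\theta)}\,e^{(\rho+L_1\Gamma(1,\kappa))(\theta-t)}.$$
Setting $\alpha:=\gamma-\rho-L_1\Gamma(1,\kappa)$, which is strictly positive since the gap condition gives $L_1\Gamma(1,\kappa)<\sigma-\rho<\gamma-\rho$ via homogeneity, plug this bound into the $v$-equation at $t=\theta$, using also $|v(r)|_{S(r)}\leq\kappa|u(r)|_{N(r)}\leq\kappa|u(\theta)|_{N(\theta)}e^{(\rho+L_1\Gamma(1,\kappa))(\theta-r)}$ from $\zeta(r)\leq 0$. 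The exponential factors combine cleanly to $e^{-\alpha(\theta-r)}$ and the integral evaluates explicitly, yielding
$$|v(\theta)|_{S(\theta)}\leq|u(\theta)|_{N(\theta)}\left(\kappa\, e^{-\alpha(\theta-r)}+\frac{L_2\Gamma(1,\kappa)}{\alpha}\bigl(1-e^{-\alpha(\theta-r)}\bigr)\right).$$

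Now use the assumed equality $|v(\theta)|_{S(\theta)}=\kappa|u(\theta)|_{N(\theta)}$. If $|u(\theta)|_{N(\theta)}\neq 0$, divide and use $1-e^{-\alpha(\theta-r)}>0$ (here $\theta>r$ is crucial) to obtain $\kappa\leq L_2\Gamma(1,\kappa)/\alpha$, equivalently $\kappa(\gamma-\rho)\leq(\kappa L_1+L_2)\Gamma(1,\kappa)$. Substituting the definition $\kappa=\frac{L_2(\sigma-\rho)}{L_1(\gamma-\sigma)}$ the factor $\frac{L_2(\gamma-\rho)}{\gamma-\sigma}$ cancels on both sides and the inequality collapses to $\sigma-\rho\leq L_1\Gamma(1,\kappa)$, which directly contradicts the gap condition \eqref{e:GAPGENERAL00} written via homogeneity as $L_1\Gamma(1,\kappa)<\sigma-\rho$. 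In the remaining degenerate case $u(\theta)=0$, the Gronwall bound forces $u\equiv 0$ on $[r,\theta]$, hence $|v(s)|_{S(s)}\leq\kappa|u(s)|_{N(s)}=0$ so $v\equiv 0$ on $[r,\theta]$ as well, and in particular $v(r)=0$, contradicting the assumption that $u(\theta)\neq 0$ or $v(r)\neq 0$. The main obstacle is the bookkeeping in the final algebraic step: one must verify that, after substituting the definition of $\kappa$, the inequality deduced from $\zeta(\theta)=0$ reduces to an equivalent form of the gap condition being violated — exactly as occurs symmetrically in Lemma \ref{lem:POS} with $\Gamma(1/\kappa,1)$ replacing $\Gamma(1,\kappa)$.
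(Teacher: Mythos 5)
Your proposal is correct and follows essentially the same route as the paper: the same Gronwall bound on $|u(t)|_{N(t)}$ backward from $\theta$, the same substitution into the $v$-estimate together with $|v(r)|_{S(r)}\leq\kappa|u(r)|_{N(r)}$, and the same key inequality (your contradiction $\kappa\leq L_2\Gamma(1,\kappa)/(\gamma-\rho-L_1\Gamma(1,\kappa))$ is, by homogeneity of $\Gamma$, exactly the negation of the paper's bound $L_2\Gamma(1/\kappa,1)/(\gamma-\rho-L_1\Gamma(1,\kappa))<1$). The only cosmetic differences are that you frame the last step as a contradiction with $\zeta(\theta)=0$ and dispose of the case $u(\theta)=0$ by showing $u\equiv v\equiv 0$ on $[r,\theta]$, whereas the paper argues directly that the bracket is strictly less than $\kappa$ and notes that $v(r)\neq 0$ forces $u(\theta)\neq 0$.
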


\begin{proof}
Proceeding analogously as in the proof of \eqref{e:ESTU1}, we obtain
$$|u(t)|_{N(t)}\leq e^{\rho(\theta-t)}|u(\theta)|_{N(\theta)}+L_1\Gamma(1,\kappa)\int_{t}^{\theta}e^{\rho(s-t)}|u(s)|_{N(s)}ds\ \ \text{for}\ \ t\in [r,\theta].$$
By the Gronwall inequality we get
\begin{equation}\label{e:ESTU2}
|u(t)|_{N(t)}\leq e^{(\rho+L_1\Gamma(1,\kappa))(\theta-t)}|u(\theta)|_{N(\theta)}\ \ \text{for}\ \ t\in [r,\theta].
\end{equation} 
On the other hand, analogously as in the proof of \eqref{e:secondbound}, we obtain the bound
$$|v(t)|_{S(t)}\leq e^{\gamma(r-t)}|v(r)|_{S(r)}+L_2\Gamma(1,\kappa)\int_{r}^{t}e^{\gamma(s-t)}|u(s)|_{N(s)}ds\ \ \text{for}\ \ t\in [r,\theta].$$
Combining this with \eqref{e:ESTU2}, we deduce
$$|v(t)|_{S(t)}\leq e^{\gamma(r-t)}|v(r)|_{S(r)}+L_2\Gamma(1,\kappa)\int_{r}^{t}e^{\gamma(s-t)}e^{(\rho+L_1\Gamma(1,\kappa))(\theta-s)}ds|u(\theta)|_{N(\theta)}\ \ \text{for}\ \ t\in [r,\theta],$$
which for $t=\theta$ yields
\begin{align*}
	&|v(\theta)|_{S(\theta)}\leq e^{\gamma(r-\theta)}|v(r)|_{S(r)}+L_2\Gamma(1,\kappa)\int_{r}^{\theta}e^{(\gamma-\rho-L_1\Gamma(1,\kappa))(s-\theta)}ds|u(\theta)|_{N(\theta)}\\
&\ \ \ \ =e^{\gamma(r-\theta)}|v(r)|_{S(r)}+\frac{L_2\Gamma(1,\kappa)}{\gamma-\rho-L_1\Gamma(1,\kappa)}\left(1-e^{(\gamma-\rho-L_1\Gamma(1,\kappa))(r-\theta)}\right)|u(\theta)|_{N(\theta)}.
\end{align*}
Since
\begin{equation}\label{eq:v_u_2}
|v(r)|_{S(r)}\leq\kappa |u(r)|_{N(r)}\leq \kappa e^{(\rho+L_1\Gamma(1,\kappa))(\theta-r)}|u(\theta)|_{N(\theta)},    
\end{equation}
it follows that
$$|v(\theta)|_{S(\theta)}\leq\Bigl( e^{(\gamma-\rho-L_1\Gamma(1,\kappa))(r-\theta)}+\frac{L_2\Gamma(1,\kappa)}{\gamma-\rho-L_1\Gamma(1,\kappa)}(1-e^{(\gamma-\rho-L_1\Gamma(1,\kappa))(r-\theta)})\Bigr)\kappa|u(\theta)|_{N(\theta)}.$$
Analogously as in the proof of Lemma \ref{lem:POS} we have by \eqref{e:GAPGENERAL00} and \eqref{e:DEFNKAPPA}
$$
\gamma-\rho - L_1\Gamma(1,\kappa) = \gamma-\rho-(\sigma-\rho)\Gamma\left(\frac{L_1}{\sigma-\rho},\frac{L_2}{\gamma-\sigma}\right) > 0,
$$
and
$$
0< \frac{L_2\Gamma(\frac{1}{\kappa},1)}{\gamma-\rho-L_1\Gamma(1,\kappa)}  = \frac{(\gamma-\sigma)\Gamma\left(\frac{L_1}{\sigma-\rho},\frac{L_2}{\gamma-\sigma}\right)}{\gamma-\rho-(\sigma-\rho)\Gamma\left(\frac{L_1}{\sigma-\rho},\frac{L_2}{\gamma-\sigma}\right)} < 1.
$$
Now if $v(r)\neq 0$ then by \eqref{eq:v_u_2} we must also have $u(\theta)\neq0$.
Hence, since $u(\theta)\neq 0$, we get $|v(\theta)|_{S(\theta)}<\kappa |u(\theta)|_{N(\theta)}$, which proves the claim. 
\end{proof}

 We use the above two lemmas to obtain the following results.
\begin{prop}\label{prop:zeta}
	Assume the gap condition \eqref{e:GAPGENERAL00} and let the solutions $z, \tilde{z}$ of \eqref{e:VCF} on  $\calt=(-\infty,T_2]$  be such that 
 $Q(T_2)z(T_2)\neq Q(T_2)\tilde z(T_2)$.
    Then the function $\zeta\colon\calt\to \R$ has at most one zero in $\calt$. If $s$ is the unique zero of $\zeta$ in $\calt$, then we must have $\zeta(r)>0$ for $r<s$, and $\zeta(r)<0$ for $r\in (s,T_2]$.
\end{prop}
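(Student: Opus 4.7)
The argument combines Lemmas~\ref{lem:POS} and~\ref{lem:NEG} with a continuity--supremum bootstrap, after first securing a forward-uniqueness reduction which guarantees that the nondegeneracy hypotheses of those lemmas are available at any zero of $\zeta$.

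\emph{Step 1 (Nondegeneracy at a zero of $\zeta$).} First I observe that since both $z$ and $\tilde z$ satisfy \eqref{e:VCF} with the same Lipschitz nonlinearity $f$, forward uniqueness of the process excludes $z(\theta)=\tilde z(\theta)$ for any $\theta\in\calt$: otherwise $z(T_2)=T(T_2,\theta)z(\theta)=T(T_2,\theta)\tilde z(\theta)=\tilde z(T_2)$, contradicting $u(T_2)\neq 0$. Hence $u(\theta)\neq 0$ or $v(\theta)\neq 0$ at every $\theta\in\calt$. In particular, if $\zeta(s)=0$, i.e.\ $|v(s)|_{S(s)}=\kappa|u(s)|_{N(s)}$, then $u(s)$ and $v(s)$ vanish simultaneously or not at all; the previous observation forces $u(s)\neq 0$ \emph{and} $v(s)\neq 0$.

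\emph{Step 2 ($\zeta>0$ to the left of a zero).} Assume $\zeta(s)=0$ and fix $r<s$ in $\calt$. Set
\[
M=\inf\bigl\{t\in[r,s]:\zeta\geq 0\ \text{on}\ [t,s]\bigr\}.
\]
The set is nonempty (it contains $s$), and continuity of $\zeta$ shows that the infimum is attained, so $\zeta\geq 0$ on $[M,s]$. If $M=r$, Lemma~\ref{lem:POS} applied on $[r,s]$---legitimate because $u(s)\neq 0$ by Step~1---gives $\zeta(r)>0$. If instead $M>r$, then by definition of $M$ there is a sequence $t_n\to M^-$ with $\zeta(t_n)<0$; continuity forces $\zeta(M)=0$, while Lemma~\ref{lem:POS} applied on $[M,s]$ (still using $u(s)\neq 0$) yields $\zeta(M)>0$, a contradiction. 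Thus $M=r$ and $\zeta(r)>0$.

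\emph{Step 3 ($\zeta<0$ to the right of a zero) and uniqueness.} For $r\in(s,T_2]$ the argument is symmetric: set
\[
N=\sup\bigl\{t\in[s,r]:\zeta\leq 0\ \text{on}\ [s,t]\bigr\},
\]
observe that $N$ is attained, rule out $N<r$ in the same fashion by applying Lemma~\ref{lem:NEG} on $[s,N]$ (legitimate because $v(s)\neq 0$), and then invoke Lemma~\ref{lem:NEG} on $[s,r]$ to conclude $\zeta(r)<0$. Steps~2 and~3 together show that any zero $s$ of $\zeta$ is flanked by strict positivity on the left and strict negativity on the right; in particular, a second zero is impossible, which gives both the uniqueness and the sign claim.

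The main delicate point is the supremum/infimum bootstrap in Steps~2 and~3, where one must (i) argue that the extremum is attained via the continuity of $\zeta$, and (ii) at each invocation of Lemma~\ref{lem:POS} or \ref{lem:NEG} verify the correct endpoint nondegeneracy ($u\neq 0$ at the right endpoint for Lemma~\ref{lem:POS}, $v\neq 0$ at the left endpoint for Lemma~\ref{lem:NEG}), which is exactly what Step~1 secures.
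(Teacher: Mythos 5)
Your Step 1 coincides with the paper's opening observation (forward uniqueness forces $u(\theta)\neq 0$, and in fact $u,v\neq 0$, at every zero of $\zeta$), but the sign bootstrap in Steps 2--3 has a genuine gap. In Step 2 the case $M>r$ is disposed of by applying Lemma~\ref{lem:POS} on $[M,s]$, which tacitly assumes $M<s$; nothing in your argument excludes $M=s$. This is not an exotic edge case: it occurs, for instance, whenever $\zeta<0$ on some interval $(a,s)$ with $\zeta(s)=0$, in which case your set equals $\{s\}$, so $M=s$, the sequence $t_n\to M^-$ with $\zeta(t_n)<0$ exists but yields nothing, and Lemma~\ref{lem:POS} cannot be invoked on the degenerate interval $[s,s]$ (its conclusion would even be false there, since $\zeta(s)=0$). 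The deeper issue is that Step 2 uses only Lemma~\ref{lem:POS}, whereas the configuration ``$\zeta$ negative immediately to the left of the zero $s$'' can only be excluded by Lemma~\ref{lem:NEG} (applied on $[a',s]$ with $a<a'<s$, using $u(s)\neq 0$, to force $\zeta(s)<0$). Symmetrically, Step 3 uses only Lemma~\ref{lem:NEG} and breaks down in the unaddressed case $N=s$, i.e.\ when $\zeta$ is positive immediately to the right of $s$, which requires Lemma~\ref{lem:POS}.

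The gap is fixable. For the left side: if $\zeta(\theta_1)<0$ for some $\theta_1<s$, let $b$ be the first zero of $\zeta$ in $(\theta_1,s]$ (it exists, is attained by continuity, and $\zeta\leq 0$ on $[\theta_1,b]$ by the intermediate value theorem); Step 1 gives $u(b)\neq 0$, so Lemma~\ref{lem:NEG} yields $\zeta(b)<0$, a contradiction. Hence $\zeta\geq 0$ on $(-\infty,s]$, and then your Lemma~\ref{lem:POS} application on $[r,s]$ gives strict positivity; the right side is symmetric with the roles of the two lemmas exchanged. This is essentially how the paper proceeds, except in the opposite order: it first proves that $\zeta$ has at most one zero by taking a maximal interval of constant sign between two hypothetical zeros (whose endpoints are then zeros, so the interval is nondegenerate and the endpoint nondegeneracy is automatic), and only afterwards rules out the wrong constant sign on each side of the unique zero via Lemma~\ref{lem:NEG} on the left and Lemma~\ref{lem:POS} on the right. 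As written, your proposal cannot be accepted without repairing the $M=s$ and $N=s$ cases.
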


\begin{proof}
	If $s\in \calt$ is a zero of $\zeta$, then we must have $u(s)\neq 0$, because otherwise we would have $u(s) = v(s) = 0$, which means that $z(s) = \tilde z(s)$ and, by uniqueness, $z(T_2) = \tilde z(T_2)$. For contradiction, suppose that $\zeta$ has at least two zeros in $\calt$. Denote them by $s_1, s_2$. If $\zeta(s) = 0$ for every $s\in [s_1, s_2]$, we have contradiction with Lemma \ref{lem:POS}. 
	If $\zeta(s_0)>0$ for some $s_0\in [s_1,s_2]$, then let $[r_1,r_2]\subset [s_1,s_2]$ be a maximal interval containing $s_0$ such that $\zeta(r)>0$ for every $r\in (r_1,r_2)$. We must have $\zeta(r_1) = \zeta(r_2)=0$ and we have a contradiction with Lemma \ref{lem:POS}. On the other hand,  if 	$\zeta(s_0)<0$ for some $s_0\in [s_1,s_2]$, then let $[r_1,r_2]\subset [s_1,s_2]$ be a maximal interval containing $s_0$ such that $\zeta(r)<0$ on $(r_1,r_2)$. We must have $\zeta(r_1)=\zeta(r_2) = 0$ and we have a contradiction with Lemma \ref{lem:NEG}. This means that if $\zeta(s) = 0$ the function $\zeta$ cannot change sign on $(-\infty,s)$ and on $(s,T_2]$.  If $\zeta(r)<0$ for $r<s$, using the fact that $u(s)\neq 0$,  by Lemma \ref{lem:NEG} we must have $\zeta(s)<0$, a~contradiction. On the other hand, if $\zeta(r)>0$ on $(s,T_2]$, since $u(T_2)\neq 0$ we can use Lemma \ref{lem:POS}, which implies that $\zeta(s)>0$, again a contradiction. 
\end{proof}
The next result will be useful later in Section \ref{sec:stable}. Note that if we assumed the backward uniqueness of the solution, the proof would very simply follow the lines of the previous proposition. However, since we are not assuming backward uniqueness, we need some additional effort in the proof.  
\begin{prop}\label{prop:zeta_2}
	Assume the gap condition \eqref{e:GAPGENERAL00} and let the two solutions $z, \tilde{z}$ of \eqref{e:VCF} be defined on $\calt=[T_1,\infty)$ and satisfy $(I-Q(T_1))z(T_1)\neq (I-Q(T_1))\tilde z(T_1)$.
    Then, for the function $\zeta\colon\calt\to\R$ defined in \eqref{e:DEFNZETA}, exactly one of the three cases holds:
    \begin{itemize}
        \item[(i)] the function $\zeta$ has no zeros in $\calt$,
        \item[(ii)] there exists $s\geq T_1$ such that  $\zeta(r)>0$ for $r\in [T_1,s)$, $\zeta(s)=0$, and $\zeta(r)<0$ for $r\in (s,\infty)$,
        \item[(iii)] there exists $s>T_1$ such that $\zeta(r)>0$ for $r\in [T_1,s)$ and $z(r)=\tilde z(r)$ for $r\in [s,\infty)$. 
    \end{itemize} 
\end{prop}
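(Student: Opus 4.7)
The plan is to track the continuous function $\zeta$ on $[T_1,\infty)$ from its first zero onward, combining Lemmas~\ref{lem:POS} and~\ref{lem:NEG} with forward uniqueness of the evolution process to rule out unwanted configurations. The data we start from is $v(T_1)\neq 0$, so in particular $z(T_1)\neq\tilde z(T_1)$.

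First I would set $s=\inf\{t\in[T_1,\infty)\colon \zeta(t)=0\}$. If this set is empty, case (i) holds. Otherwise, by continuity $\zeta(s)=0$ and $\zeta(r)\neq 0$ for $r\in[T_1,s)$. I would then show that $\zeta(r)>0$ on $[T_1,s)$: either $s=T_1$ and there is nothing to prove, or $\zeta$ has constant sign on $[T_1,s)$; in the negative case $\zeta\leq 0$ on $[T_1,s]$, and since $v(T_1)\neq 0$, Lemma~\ref{lem:NEG} applied with $r=T_1$, $\theta=s$ would force $\zeta(s)<0$, contradicting $\zeta(s)=0$.

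At $t=s$ I would split on whether $z(s)=\tilde z(s)$. If equality holds, forward uniqueness of $\{T(t,\tau)\colon t\geq\tau\}$ yields $z=\tilde z$ on $[s,\infty)$; moreover $s>T_1$, since $s=T_1$ would give $v(T_1)=0$ against the hypothesis. This places us in case~(iii). Otherwise $z(s)\neq\tilde z(s)$, and the identity $|v(s)|_{S(s)}=\kappa|u(s)|_{N(s)}$ coming from $\zeta(s)=0$ forces both $u(s)\neq 0$ and $v(s)\neq 0$; it then remains to prove $\zeta<0$ strictly on $(s,\infty)$ to land in case~(ii).

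The hard part will be ruling out, in this last subcase, any $t'>s$ with $\zeta(t')\geq 0$, precisely because backward uniqueness is not available and Lemmas~\ref{lem:POS} and~\ref{lem:NEG} require a nonvanishing endpoint condition. The plan is to argue by contradiction in two sub-subcases. If $\zeta(t')>0$, I would set $s_1=\sup\{t\in[s,t']\colon\zeta(t)\leq 0\}$; continuity places $s_1\in[s,t')$ with $\zeta(s_1)=0$ and $\zeta>0$ on $(s_1,t']$. If $v(s_1)=0$, then $\zeta(s_1)=0$ forces $u(s_1)=0$, hence $z(s_1)=\tilde z(s_1)$, and forward uniqueness yields $z\equiv\tilde z$ on $[s_1,\infty)$, contradicting $\zeta(t')>0$. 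So $v(s_1)\neq 0$, and Lemma~\ref{lem:POS} on $[s_1,t']$ delivers $\zeta(s_1)>0$, a contradiction. If instead $\zeta(t')=0$, the previous step shows $\zeta\leq 0$ on $[s,t']$, and since $v(s)\neq 0$ Lemma~\ref{lem:NEG} on $[s,t']$ yields $\zeta(t')<0$, again a contradiction. Finally the three cases are mutually exclusive, because (i) has no zero, (iii) gives $\zeta\equiv 0$ on $[s,\infty)$ through $z=\tilde z$, and (ii) gives $\zeta<0$ strictly on $(s,\infty)$.
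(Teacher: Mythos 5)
Your proposal is correct and rests on essentially the same ingredients as the paper's proof: Lemmas~\ref{lem:POS} and~\ref{lem:NEG} together with forward uniqueness of the process, with the absence of backward uniqueness handled by checking, at each relevant zero of $\zeta$, whether $u$ or $v$ vanishes there (in which case the solutions merge and stay merged). The only difference is organizational: you work from the first zero $s$ of $\zeta$ and a last-crossing point $s_1$ in $[s,t']$, whereas the paper organizes the same case analysis around the set $\calt_1$ of zeros at which the two solutions coincide, observing it is either empty or a terminal half-line; your write-up in fact makes explicit the step the paper only sketches as ``an argument following the previous lemma.''
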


\begin{proof}
	Consider $\calt_1\subset \calt$, the set of  zeros of $\zeta$ such that $u(s)= 0$ or $v(s)=0$ for $s\in \calt_1$. If this set is nonempty, then for $s\in \calt_1$ we must have have both $u(s)=0$ and $v(s)=0$, and, in consequence, $z(s) = \tilde z(s)$.  In such case, by uniqueness, $z(r) = \tilde z(r)$ for every $r\geq s$. This means that either  $\calt_1$ is empty or $\calt_1 = [\overline s,\infty)$ for certain $\overline s\geq T_1$ and $z$ coincides with $\tilde z$ on $\calt_1$. Assume first that $\calt_1$ is empty. An argument that follows the lines of the proof of the previous lemma shows that either assertion (i) or assertion (ii) must hold.  If $\calt_1$ is nonempty also $\calt\setminus\calt_1$ must be nonempty, since $v(T_1)\neq 0$. Thus $\overline{s}>T_1$. Suppose that there exists $\overline{r}\in \calt\setminus \calt_1$ such that $\zeta(\overline{r})=0$. If $\zeta(r)=0$ for every $r\in [\overline{r},\overline{s}]$, we have a~contradiction with Lemma \ref{lem:POS}. If $\zeta(s_0)<0$ for some $s_0\in [\overline{r},\overline{s}]$ then, taking the maximal interval $[s_1,s_2]$ containing $s_0$ such that $\zeta(s)<0$ for $s\in (s_1,s_2)$ we must have $v(s_1)\neq 0$ and $\zeta(s_2)=0$ which gives a contradiction by Lemma \ref{lem:NEG}. On the the other hand, if $\zeta(s_0)>0$ for some $s_0\in [\overline{r},\overline{s}]$ then, taking the maximal interval $[s_1,s_2]$ containing $s_0$ such that $\zeta(s)>0$ for $s\in (s_1,s_2)$ we must  have $v(s_1)\neq 0$ and $\zeta(s_1)=0$, a contradiction with Lemma \ref{lem:POS}. This means that $\zeta$ is nonzero and does not change sign on $[T_1,\overline{s})$. If $\zeta(r)<0$ on  $[T_1,\overline{s}),$ then, as $v(T_1)\neq 0$ and $\zeta(\overline{s})=0$ we get a~contradiction with Lemma \ref{lem:NEG}. This implies that we must have the assertion (iii). 
\end{proof}

Previous results were valid for any two solutions of the considered evolution process. We come back to the manifold that we have constructed. In fact, we are now in a position to get the Lipschitzness of $\Sigma$. To this end, let $\eta,\tilde\eta\in X$ and $\tau\in\R$. We denote $z=z_{\tau,\eta}$, $\tilde{z}=z_{\tau,\tilde\eta}$ on $\calt=(-\infty,\tau]$, the unique fixed points in $E_{\sigma,\tau}$ of $H_{\tau,\eta}$ and $H_{\tau,\tilde\eta}$, respectively. Below we show that $\zeta$ defined in \eqref{e:DEFNZETA} must be negative in $(-\infty,\tau)$.

\begin{cor}\label{cor:ALWAYSNEGATIVE}
If the gap condition \eqref{e:GAPGENERAL00} holds and $Q(\tau)\eta\neq Q(\tau)\tilde\eta$ then 
$\zeta(t)<0$ for all $t<\tau$.
\end{cor}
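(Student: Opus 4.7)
The plan is to combine the trichotomy from Proposition~\ref{prop:zeta} with the decay built into the space $E_{\sigma,\tau}$. Since $u(\tau) = Q(\tau)(\eta-\tilde\eta)\neq 0$, Proposition~\ref{prop:zeta} applies with $T_2=\tau$: $\zeta$ has at most one zero in $(-\infty,\tau]$, and if a zero $s$ exists then $\zeta>0$ on $(-\infty,s)$ and $\zeta<0$ on $(s,\tau]$. By continuity, the alternatives are therefore: (a) $\zeta<0$ on all of $(-\infty,\tau]$, which already gives the claim; (b) $\zeta>0$ on all of $(-\infty,\tau]$; or (c) $\zeta$ has a unique zero $s\in(-\infty,\tau]$, in which case $\zeta>0$ on $(-\infty,s)$. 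In both (b) and (c), there exists $s_0\in(-\infty,\tau]$ such that $\zeta>0$ on $(-\infty,s_0)$, and the task is to rule this out.

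To obtain a contradiction, fix any $t_0<s_0$ and, for arbitrary $r<t_0$, invoke the estimate established inside the proof of Lemma~\ref{lem:POS}: since $\zeta\geq 0$ on $[r,t_0]$, inequality \eqref{e:ESTV1} yields
$$
|v(t_0)|_{S(t_0)}\leq |v(r)|_{S(r)}\,e^{\left(\gamma-L_2\Gamma\left(\tfrac{1}{\kappa},1\right)\right)(r-t_0)},
$$
equivalently
$$
|v(r)|_{S(r)}\geq |v(t_0)|_{S(t_0)}\,e^{\left(\gamma-L_2\Gamma\left(\tfrac{1}{\kappa},1\right)\right)(t_0-r)}.
$$
Here $|v(t_0)|_{S(t_0)}>\kappa|u(t_0)|_{N(t_0)}\geq 0$ because $\zeta(t_0)>0$, so the lower bound is nontrivial, and it forces $|v(r)|_{S(r)}$ to grow as $r\to-\infty$ at the exponential rate $\gamma-L_2\Gamma(1/\kappa,1)$.

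The contradiction then comes from membership in $E_{\sigma,\tau}$: combining $\norm{z(r)-\tilde z(r)}_r=\Gamma(|u(r)|_{N(r)},|v(r)|_{S(r)})\geq \Gamma(0,1)|v(r)|_{S(r)}$ with $\norm{z}_{E_{\sigma,\tau}},\norm{\tilde z}_{E_{\sigma,\tau}}<\infty$ gives a constant $C>0$ with
$$
|v(r)|_{S(r)}\leq C\,e^{\sigma(\tau-r)}\ \text{for every}\ r\leq \tau.
$$
The main point, and the only place where the gap condition is really used, is to verify $\gamma-L_2\Gamma(1/\kappa,1)>\sigma$. By the homogeneity of the norm $\Gamma$ and the definition \eqref{e:DEFNKAPPA} of $\kappa$, one computes
$$
L_2\Gamma\!\left(\tfrac{1}{\kappa},1\right)=(\gamma-\sigma)\,\Gamma\!\left(\tfrac{L_1}{\sigma-\rho},\tfrac{L_2}{\gamma-\sigma}\right)<\gamma-\sigma
$$
by the gap condition \eqref{e:GAPGENERAL00}, so $\gamma-L_2\Gamma(1/\kappa,1)>\sigma$. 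Since the lower bound on $|v(r)|_{S(r)}$ then grows strictly faster than the upper bound $Ce^{\sigma(\tau-r)}$ as $r\to-\infty$, we reach a contradiction. Consequently cases (b) and (c) are excluded, leaving only $\zeta<0$ throughout $(-\infty,\tau]$, which in particular yields the stated strict inequality on $(-\infty,\tau)$.
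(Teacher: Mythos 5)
Your proposal is correct and follows essentially the same route as the paper: reduce to the trichotomy of Proposition~\ref{prop:zeta}, then rule out any interval $(-\infty,s_0)$ on which $\zeta>0$ by combining the Gronwall estimate \eqref{e:ESTV1} for $|v|_{S(\cdot)}$ with membership in $E_{\sigma,\tau}$ and the gap condition, which gives $\gamma-L_2\Gamma(1/\kappa,1)>\sigma$. The only cosmetic difference is that you phrase the contradiction as incompatible growth rates of the lower and upper bounds on $|v(r)|_{S(r)}$ as $r\to-\infty$, whereas the paper lets $r\to-\infty$ to conclude $|v(t)|_{S(t)}=0$ and then $|u(t)|_{N(t)}<0$; these are the same argument.
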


\begin{proof}
The function $\zeta$ is a real-valued continuous function on $(-\infty,\tau]$. In particular, by Proposition~\ref{prop:zeta} exactly one of the following situations takes place:
\begin{itemize}
\item[(i)] function $\zeta$ is negative in $(-\infty,\tau)$,
\item[(ii)] function $\zeta$ is positive in $(-\infty,\tau)$,
\item[(iii)] function $\zeta$ has a unique zero $t_0$ in $(-\infty,\tau)$ and
then $\zeta(t)>0$ for $t<t_0$ and $\zeta(t)<0$ for $t_0<t<\tau$. 
\end{itemize} 
We will show that (ii) and (iii) lead to a contradiction with the choice of $\sigma$. In both these cases there exists $t_1<\tau$ such that $\zeta(r)>0$ for $r\leq t_1$. We have
$$v(t)=L(t,r)v(r)+\int_{r}^{t}L(t,s)(I-Q(s))[f(s,z(s))-f(s,\tilde z(s))]ds\ \ \text{for}\ \ -\infty<r\leq t\leq t_1,$$
and
\begin{align*}
|v(t)|_{S(t)}&\leq e^{\gamma(r-t)}|v(r)|_{S(r)}+L_2\int_{r}^{t}e^{\gamma(s-t)}\Gamma(|u(s)|_{N(s)},|v(s)|_{S(s)})ds\\
&\leq e^{\gamma(r-t)}|v(r)|_{S(r)}+L_2\Gamma\left(\frac{1}{\kappa},1\right)\int_{r}^{t}e^{\gamma(s-t)}|v(s)|_{S(s)}ds\ \ \text{for}\ \ r\leq t\leq t_1.
\end{align*}
By the Gronwall inequality we get
$$|v(t)|_{S(t)}\leq |v(r)|_{S(r)}e^{\left(\gamma-L_2\Gamma\left(\frac{1}{\kappa},1\right)\right)(r-t)}\ \ \text{for}\ \ r\leq t\leq t_1.$$
We have
\begin{align*}
	&|v(t)|_{S(t)}\leq |v(r)|_{S(r)}e^{\left(\gamma-L_2\Gamma\left(\frac{1}{\kappa},1\right)\right)(r-t)}=\frac{1}{\Gamma(0,1)}\Gamma(0,|v(r)|_{S(r)})e^{\left(\gamma-L_2\Gamma\left(\frac{1}{\kappa},1\right)\right)(r-t)}\\
& \ \ \leq\frac{1}{\Gamma(0,1)}\norm{z(r)-\tilde z(r)}_{r}e^{\left(\gamma-L_2\Gamma\left(\frac{1}{\kappa},1\right)\right)(r-t)}\leq \frac{1}{\Gamma(0,1)}\norm{z-\tilde z}_{E_{\sigma,t}}e^{\left(\gamma-\sigma-L_2\Gamma\left(\frac{1}{\kappa},1\right)\right)(r-t)},\ r\leq t\leq t_1.
\end{align*}
Passing with $r\to-\infty$ we get $|v(t)|_{S(t)}=0$, since $\gamma-\sigma-L_2\Gamma\left(\frac{1}{\kappa},1\right)>0$.
This implies that $|u(t)|_{N(t)}<\frac{1}{\kappa}|v(t)|_{S(t)}=0$, which is not possible. 
\end{proof}

Thus we always have $\zeta(t)\leq 0$ for $t\in (-\infty,\tau]$. We deduce the following theorem.
\begin{thm}
	The function $\Sigma$ that defined the manifold $\{\calm(t)\colon t\in \R\}$ is Lipschitz continuous, 
		\begin{equation}\label{e:SIGMALIP}
			|\Sigma(\tau,\eta)-\Sigma(\tau,\tilde\eta)|_{S(\tau)}\leq\kappa|Q(\tau)(\eta-\tilde\eta)|_{N(\tau)}\ \ \text{for}\ \ \eta,\tilde\eta\in X,\ \tau\in\R,
		\end{equation}	
	with the Lipschitz constant $\kappa = \frac{L_2}{L_1}\frac{\sigma-\rho}{\gamma-\sigma}$.
\end{thm}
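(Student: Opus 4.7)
The plan is to read off the Lipschitz bound directly from Corollary \ref{cor:ALWAYSNEGATIVE}, which already contains essentially all the work. Fix $\tau\in\R$ and $\eta,\tilde\eta\in X$, and let $z = z_{\tau,\eta}$, $\tilde z = z_{\tau,\tilde\eta}$ be the unique fixed points in $E_{\sigma,\tau}$ of $H_{\tau,\eta}$ and $H_{\tau,\tilde\eta}$. Recall the identifications $Q(\tau) z(\tau) = Q(\tau)\eta$ and $(I-Q(\tau))z(\tau) = \Sigma(\tau,\eta)$, and analogously for $\tilde z$. Consequently, with $u(t)=Q(t)(z(t)-\tilde z(t))$ and $v(t)=(I-Q(t))(z(t)-\tilde z(t))$, we have at $t=\tau$
\[
u(\tau) = Q(\tau)(\eta-\tilde\eta),\qquad v(\tau) = \Sigma(\tau,\eta)-\Sigma(\tau,\tilde\eta).
\]
Thus proving \eqref{e:SIGMALIP} amounts to showing $\zeta(\tau)\leq 0$, where $\zeta$ is as in \eqref{e:DEFNZETA}.

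I would split into two cases. First, if $Q(\tau)\eta = Q(\tau)\tilde\eta$, then by the identity $\Sigma(\tau,\cdot)=\Sigma(\tau,Q(\tau)\cdot)$ established just after \eqref{e:DEFNSIGMA} we have $\Sigma(\tau,\eta)=\Sigma(\tau,\tilde\eta)$, so both sides of \eqref{e:SIGMALIP} vanish. Second, if $Q(\tau)\eta \neq Q(\tau)\tilde\eta$, then Corollary \ref{cor:ALWAYSNEGATIVE} applies and yields $\zeta(t)<0$ for all $t<\tau$. Since the maps $t\mapsto|u(t)|_{N(t)}$ and $t\mapsto|v(t)|_{S(t)}$ are continuous on $(-\infty,\tau]$ (as compositions of continuous functions with the continuous norms $|\cdot|_{N(t)}$, $|\cdot|_{S(t)}$), the function $\zeta$ is continuous at $\tau$, and passing to the limit as $t\to\tau^-$ gives $\zeta(\tau)\leq 0$, that is,
\[
|\Sigma(\tau,\eta)-\Sigma(\tau,\tilde\eta)|_{S(\tau)} = |v(\tau)|_{S(\tau)} \leq \kappa\,|u(\tau)|_{N(\tau)} = \kappa\,|Q(\tau)(\eta-\tilde\eta)|_{N(\tau)},
\]
which is precisely \eqref{e:SIGMALIP} with Lipschitz constant $\kappa = \frac{L_2}{L_1}\frac{\sigma-\rho}{\gamma-\sigma}$.

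There is essentially no obstacle here: the heavy lifting is done in Lemmas \ref{lem:POS} and \ref{lem:NEG} and Proposition \ref{prop:zeta}, which rule out the sign configurations of $\zeta$ that would be incompatible with $z,\tilde z\in E_{\sigma,\tau}$; Corollary \ref{cor:ALWAYSNEGATIVE} then pins down the sign on $(-\infty,\tau)$, and the present theorem is the continuity-at-$\tau$ corollary together with the trivial case. The only subtlety worth noting is that one must verify that the strict inequality on $(-\infty,\tau)$ does in fact pass to a weak inequality at the endpoint; this is immediate from continuity of $\zeta$ and does not require any extra gap or regularity argument.
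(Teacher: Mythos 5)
Your proposal is correct and follows essentially the same route as the paper: the trivial case $Q(\tau)\eta=Q(\tau)\tilde\eta$ is handled by $\Sigma(\tau,\cdot)=\Sigma(\tau,Q(\tau)\cdot)$, and otherwise Corollary~\ref{cor:ALWAYSNEGATIVE} gives $\zeta(t)<0$ on $(-\infty,\tau)$, whence $\zeta(\tau)\leq 0$ and the bound \eqref{e:SIGMALIP}. The only difference is cosmetic: you spell out the continuity-at-$\tau$ limit step that the paper leaves implicit in the sentence ``Thus we always have $\zeta(t)\leq 0$ for $t\in(-\infty,\tau]$.''
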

\begin{proof}
Since the inequality \eqref{e:SIGMALIP} holds trivially when $Q(\tau)\eta=Q(\tau)\tilde\eta$, we consider $Q(\tau)\eta\neq Q(\tau)\tilde\eta$. 	Since $\zeta(\tau) \leq 0$, it follows that $|v(\tau)|_{S(\tau)}\leq \kappa |u(\tau)|_{N(\tau)}$. This further means that 
	$$|(I-Q(\tau))(z(\tau)-\tilde z(\tau)|_{S(\tau)}\leq \kappa |Q(\tau)(z(\tau)-\tilde z(\tau))|_{N(\tau)},$$ 
which directly implies the bound \eqref{e:SIGMALIP}. 
\end{proof}
As it is seen in the next result, the Lipschitz constant of the inertial manifold can be proved to be lower than it follows from the previous theorem. 
\begin{thm}\label{thm:betterlip}
The function $\Sigma$ defining the manifold $\{ \calm(t)\colon t\in \R\}$ is Lipschitz continuous,
\begin{equation*}
|\Sigma(\tau,\eta)-\Sigma(\tau,\tilde\eta)|_{S(\tau)}\leq\kappa_1|Q(\tau)(\eta-\tilde\eta)|_{N(\tau)},\ \eta,\tilde\eta\in X,\ \tau\in\R,
\end{equation*}
with the Lipschitz constant
\begin{equation*}
\kappa_1=\frac{L_2\Gamma(1,\kappa)}{\gamma-\rho-L_1\Gamma(1,\kappa)}<\kappa.
\end{equation*}
\end{thm}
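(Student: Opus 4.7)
The plan is to bootstrap off Corollary \ref{cor:ALWAYSNEGATIVE}, which guarantees that for the fixed points $z=z_{\tau,\eta}$ and $\tilde z=z_{\tau,\tilde\eta}$ with $Q(\tau)\eta\neq Q(\tau)\tilde\eta$ the cone inequality $|v(s)|_{S(s)}\leq \kappa\,|u(s)|_{N(s)}$ now holds for \emph{every} $s\in(-\infty,\tau]$, not merely at the endpoint. Combined with the monotonicity and $1$-homogeneity of $\Gamma$, this yields the cleaner majorization
$$\Gamma(|u(s)|_{N(s)},|v(s)|_{S(s)})\leq \Gamma(1,\kappa)\,|u(s)|_{N(s)}\ \text{for all}\ s\leq\tau,$$
which is the engine of the improvement. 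The case $Q(\tau)\eta=Q(\tau)\tilde\eta$ is trivial from the previous theorem, since then $|\Sigma(\tau,\eta)-\Sigma(\tau,\tilde\eta)|_{S(\tau)}\leq\kappa\cdot 0=0$.

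First, I would apply this majorization to the $N$-projection of the difference of the fixed-point identities, repeating the calculation of Lemma \ref{lem:NEG} but now on the entire half-line $(-\infty,\tau]$:
$$|u(t)|_{N(t)}\leq e^{\rho(\tau-t)}|u(\tau)|_{N(\tau)}+L_1\Gamma(1,\kappa)\int_t^\tau e^{\rho(s-t)}|u(s)|_{N(s)}\,ds,\ t\leq\tau.$$
The Gronwall inequality then delivers the global pointwise bound
$$|u(t)|_{N(t)}\leq e^{(\rho+L_1\Gamma(1,\kappa))(\tau-t)}|u(\tau)|_{N(\tau)}\ \text{for all}\ t\leq\tau,$$
which is the analog of \eqref{e:ESTU2} extended all the way to $-\infty$.

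Next, I would substitute this bound into the integral representation of $v(\tau)=\Sigma(\tau,\eta)-\Sigma(\tau,\tilde\eta)$ coming from the fixed-point equation,
$$v(\tau)=\int_{-\infty}^\tau L(\tau,s)(I-Q(s))[f(s,z(s))-f(s,\tilde z(s))]\,ds.$$
Measuring in $|\cdot|_{S(\tau)}$, using \eqref{e:QTST} (with the renormed constant $1$) and \eqref{e:L2} together with the cone-based $\Gamma$-estimate, and inserting the pointwise bound on $|u|_N$, one obtains
$$|v(\tau)|_{S(\tau)}\leq L_2\Gamma(1,\kappa)\,|u(\tau)|_{N(\tau)}\int_{-\infty}^\tau e^{(\gamma-\rho-L_1\Gamma(1,\kappa))(s-\tau)}\,ds=\frac{L_2\Gamma(1,\kappa)}{\gamma-\rho-L_1\Gamma(1,\kappa)}\,|u(\tau)|_{N(\tau)},$$
which is precisely the claimed constant $\kappa_1$.

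The only items remaining to verify are the positivity of the denominator (so that the improper integral converges) and the strict inequality $\kappa_1<\kappa$. The former was already noted inside the proof of Lemma \ref{lem:NEG}: the gap condition forces $\gamma-\rho>L_1\Gamma(1,\kappa)$. For the latter, clearing denominators reduces $\kappa_1<\kappa$ to $(L_1\kappa+L_2)\Gamma(1,\kappa)<\kappa(\gamma-\rho)$; using the identity $L_1\kappa+L_2=L_2(\gamma-\rho)/(\gamma-\sigma)$ immediate from \eqref{e:DEFNKAPPA}, this collapses to $(L_1/(\sigma-\rho))\Gamma(1,\kappa)<1$, which by the $1$-homogeneity of $\Gamma$ is exactly the assumed gap condition \eqref{e:GAPGENERAL00}. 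No new ideas are needed beyond the global cone bound provided by Corollary \ref{cor:ALWAYSNEGATIVE}; the main subtlety is the algebraic bookkeeping that puts $\kappa_1<\kappa$ in direct correspondence with \eqref{e:GAPGENERAL00}.
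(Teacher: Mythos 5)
Your proposal is correct and follows essentially the same route as the paper's proof: the cone bound $|v(s)|_{S(s)}\leq\kappa|u(s)|_{N(s)}$ on all of $(-\infty,\tau]$ (the paper phrases it via $z(s),\tilde z(s)\in\calm(s)$ and the $\kappa$-Lipschitzness of $\Sigma$, you via Corollary~\ref{cor:ALWAYSNEGATIVE}, which is the same fact), then Gronwall on the $N$-projection, then insertion into the integral representation of $\Sigma(\tau,\eta)-\Sigma(\tau,\tilde\eta)$. Your explicit reduction of $\kappa_1<\kappa$ to the gap condition \eqref{e:GAPGENERAL00} correctly fills in what the paper calls a straightforward computation.
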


\begin{proof}
Analogously as in the proof of Lemma \ref{lem:NEG}  we have
$$|u(t)|_{N(t)}\leq e^{\rho(\tau-t)}|Q(\tau)(\eta-\tilde\eta)|_{N(\tau)}+L_1\int_{t}^{\tau}e^{\rho(s-t)}\norm{z(s)-\tilde{z}(s)}_{s}ds\ \ \text{for}\ \ t\leq\tau.$$
Since by Lemma \ref{lem:characterization} $z(s)$, $\tilde{z}(s)$ are in the $\kappa$-Lipschitz manifold $\calm(s)$ and $v(s)=\Sigma(s,z(s))-\Sigma(s,\tilde{z}(s))$, we get
$$|u(t)|_{N(t)}\leq e^{\rho(\tau-t)}|Q(\tau)(\eta-\tilde\eta)|_{N(\tau)}+L_1\Gamma(1,\kappa)\int_{t}^{\tau}e^{\rho(s-t)}|u(s)|_{N(s)}ds\ \ \text{for}\ \ t\leq\tau$$
and the Gronwall inequality gives
\begin{equation}\label{e:ESTU}
|u(t)|_{N(t)}\leq |Q(\tau)(\eta-\tilde\eta)|_{N(\tau)}e^{(\rho+L_1\Gamma(1,\kappa))(\tau-t)}\ \ \text{for}\ \  t\leq\tau.
\end{equation}
We also have
$$|\Sigma(\tau,\eta)-\Sigma(\tau,\tilde\eta)|_{S(\tau)}\leq L_2\int_{-\infty}^{\tau}e^{\gamma(s-\tau)}\norm{z(s)-\tilde{z}(s)}_{s}ds.$$
As $z(s)$, $\tilde{z}(s)$ are in the manifold $\calm(s)$, which is $\kappa$-Lipschitz, we get
$$|\Sigma(\tau,\eta)-\Sigma(\tau,\tilde\eta)|_{S(\tau)}\leq L_2\Gamma(1,\kappa)\int_{-\infty}^{\tau}e^{\gamma(s-\tau)}|u(s)|_{N(s)}ds.$$
Plugging \eqref{e:ESTU} in the above bound, we obtain
\begin{align*}
	&|\Sigma(\tau,\eta)-\Sigma(\tau,\tilde\eta)|_{S(\tau)}\leq L_2\Gamma(1,\kappa)|Q(\tau)(\eta-\tilde\eta)|_{N(\tau)}\int_{-\infty}^{\tau}e^{(\gamma-\rho-L_1\Gamma(1,\kappa))(s-\tau)}ds\\
& \ \ =\frac{L_2\Gamma(1,\kappa)}{\gamma-\rho-L_1\Gamma(1,\kappa)}|Q(\tau)(\eta-\tilde\eta)|_{N(\tau)}:=\kappa_1 |Q(\tau)(\eta-\tilde\eta)|_{N(\tau)}.
\end{align*}
Straightforward computation shows that $\frac{L_2\Gamma(1,\kappa)}{\gamma-\rho-L_1\Gamma(1,\kappa)} < \kappa$, which ends the proof.
\end{proof}

\begin{cor}\label{cor:betterlip}
The function $\Sigma$ defining the manifold $\{ \calm(t)\colon t\in \R\}$ is Lipschitz continuous,
\begin{equation}\label{e:SIGMALIPBETTER}
|\Sigma(\tau,\eta)-\Sigma(\tau,\tilde\eta)|_{S(\tau)}\leq\kappa_\Sigma|Q(\tau)(\eta-\tilde\eta)|_{N(\tau)},\ \eta,\tilde\eta\in X,\ \tau\in\R,
\end{equation}
with the Lipschitz constant $0<\kappa_\Sigma<\kappa$ satisfying
\begin{equation}\label{e:KAPPABEST}
\gamma-\rho=L_1\Gamma(1,\kappa_\Sigma)+L_2\Gamma\left(\frac{1}{\kappa_\Sigma},1\right),
\end{equation}
which is equivalent to say that $\kappa_\Sigma = \frac{L_2}{L_1}\frac{\sigma-\rho}{\gamma-\sigma}$ for $\sigma\in (\rho,\gamma)$ satisfying $\Gamma\left(\frac{L_1}{\sigma-\rho},\frac{L_2}{\gamma-\sigma}\right) = 1.$
\end{cor}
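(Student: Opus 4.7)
The plan is to iterate Theorem~\ref{thm:betterlip}. Define
$$\Psi(k) := \frac{L_2\Gamma(1,k)}{\gamma-\rho-L_1\Gamma(1,k)}$$
on the set $\{k\geq 0 \colon L_1\Gamma(1,k) < \gamma-\rho\}$. By admissibility of $\Gamma$, the map $k\mapsto\Gamma(1,k)$ is continuous and strictly increasing, hence so is $\Psi$. The content of Theorem~\ref{thm:betterlip} is that whenever $\Sigma$ is already known to be $k$-Lipschitz with $k\leq\kappa$, repeating its proof verbatim with $k$ in place of $\kappa$ upgrades the Lipschitz bound of $\Sigma$ to $\Psi(k)$, since the only role of $\kappa$ in that proof is as the previously-established Lipschitz constant used to bound $\|z(s)-\tilde z(s)\|_s$ by $\Gamma(1,\kappa)|u(s)|_{N(s)}$.

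Starting from $\kappa_0:=\kappa$ and setting $\kappa_{n+1}:=\Psi(\kappa_n)$, the inequality $\Psi(\kappa_0)<\kappa_0$ proved at the end of Theorem~\ref{thm:betterlip} combines with monotonicity of $\Psi$ to give by induction that $\kappa_{n+1}<\kappa_n$ for all $n$, while $L_1\Gamma(1,\kappa_n)\leq L_1\Gamma(1,\kappa_0)<\gamma-\rho$ keeps the recursion well-defined. The sequence $\{\kappa_n\}$ is therefore strictly decreasing and bounded below by $0$, so it converges to some $\kappa_\Sigma\geq 0$, and continuity of $\Psi$ forces $\Psi(\kappa_\Sigma)=\kappa_\Sigma$. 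Since $\Psi(0)=L_2\Gamma(1,0)/(\gamma-\rho-L_1\Gamma(1,0))>0$, the limit satisfies $\kappa_\Sigma>0$. The bound~\eqref{e:SIGMALIPBETTER} is then obtained by passing to the limit in the Lipschitz inequality written for $\kappa_n$.

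To see that $\kappa_\Sigma$ satisfies~\eqref{e:KAPPABEST}, rewrite $\Psi(\kappa_\Sigma)=\kappa_\Sigma$ as
$$\kappa_\Sigma(\gamma-\rho)=L_2\Gamma(1,\kappa_\Sigma)+\kappa_\Sigma L_1\Gamma(1,\kappa_\Sigma),$$
divide by $\kappa_\Sigma$, and use the homogeneity identity $\Gamma(1,\kappa_\Sigma)/\kappa_\Sigma=\Gamma(1/\kappa_\Sigma,1)$. For the parametric characterization, set $\sigma:=\rho+L_1\Gamma(1,\kappa_\Sigma)$; then~\eqref{e:KAPPABEST} gives $\gamma-\sigma=L_2\Gamma(1/\kappa_\Sigma,1)>0$, so $\sigma\in(\rho,\gamma)$, and a direct computation using homogeneity of $\Gamma$ produces both $\kappa_\Sigma=(L_2/L_1)(\sigma-\rho)/(\gamma-\sigma)$ and $\Gamma(L_1/(\sigma-\rho),L_2/(\gamma-\sigma))=1$; the reverse implication is obtained by running the same computation backwards.

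The only place one must be careful is in observing that the iteration step is legitimate at every stage: the proof of Theorem~\ref{thm:betterlip} is applicable to $\kappa_n$ provided $\Sigma$ is already known to be $\kappa_n$-Lipschitz, which is exactly what the previous iteration has established, and provided $L_1\Gamma(1,\kappa_n)<\gamma-\rho$, which follows from $\kappa_n\leq\kappa_0$ and monotonicity of $\Gamma(1,\cdot)$. Once this is noted, the argument reduces to monotone convergence of a scalar recursion and no serious obstacle remains.
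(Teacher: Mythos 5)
Your proposal is correct and follows essentially the same route as the paper: the paper's proof of Corollary~\ref{cor:betterlip} is precisely the iteration of Theorem~\ref{thm:betterlip}, producing a strictly decreasing sequence of Lipschitz constants whose limit satisfies \eqref{e:KAPPABEST}. You merely spell out the details the paper leaves implicit (monotonicity and continuity of the scalar map $k\mapsto L_2\Gamma(1,k)/(\gamma-\rho-L_1\Gamma(1,k))$, positivity of the limit, passage to the limit in the Lipschitz bound, and the homogeneity computation linking the fixed-point equation to \eqref{e:KAPPABEST} and to the parametric form in $\sigma$), all of which are sound.
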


\begin{proof}
The proof of Theorem~\ref{thm:betterlip} can be used to iteratively improve the Lipschitz constant. Namely, 
$$\kappa_2=\frac{L_2\Gamma(1,\kappa_1)}{\gamma-\rho-L_1\Gamma(1,\kappa_1)} < \kappa_1$$
is a valid and better Lipschitz constant for $\Sigma$. Repeating the procedure, we obtain a strictly decreasing sequence $(\kappa_n)$ of 
Lipschitz constants converging to the Lipschitz constant $\kappa_\Sigma>0$ satisfying \eqref{e:KAPPABEST}. 
\end{proof}

Observe that the equation \eqref{e:KAPPABEST} does not contain $\sigma$ from the gap condition \eqref{e:GAPGENERAL00}. Hence, the value of $\kappa_\Sigma$ depends only on $\gamma, \rho, L_1, L_2$ and the chosen admissible norm $\Gamma$.     
In the next sections we always assume that $\kappa_\Sigma$ is the Lipschitz constant of the constructed inertial manifold.	

\subsection{Estimate of the distance to the manifold and its controlled growth.} We have proved that the constructed manifold is invariant and consists of Lipschitz graphs. We recall that if $(\tau,\eta) \in \R\times X$ and $z_{\tau,\eta} \in E_{\sigma,\tau}$ is a fixed point of $H_{\tau,\eta}$, then, denoting $p(t) = (I-Q(t))z_{\tau,\eta}(t)$ and $q(t)=Q(t)z_{\tau,\eta}(t)$ for $t\in (-\infty,\tau]$, by Lemma \ref{lem:characterization} and the definition of $\Sigma$, we deduce $p(t) = \Sigma(t,q(t))$. Moreover, applying projections $Q(t)$ and $I-Q(t)$ to \eqref{eq:fixed_point} we deduce that $p, q$ satisfy the following equations:
\begin{align}\label{e:QWITHSIGMA}
&	q(t)=L(t,\tau)Q(\tau)\eta-\int_{t}^{\tau}L(t,s)Q(s)f(s,q(s)+\Sigma(s,q(s)))ds\ \ \text{for}\ \  t\in (-\infty,\tau],\\
& \label{e:PWITHSIGMA}
	\Sigma(t,q(t))=\int_{-\infty}^{t}L(t,s)(I-Q(s))f(s,q(s)+\Sigma(s,q(s)))ds\ \  \text{for}\ \ t\in\ \  (-\infty,\tau].
\end{align}

We continue by showing more properties of $\{ \calm(t)\colon t\in \R\}$ defined in \eqref{e:MANIFOLD}, stressing that the proofs of this subsection are largely analogous to those of \cite{CLMO-S25}. 

Given $(\tau,\eta)\in\R\times X$, we define the quantity
$$\chi(t)=T(t,\tau)\eta-P_\Sigma(t)T(t,\tau)\eta=(I-Q(t))T(t,\tau)\eta-\Sigma(t,Q(t)T(t,\tau)\eta)\in S(t)\ \ \text{for}\ \  t\geq\tau,$$
where $P_\Sigma(t)\colon X\to\calm(t)$ for $t\in\R$ is defined in \eqref{eq-Def_P_Sigma}.

We carry out the fixed point procedure  from $T(t,\tau)\eta$ which yields the fixed point $z_{t,T(t,\tau)\eta} \in E_{\sigma,\tau}$. 
For $t\geq\tau$ we denote
$$q_{*}(s,t)=Q(s)z_{t,T(t,\tau)\eta}(s)\ \ \text{for}\ \ s\in (-\infty,t].$$
In particular, for $t=\tau$ and $s\in (-\infty,\tau]$ we have $q_{*}(s,\tau)=q(s)$ for $s\in (-\infty,\tau]$. We start from deriving two auxiliary estimates that will be useful in the estimation of the distance of solutions to the invariant manifold.

\begin{lem}
We have
\begin{align}\label{e:QSTAR1}
& |q_{*}(s,t)-Q(s)T(s,\tau)\eta|_{N(s)}\leq L_1\Gamma(0,1)\int_{s}^{t}|\chi(r)|_{S(r)}e^{(\rho+L_1\Gamma(1,\kappa_\Sigma))(r-s)}dr\ \ \text{for}\ \  s\in[\tau,t],\\
& \label{e:QSTAR2}
|q_{*}(s,t)-q_{*}(s,\tau)|_{N(s)}\leq L_1\Gamma(0,1)\int_{\tau}^{t} |\chi(r)|_{S(r)}e^{(\rho+L_1\Gamma(1,\kappa_\Sigma))(r-s)}dr\ \ \text{for}\ \  s\in (-\infty,\tau].
\end{align}
\end{lem}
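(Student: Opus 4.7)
Both bounds will follow the same scheme: project the variation-of-constants identity \eqref{e:VCF} onto $N(s)$, subtract two trajectories, use the Lipschitzness of $f$ and of $\Sigma$ to close the inequality, and finish with Gronwall. For \eqref{e:QSTAR1} the trajectories are $T(\cdot,\tau)\eta$ (which in general leaves $\calm$) and $w:=z_{t,T(t,\tau)\eta}$, which, being a fixed point of $H_{t,T(t,\tau)\eta}$, lies on $\calm$; for \eqref{e:QSTAR2} both trajectories $z_{\tau,\eta}$ and $z_{t,T(t,\tau)\eta}$ lie on $\calm$.

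For \eqref{e:QSTAR1}, Lemma~\ref{lem:lemma_solution} gives the boundary identity $Q(t)w(t) = Q(t)T(t,\tau)\eta$, so the two $N$-projected backward VCFs on $[s,t]$ agree at $r=t$ and differ only by $\int_s^t L(s,r)Q(r)[f(r,T(r,\tau)\eta)-f(r,w(r))]\,dr$. Taking $|\cdot|_{N(s)}$ and using \eqref{e:L1} reduces the task to bounding $\|T(r,\tau)\eta-w(r)\|_r$. Since $w(r)\in\calm(r)$, adding and subtracting $\Sigma(r,Q(r)T(r,\tau)\eta)$ in the $S(r)$-component gives
$$(I-Q(r))[T(r,\tau)\eta - w(r)] = \chi(r) + \bigl[\Sigma(r,Q(r)T(r,\tau)\eta) - \Sigma(r,q_{*}(r,t))\bigr],$$
and Corollary~\ref{cor:betterlip} together with the triangle inequality $\Gamma(a,b+c)\leq\Gamma(a,b)+\Gamma(0,c)$ for the admissible norm $\Gamma$ yields the clean decomposition
$$\|T(r,\tau)\eta - w(r)\|_r \leq \Gamma(1,\kappa_\Sigma)\,|Q(r)T(r,\tau)\eta - q_{*}(r,t)|_{N(r)} + \Gamma(0,1)\,|\chi(r)|_{S(r)}.$$
Substituting, multiplying through by $e^{\rho s}$ and applying Lemma~\ref{lem:GRONWALL2} with $\beta=L_1\Gamma(1,\kappa_\Sigma)$ and $\alpha(r)=L_1\Gamma(0,1)e^{\rho r}|\chi(r)|_{S(r)}$ delivers \eqref{e:QSTAR1}.

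The bound \eqref{e:QSTAR2} is cleaner because both trajectories lie on $\calm$, so the Lipschitzness of $\Sigma$ alone gives $\|z_{t,T(t,\tau)\eta}(r) - z_{\tau,\eta}(r)\|_r \leq \Gamma(1,\kappa_\Sigma)\,|q_{*}(r,t) - q_{*}(r,\tau)|_{N(r)}$ with no $\chi$-correction. Subtracting the $N$-projected backward VCFs on $[s,\tau]$ produces the boundary term $e^{\rho(\tau-s)}|q_{*}(\tau,t)-Q(\tau)\eta|_{N(\tau)}$, which is exactly what \eqref{e:QSTAR1} controls at $s=\tau$; after rescaling $\phi(s):=e^{\rho s}|q_{*}(s,t)-q_{*}(s,\tau)|_{N(s)}$ one obtains $\phi(s)\leq C + L_1\Gamma(1,\kappa_\Sigma)\int_s^\tau \phi(r)\,dr$ for a constant $C$ depending only on the initial discrepancy, so the first Gronwall lemma (yielding \eqref{e:GRONWALL1}) collapses the exponentials into the required $(\rho+L_1\Gamma(1,\kappa_\Sigma))(r-s)$ and delivers \eqref{e:QSTAR2}. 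The only genuinely subtle step throughout is the triangle inequality $\Gamma(a,b+c)\leq\Gamma(a,b)+\Gamma(0,c)$ that peels off $|\chi(r)|_{S(r)}$ with coefficient $\Gamma(0,1)$ rather than $\Gamma(1,1)$; this sharper constant is what ultimately propagates into the distance-to-manifold estimate (IV) of Theorem~\ref{thm:MAIN1}.
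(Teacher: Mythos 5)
Your proposal is correct and follows essentially the same route as the paper: the same identity obtained by subtracting the $N$-projected variation-of-constants formulas, the same add-and-subtract of $\Sigma(r,Q(r)T(r,\tau)\eta)$ with the triangle inequality for $\Gamma$ producing the coefficients $\Gamma(1,\kappa_\Sigma)$ and $\Gamma(0,1)$, and the same two Gronwall applications, with \eqref{e:QSTAR1} evaluated at $s=\tau$ supplying the constant boundary term in the proof of \eqref{e:QSTAR2}.
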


\begin{proof}
The function $q_{*}(s,t)$ defined for $s\in (-\infty,t]$ satisfies the equation
\begin{align*}
	& q_{*}(s,t)=L(s,t)Q(t)T(t,\tau)\eta-\int_{s}^{t}L(s,r)Q(r)f(r,q_{*}(r,t)+\Sigma(r,q_{*}(r,t)))dr\\
& =L(s,\tau)Q(\tau)\eta+\int_{\tau}^{t}L(s,r)Q(r)f(r,T(r,\tau)\eta)dr-\int_{s}^{t}L(s,r)Q(r)f(r,q_{*}(r,t)+\Sigma(r,q_{*}(r,t)))dr.
\end{align*}
Therefore, for $s\in[\tau,t]$ we have
$$q_{*}(s,t)-Q(s)T(s,\tau)\eta=\int_{s}^{t}L(s,r)Q(r)[f(r,T(r,\tau)\eta)-f(r,q_{*}(r,t)+\Sigma(r,q_{*}(r,t)))]dr.$$
We continue by estimating for $s\in[\tau,t]$ 
\begin{align*}
&	e^{\rho s}|q_{*}(s,t)-Q(s)T(s,\tau)\eta|_{N(s)}\\
& \ \ \leq L_1\int_{s}^{t}e^{\rho r}\norm{Q(r)T(r,\tau)\eta+(I-Q(r))T(r,\tau)\eta-q_{*}(r,t)-\Sigma(r,q_{*}(r,t)))}_{r}dr\\
& =L_1\int_{s}^{t}e^{\rho r}\Gamma(|Q(r)T(r,\tau)\eta-q_{*}(r,t)|_{N(r)},|(I-Q(r))T(r,\tau)\eta-\Sigma(r,q_{*}(r,t))|_{S(r)})\, dr.
\end{align*}
But we have
\begin{align*}
& |(I-Q(r))T(r,\tau)\eta-\Sigma(r,q_{*}(r,t))|_{S(r)}\\
&  \leq  |(I-Q(r))T(r,\tau)\eta-\Sigma(r,Q(r)T(r,\tau)\eta)|_{S(r)}+|\Sigma(r,Q(r)T(r,\tau)\eta)-\Sigma(r,q_{*}(r,t))|_{S(r)}\\
& \leq |\chi(r)|_{S(r)} + \kappa_\Sigma |Q(r)T(r,\tau)\eta-q_{*}(r,t)|_{N(r)}.
\end{align*}
Hence we obtain
\begin{align*}
	&	e^{\rho s}|q_{*}(s,t)-Q(s)T(s,\tau)\eta|_{N(s)}\\
& \leq L_1\int_{s}^{t}e^{\rho r}\Gamma(|Q(r)T(r,\tau)\eta-q_{*}(r,t)|_{N(r)},|\chi(r)|_{S(r)}+\kappa_\Sigma|Q(r)T(r,\tau)\eta-q_{*}(r,t)|_{N(r)})dr\\
&\leq\int_{s}^{t} L_1\Gamma(0,1) e^{\rho r}|\chi(r)|_{S(r)}+ L_1\Gamma(1,\kappa_\Sigma)e^{\rho r}|q_{*}(r,t)-Q(r)T(r,\tau)\eta|_{N(r)}dr.
\end{align*}
By the Gronwall inequality (see Lemma~\ref{lem:GRONWALL2}) we get \eqref{e:QSTAR1}. To prove \eqref{e:QSTAR2} observe that for $s\in (-\infty,\tau]$ we have
$$q_{*}(s,t)=L(s,\tau)q_{*}(\tau,t)-\int_{s}^{\tau}L(s,r)Q(r)f(r,q_{*}(r,t)+\Sigma(r,q_{*}(r,t)))dr,$$
and
\begin{align*}
	&q_{*}(s,t)-q_{*}(s,\tau)=L(s,\tau)[q_{*}(\tau,t)-Q(\tau)\eta]\\
& \ \ \ \ -\int_{s}^{\tau}L(s,r)Q(r)[f(r,q_{*}(r,t)+\Sigma(r,q_{*}(r,t)))-f(r,q_{*}(r,\tau)+\Sigma(r,q_{*}(r,\tau)))]dr.
\end{align*}
Thus we get for $s\in (-\infty,\tau]$
\begin{align*}
&	e^{\rho s}|q_{*}(s,t)-q_{*}(s,\tau)|_{N(s)}\leq e^{\rho\tau}|q_{*}(\tau,t)-Q(\tau)\eta|_{N(\tau)}\\
& \qquad\qquad  +L_1\int_{s}^{\tau}e^{\rho r}\norm{q_{*}(r,t)+\Sigma(r,q_{*}(r,t))-q_{*}(r,\tau)-\Sigma(r,q_{*}(r,\tau))}_{r}dr\\
& \  =e^{\rho\tau}|q_{*}(\tau,t)-Q(\tau)\eta|_{N(\tau)}\\
& \qquad \qquad +L_1\int_{s}^{\tau}e^{\rho r}\Gamma(|q_{*}(r,t)-q_{*}(r,\tau)|_{N(r)},|\Sigma(r,q_{*}(r,t))-\Sigma(r,q_{*}(r,\tau))|_{S(r)})dr\\
& \ \leq L_1\Gamma(0,1)\int_{\tau}^{t}e^{\rho r}|\chi(r)|_{S(r)}e^{L_1\Gamma(1,\kappa_\Sigma)(r-\tau)}dr+L_1\Gamma(1,\kappa_\Sigma)\int_{s}^{\tau}e^{\rho r}|q_{*}(r,t)-q_{*}(r,\tau)|_{N(r)}dr,
\end{align*}
where, in the last estimate, we have used \eqref{e:QSTAR1}. By the Gronwall lemma we obtain \eqref{e:QSTAR2}.
\end{proof}
We are in position to derive the bound on the evolution of the  error between the solution of the problem and its nonlinear projection $P_\Sigma$ onto the invariant manifold.
\begin{lem}\label{lem:error}
We have
\begin{equation}\label{e:ESTFORWARD}
|T(t,\tau)\eta-P_\Sigma(t)T(t,\tau)\eta|_{S(t)}\leq |\eta-P_\Sigma(\tau)\eta|_{S(\tau)}e^{\bigl(-\gamma+\frac{(\gamma-\rho)L_2\Gamma(0,1)}{\gamma-\rho-L_1\Gamma(1,\kappa_\Sigma)}\bigr)(t-\tau)}\ \ \text{for}\ \  t\geq\tau,\ \eta\in X.
\end{equation}
\end{lem}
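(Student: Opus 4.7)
The plan is to derive an integral identity for $\chi(t)$ and then reduce the resulting estimate to a Gronwall-type inequality. The starting point is to combine the variation of constants formula \eqref{e:VCF} for $T(t,\tau)\eta$ projected on $S(t)$ with the integral representation \eqref{e:PWITHSIGMA} of $\Sigma(t,\cdot)$ evaluated along the fixed-point trajectory $z_{t,T(t,\tau)\eta}$. Explicitly, one writes
\begin{equation*}
(I-Q(t))T(t,\tau)\eta = L(t,\tau)(I-Q(\tau))\eta + \int_\tau^t L(t,s)(I-Q(s))f(s, T(s,\tau)\eta)\, ds,
\end{equation*}
\begin{equation*}
\Sigma(t, Q(t)T(t,\tau)\eta) = \int_{-\infty}^{t} L(t,s)(I-Q(s))f(s, q_*(s,t)+\Sigma(s, q_*(s,t)))\, ds.
\end{equation*}
Splitting the second integral at $\tau$ and noting that $L(t,\tau)\Sigma(\tau, Q(\tau)\eta)$ equals $\int_{-\infty}^{\tau} L(t,s)(I-Q(s))f(s, q(s)+\Sigma(s,q(s)))\, ds$ with $q(s) = q_*(s,\tau)$, subtraction yields $\chi(t) = L(t,\tau)\chi(\tau) + I_1(t) - I_2(t)$, where $I_1$ integrates a difference of nonlinearities over $[\tau, t]$ involving $T(s,\tau)\eta$ and $q_*(s,t)+\Sigma(s,q_*(s,t))$, and $I_2$ integrates a difference over $(-\infty,\tau]$ involving the two backward trajectories $z_{t,T(t,\tau)\eta}$ and $z_{\tau,\eta}$ which both lie in the manifold.

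I would next apply $|\cdot|_{S(t)}$, picking up $e^{-\gamma(t-\tau)}|\chi(\tau)|_{S(\tau)}$ from the linear term. For the two integrals, the Lipschitz bound \eqref{e:L2} reduces matters to integrals of the moving-norm differences $\|T(s,\tau)\eta - q_*(s,t) - \Sigma(s,q_*(s,t))\|_s$ on $[\tau,t]$ and $\|q_*(s,t) + \Sigma(s,q_*(s,t)) - q_*(s,\tau) - \Sigma(s, q_*(s,\tau))\|_s$ on $(-\infty,\tau]$. Unfolding the definition of $\Gamma$, using the $\kappa_\Sigma$-Lipschitz bound \eqref{e:SIGMALIPBETTER}, and rewriting
\begin{equation*}
(I-Q(s))T(s,\tau)\eta - \Sigma(s,q_*(s,t)) = \chi(s) + \Sigma(s,Q(s)T(s,\tau)\eta) - \Sigma(s,q_*(s,t)),
\end{equation*}
I dominate both integrands by expressions linear in $|\chi(s)|_{S(s)}$ and in $|Q(s)T(s,\tau)\eta - q_*(s,t)|_{N(s)}$ or $|q_*(s,t)-q_*(s,\tau)|_{N(s)}$, respectively. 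Substituting the pointwise bounds \eqref{e:QSTAR1} and \eqref{e:QSTAR2} from the preceding lemma converts the $N$-components into integrals of $|\chi(r)|_{S(r)}$ with exponential weights $e^{(\rho+L_1\Gamma(1,\kappa_\Sigma))(r-s)}$.

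After interchanging the order of integration in the resulting double integrals (by Fubini) and setting $y(t) = e^{\gamma t}|\chi(t)|_{S(t)}$ on $[\tau,\infty)$, the estimate collapses to
\begin{equation*}
y(t) \leq e^{\gamma\tau}|\chi(\tau)|_{S(\tau)} + C\int_\tau^t y(r)\, dr,
\end{equation*}
where $C = \tfrac{(\gamma-\rho)L_2\Gamma(0,1)}{\gamma-\rho-L_1\Gamma(1,\kappa_\Sigma)}$; the positivity of the denominator is guaranteed by \eqref{e:KAPPABEST}, since $L_1\Gamma(1,\kappa_\Sigma) < L_1\Gamma(1,\kappa_\Sigma)+L_2\Gamma(1/\kappa_\Sigma,1) = \gamma-\rho$. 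The classical Gronwall lemma applied to $y$ then yields $y(t) \leq e^{\gamma\tau}|\chi(\tau)|_{S(\tau)}e^{C(t-\tau)}$, which is exactly \eqref{e:ESTFORWARD} after dividing by $e^{\gamma t}$.

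The main technical obstacle is the bookkeeping in the double-integral swap: one must verify that the weights $e^{(\rho+L_1\Gamma(1,\kappa_\Sigma))(r-s)}$ coming from \eqref{e:QSTAR1}--\eqref{e:QSTAR2}, combined with the weights $e^{\gamma(s-t)}$ intrinsic to the $S$-part of the splitting, integrate (after Fubini) to precisely $\tfrac{L_2\Gamma(0,1)}{\gamma-\rho-L_1\Gamma(1,\kappa_\Sigma)}$ multiplied by $(\gamma-\rho)$. It is here, through the inner integration producing the factor $(\gamma-\rho-L_1\Gamma(1,\kappa_\Sigma))^{-1}$, that the specific algebraic form of the decay rate $\omega = \gamma - C$ arises.
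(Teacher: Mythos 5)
Your proposal is correct and follows essentially the same route as the paper: the same decomposition of $\chi(t)-L(t,\tau)\chi(\tau)$ into the two integrals over $[\tau,t]$ and $(-\infty,\tau]$, the same use of the $\kappa_\Sigma$-Lipschitz bound together with \eqref{e:QSTAR1}--\eqref{e:QSTAR2}, the same Fubini interchange producing the factor $(\gamma-\rho-L_1\Gamma(1,\kappa_\Sigma))^{-1}$, and the same Gronwall step for $e^{\gamma t}|\chi(t)|_{S(t)}$. No substantive differences.
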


\begin{proof}
By \eqref{e:PWITHSIGMA} we have for $t\geq\tau$
\begin{align*}
&	\chi(t)-L(t,\tau)\chi(\tau)=(I-Q(t))[T(t,\tau)\eta-L(t,\tau)\eta]+L(t,\tau)\Sigma(\tau,Q(\tau)\eta)-\Sigma(t,Q(t)T(t,\tau)\eta)\\
& =\int_{\tau}^{t}L(t,s)(I-Q(s))f(s,T(s,\tau)\eta)ds+\int_{-\infty}^{\tau}L(t,s)(I-Q(s))f(s,q_{*}(s,\tau)+\Sigma(s,q_{*}(s,\tau))ds\\
& \ \ -\int_{-\infty}^{t}L(t,s)(I-Q(s))f(s,q_{*}(s,t)+\Sigma(s,q_{*}(s,t)))ds\\
&=\int_{\tau}^{t}L(t,s)(I-Q(s))[f(s,T(s,\tau)\eta)-f(s,q_{*}(s,t)+\Sigma(s,q_{*}(s,t)))]ds\\
&\ \ +\int_{-\infty}^{\tau}L(t,s)(I-Q(s))[f(s,q_{*}(s,\tau)+\Sigma(s,q_{*}(s,\tau)))-f(s,q_{*}(s,t)+\Sigma(s,q_{*}(s,t)))]ds.
\end{align*}
Estimating, we obtain
\begin{align*}
	& e^{\gamma t}|\chi(t)-L(t,\tau)\chi(\tau)|_{S(t)}\\
	&\ \  \leq L_2\int_{\tau}^{t}e^{\gamma s}\norm{T(s,\tau)\eta-q_{*}(s,t)-\Sigma(s,q_{*}(s,t))}_{s}ds\\
	& \qquad \qquad +L_2\Gamma(1,\kappa_\Sigma)\int_{-\infty}^{\tau}e^{\gamma s}|q_{*}(s,t)-q_{*}(s,\tau)|_{N(s)}ds\\
	& \ \ \leq L_2\Gamma(1,\kappa_\Sigma)\int_{\tau}^{t}e^{\gamma s}|Q(s)T(s,\tau)\eta-q_{*}(s,t)|_{N(s)}ds\\
& \qquad \qquad +L_2\Gamma(0,1)\int_{\tau}^{t}e^{\gamma s}|(I-Q(s))T(s,\tau)\eta-\Sigma(s,Q(s)T(s,\tau)\eta)|_{S(s)}ds\\
& \qquad \qquad +L_2\Gamma(1,\kappa_\Sigma)\int_{-\infty}^{\tau}e^{\gamma s}|q_{*}(s,t)-q_{*}(s,\tau)|_{N(s)}ds.
\end{align*}
Therefore we get by \eqref{e:QSTAR1} and \eqref{e:QSTAR2}
\begin{align*}
	& e^{\gamma t}|\chi(t)-L(t,\tau)\chi(\tau)|_{S(t)}\\
	& \leq L_2\Gamma(0,1)\int_{\tau}^{t}e^{\gamma s}|\chi(s)|_{S(s)}ds+L_1L_2\Gamma(0,1)\Gamma(1,\kappa_\Sigma)\int_{\tau}^{t}\int_{s}^{t}e^{\gamma s}|\chi(r)|_{S(r)}e^{(\rho+L_1\Gamma(1,\kappa_\Sigma))(r-s)}drds\\
&\ \ +L_1L_2\Gamma(0,1)\Gamma(1,\kappa_\Sigma)\int_{-\infty}^{\tau}\int_{\tau}^{t} e^{\gamma s}|\chi(r)|_{S(r)}e^{(\rho+L_1\Gamma(1,\kappa_\Sigma))(r-s)}drds\\
& =L_2\Gamma(0,1)\int_{\tau}^{t}e^{\gamma s}|\chi(s)|_{S(s)}ds\\
&\ \ \ \ \ \ \ +L_1L_2\Gamma(0,1)\Gamma(1,\kappa_\Sigma)\int_{\tau}^{t}|\chi(r)|_{S(r)}e^{(\rho+L_1\Gamma(1,\kappa_\Sigma))r}\int_{-\infty}^{r} e^{(\gamma -\rho-L_1\Gamma(1,\kappa_\Sigma))s}dsdr\\
&=L_2\Gamma(0,1)\int_{\tau}^{t} e^{\gamma r}|\chi(r)|_{S(r)}dr+\frac{L_1L_2\Gamma(0,1)\Gamma(1,\kappa_\Sigma)}{\gamma-\rho-L_1\Gamma(1,\kappa_\Sigma)}\int_{\tau}^{t}e^{\gamma r}|\chi(r)|_{S(r)}dr\ \ \text{for}\ \  t\geq\tau,
\end{align*}
since $\gamma-\rho> L_1\Gamma(1,\kappa_\Sigma)$.
This yields
\begin{align*}
	& e^{\gamma t}|\chi(t)|_{S(t)}\leq e^{\gamma\tau}|\chi(\tau)|_{S(\tau)}+\left(L_2 +\frac{L_1L_2\Gamma(1,\kappa_\Sigma)}{\gamma-\rho-L_1\Gamma(1,\kappa_\Sigma)}\right)\Gamma(0,1)\int_{\tau}^{t}e^{\gamma r}|\chi(r)|_{S(r)}dr\\
	& = e^{\gamma\tau}|\chi(\tau)|_{S(\tau)}+\frac{(\gamma-\rho)L_2\Gamma(0,1)}{\gamma-\rho-L_1\Gamma(1,\kappa_\Sigma)}\int_{\tau}^{t}e^{\gamma r}|\chi(r)|_{S(r)}dr \ \ \text{for}\  t\geq\tau.
	\end{align*}
The Gronwall inequality yields
$$|\chi(t)|_{S(t)}\leq |\chi(\tau)|_{S(\tau)}e^{\bigl(-\gamma+\frac{(\gamma-\rho)L_2\Gamma(0,1)}{\gamma-\rho-L_1\Gamma(1,\kappa_\Sigma)}\bigr)(t-\tau)}\ \ \text{for}\ \  t\geq\tau,$$
and the proof is complete.
\end{proof}

The next result, in addition to providing the estimate on the distance of solutions to the manifold, yields the criterion under which it is attracting, that is, when this manifold is inertial. 

\begin{cor}\label{cor:POSINV}
We have
\begin{equation}\label{e:error_repeated}
|T(t,\tau)\eta-P_\Sigma(t)T(t,\tau)\eta|_{S(t)}\leq |\eta-P_\Sigma(\tau)\eta|_{S(\tau)}e^{-\omega(t-\tau)}\ \ \text{for}\ \  t\geq\tau,\ \eta\in X,
\end{equation}
or, in the original norm of $X$,
\begin{equation}\label{e:ESTFORWARD2M}
\norm{T(t,\tau)\eta-P_\Sigma(t)T(t,\tau)\eta}\leq M(1+\kappa_\Sigma)\norm{\eta}e^{-\omega(t-\tau)}\ \ \text{for}\ \  \ t\geq\tau,\ \eta\in X,
\end{equation}
where
\begin{equation}\label{e:DEFNOMEGA}
\omega=\gamma-\frac{(\gamma-\rho)L_2\Gamma(0,1)}{\gamma-\rho-L_1\Gamma(1,\kappa_\Sigma)}
\end{equation}
with the Lipschitz constant $\kappa_\Sigma$ of $\{ \calm(t)\colon t\in \R\}$. Moreover, $\omega$ belongs to the interval $(\rho,\gamma)$.
In particular, given a bounded set $G\subset X$ there exists $c_{G}>0$ such that
\begin{equation*}
\dist(T(t,\tau)G,\calm(t))\leq c_{G}e^{-\omega(t-\tau)},\ t\geq\tau.
\end{equation*}
If $\omega>0$, then $\{\calm(t)\colon t\in\R\}$ is an inertial manifold for the process. 
\end{cor}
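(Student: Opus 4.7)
The plan is to obtain the corollary as a bundle of straightforward consequences of the already-established Lemma \ref{lem:error}, the equivalence of the adapted norms with $\norm{\cdot}$ from Lemma \ref{lem:norms}, the Lipschitz bound \eqref{e:SIGMALIPBETTER}, and the defining equation \eqref{e:KAPPABEST} of $\kappa_\Sigma$. There is no new fixed point argument needed; the main content is already in Lemma \ref{lem:error}.

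First, I would identify \eqref{e:error_repeated} as nothing more than the restatement of \eqref{e:ESTFORWARD} with the short-hand $\omega$ introduced in \eqref{e:DEFNOMEGA}. Next, to pass to the estimate \eqref{e:ESTFORWARD2M} in the original norm of $X$, I would write
$$\eta-P_\Sigma(\tau)\eta=(I-Q(\tau))\eta-\Sigma(\tau,Q(\tau)\eta),$$
apply the triangle inequality in $|\cdot|_{S(\tau)}$, and use $\Sigma(\tau,0)=0$ together with \eqref{e:SIGMALIPBETTER} to bound
$|\Sigma(\tau,Q(\tau)\eta)|_{S(\tau)}\leq\kappa_\Sigma|Q(\tau)\eta|_{N(\tau)}$. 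Invoking \eqref{e:QI-QONX} twice then yields $|\eta-P_\Sigma(\tau)\eta|_{S(\tau)}\leq M(1+\kappa_\Sigma)\norm{\eta}$. On the left-hand side, the first inequality in \eqref{e:EQUIVNS} gives $\norm{y}\leq|y|_{S(t)}$ for $y\in S(t)$; combining these with \eqref{e:error_repeated} gives \eqref{e:ESTFORWARD2M}.

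The only mildly nontrivial step is showing $\omega\in(\rho,\gamma)$. The upper bound $\omega<\gamma$ is immediate because $\gamma-\rho-L_1\Gamma(1,\kappa_\Sigma)>0$ (used already in the proof of Theorem~\ref{thm:betterlip}) and $L_2\Gamma(0,1)>0$. For the lower bound $\omega>\rho$, the inequality to check reduces algebraically to
$$\gamma-\rho>L_1\Gamma(1,\kappa_\Sigma)+L_2\Gamma(0,1).$$
Here I would invoke the characterizing identity \eqref{e:KAPPABEST}, namely $\gamma-\rho=L_1\Gamma(1,\kappa_\Sigma)+L_2\Gamma\!\left(\tfrac{1}{\kappa_\Sigma},1\right)$, and use the strict monotonicity of the admissible norm $\Gamma$ in its first argument (Definition~\ref{defn:ADMISSIBLENORM}) to conclude $\Gamma\!\left(\tfrac{1}{\kappa_\Sigma},1\right)>\Gamma(0,1)$; since $\kappa_\Sigma>0$, this gives the desired strict inequality. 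This is the step I expect to be the main, though still minor, obstacle, since it is the only place where the precise value of $\kappa_\Sigma$ (rather than merely its existence) is needed.

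Finally, for the Hausdorff semi-distance bound, fix a bounded $G\subset X$ and set $C_G:=M(1+\kappa_\Sigma)\sup_{\eta\in G}\norm{\eta}$. For each $\eta\in G$, the point $P_\Sigma(t)T(t,\tau)\eta$ lies in $\calm(t)$ by \eqref{e:MANIFOLD}, so taking the infimum over $\calm(t)$ and then the supremum over $G$ in \eqref{e:ESTFORWARD2M} yields the claimed inequality with constant $c_G=C_G$. When $\omega>0$ this exponential contraction together with invariance (Lemma~\ref{lem:invariance}) and the Lipschitz graph property (Corollary~\ref{cor:betterlip}) matches all three requirements of Definition~\ref{defn:INERTIAL}, so $\{\calm(t)\}$ is an inertial manifold.
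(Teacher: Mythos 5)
Your proposal is correct and follows essentially the same route as the paper: \eqref{e:error_repeated} is read off from Lemma~\ref{lem:error}, the passage to \eqref{e:ESTFORWARD2M} uses the same triangle inequality with $\Sigma(\tau,0)=0$, \eqref{e:EQUIVNS} and \eqref{e:QI-QONX}, and the lower bound $\omega>\rho$ is obtained exactly as in the paper from \eqref{e:KAPPABEST} together with the strict monotonicity of $\Gamma$, which gives $L_1\Gamma(1,\kappa_\Sigma)+L_2\Gamma(0,1)<\gamma-\rho$. The concluding steps on the semi-distance bound and the inertial manifold property also match the paper's (more tersely stated) argument.
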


\begin{proof}
The bound \eqref{e:error_repeated} follows directly from Lemma \ref{lem:error}. The estimate \eqref{e:ESTFORWARD2M} follows from \eqref{e:error_repeated}, \eqref{e:EQUIVNS} and \eqref{e:QI-QONX}, since
$$|\eta-P_\Sigma(\tau)\eta|_{S(\tau)}\leq|(I-Q(\tau))\eta|_{S(\tau)}+|\Sigma(\tau,Q(\tau)\eta)-\Sigma(\tau,0)|_{S(\tau)}\leq(1+\kappa_\Sigma)M\norm{\eta}.$$
Recalling \eqref{e:KAPPABEST}, we have
$$L_1\Gamma(1,\kappa_\Sigma)+L_2\Gamma(0,1)<L_1\Gamma(1,\kappa_\Sigma)+L_2\Gamma\left(\frac{1}{\kappa_\Sigma},1\right) =\gamma-\rho.$$
This means that
$$
\omega = \gamma  - \frac{(\gamma-\rho)L_2\Gamma(0,1)}{\gamma-\rho-L_1\Gamma(1,\kappa_\Sigma)} > \gamma - \frac{(\gamma-\rho)L_2\Gamma(0,1)}{L_2\Gamma(0,1)} = \rho,
$$
and the assertion follows.
\end{proof}

The last property of $\{\calm(t)\colon t\in\R\}$ is the control of the growth of the solutions that lie on it. To this end, let $\eta=Q(\tau)\eta+\Sigma(\tau,Q(\tau)\eta)\in\calm(\tau)$. We define 
\begin{equation}\label{e:XI}
z(s)=\begin{cases}
T(s,\tau)\eta=Q(s)T(s,\tau)\eta+\Sigma(s,Q(s)T(s,\tau)\eta)&\ \text{if}\  s\geq\tau,\\
q_{\tau,\eta}(s)+\Sigma(s,q_{\tau,\eta}(s))=z_{\tau,\eta}(s)&\ \text{if}\  -\infty<s\leq\tau,
\end{cases}
\end{equation}
with $q_{\tau,\eta}(s)=Q(s)z_{\tau,\eta}(s)$ for $s\leq\tau$. Note that the formula for $T(s,\tau)\eta$, $s\geq\tau$, is a consequence of 
\eqref{e:ESTFORWARD}, since $\eta\in\calm(\tau)$.

\begin{lem}\label{lem:COMPLETETRAJECTORY}
The function $z\colon\R\to X$ given in \eqref{e:XI} is a unique global solution of \eqref{e:VCF} through $\eta\in\calm(\tau)$ within $\{\calm(t)\colon t\in\R\}$, that is,
$z(t)\in\calm(t)$ for $t\in\R$, $z(\tau)=\eta$ and
$$T(t,s)z(s)=z(t)\ \ \text{for every}\ \  t\geq s.$$
Moreover, $z$ has controlled growth in the past, that is, we have
\begin{equation}\label{e:XIBACKWARDS}
\norm{z(s)}_s\leq \Gamma(1,\kappa_\Sigma)e^{(\rho+L_1\Gamma(1,\kappa_\Sigma))(\tau-s)}|Q(\tau)\eta|_{N(\tau)}\ \ \text{for}\ \ s\in (-\infty,\tau],
\end{equation}
or, in the original norm of $X$,
\begin{equation*}
\norm{z(s)}\leq Mc_\Gamma \Gamma(1,\kappa_\Sigma)e^{(\rho+L_1\Gamma(1,\kappa_\Sigma))(\tau-s)}\norm{\eta}\ \ \text{for every}\ \  s\in (-\infty,\tau].
\end{equation*}
\end{lem}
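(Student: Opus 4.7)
The plan is to unpack the piecewise definition \eqref{e:XI} into four tasks: (a) check that the two branches agree at $s=\tau$ and that $z$ is continuous; (b) check the inclusion $z(s)\in \calm(s)$ for every $s\in\R$; (c) verify the cocycle identity $T(t,s)z(s)=z(t)$ for $t\geq s$; and (d) prove the growth estimate \eqref{e:XIBACKWARDS} together with uniqueness within $\{\calm(t)\colon t\in\R\}$.

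Tasks (a)--(c) are short. The two branches match at $s=\tau$ because $\eta\in\calm(\tau)$ yields $z_{\tau,\eta}(\tau)=Q(\tau)\eta+\Sigma(\tau,Q(\tau)\eta)=\eta=T(\tau,\tau)\eta$, and continuity is inherited from each piece. The inclusion $z(s)\in\calm(s)$ holds for $s\leq\tau$ by the definition of $\Sigma$ together with Lemma \ref{lem:characterization}, and for $s\geq\tau$ by the invariance proved in Lemma \ref{lem:invariance}. The cocycle identity splits into $\tau\leq s\leq t$ (cocycle property of $T$), $s\leq t\leq\tau$ (Lemma \ref{lem:lemma_solution}), and the mixed case $s\leq\tau\leq t$ handled by $T(t,s)=T(t,\tau)T(\tau,s)$.

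For the growth estimate, setting $q_{\tau,\eta}(s)=Q(s)z(s)$ for $s\leq\tau$, I would project the equation \eqref{e:VCF} satisfied by $z_{\tau,\eta}$ onto $N(t)$ to obtain
\begin{equation*}
q_{\tau,\eta}(t)=L(t,\tau)Q(\tau)\eta-\int_{t}^{\tau}L(t,s)Q(s)f(s,z(s))\,ds,\quad t\leq\tau,
\end{equation*}
estimate it in $|\cdot|_{N(t)}$ using \eqref{e:L1}, and exploit $z(s)\in\calm(s)$ via \eqref{e:SIGMALIPBETTER} with $\tilde\eta=0$ (recall $\Sigma(s,0)=0$):
\begin{equation*}
\norm{z(s)}_{s}=\Gamma(|q_{\tau,\eta}(s)|_{N(s)},|\Sigma(s,q_{\tau,\eta}(s))|_{S(s)})\leq\Gamma(1,\kappa_\Sigma)|q_{\tau,\eta}(s)|_{N(s)}.
\end{equation*}
A scalar Gronwall argument (integrating factor $e^{\rho t}$) then yields
\begin{equation*}
|q_{\tau,\eta}(t)|_{N(t)}\leq|Q(\tau)\eta|_{N(\tau)}\,e^{(\rho+L_1\Gamma(1,\kappa_\Sigma))(\tau-t)},\quad t\leq\tau,
\end{equation*}
and multiplying by $\Gamma(1,\kappa_\Sigma)$ produces \eqref{e:XIBACKWARDS}. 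The version in the original norm follows from $\norm{x}\leq c_\Gamma\norm{x}_t$ (Lemma \ref{lem:EQUIVNORMS}) and $|Q(\tau)\eta|_{N(\tau)}\leq M\norm{\eta}$ from \eqref{e:QI-QONX}.

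Uniqueness within the manifold is the most delicate step. Given any other global solution $\tilde z$ with $\tilde z(\tau)=\eta$ and $\tilde z(s)\in\calm(s)$ for all $s\in\R$, the cocycle property forces $\tilde z(s)=T(s,\tau)\eta=z(s)$ for $s\geq\tau$, so only the past needs to be pinned down. The same projection-plus-Gronwall argument applies verbatim to $\tilde z|_{(-\infty,\tau]}$, since it only used that the solution lies on $\calm(\cdot)$, and produces the same exponential bound with rate $\rho+L_1\Gamma(1,\kappa_\Sigma)$. Rewriting the gap condition \eqref{e:GAPGENERAL00} as $\Gamma(L_1/(\sigma-\rho),L_1\kappa/(\sigma-\rho))=\Gamma(L_1/(\sigma-\rho),L_2/(\gamma-\sigma))<1$ and using $\kappa_\Sigma<\kappa$ together with the monotonicity of $\Gamma$ (Definition \ref{defn:ADMISSIBLENORM}) yields $L_1\Gamma(1,\kappa_\Sigma)<\sigma-\rho$, so $\tilde z|_{(-\infty,\tau]}\in E_{\sigma,\tau}$. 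The uniqueness of the fixed point of $H_{\tau,\eta}$ from Lemma \ref{lem:fixed_point}, combined with Lemma \ref{lem:lemma_solution}, then identifies $\tilde z|_{(-\infty,\tau]}$ with $z_{\tau,\eta}$, so $\tilde z=z$. The main obstacle is precisely this last step: the $E_{\sigma,\tau}$-framework must accommodate any solution staying on the manifold, which hinges on the strict inequality $\kappa_\Sigma<\kappa$ from Corollary \ref{cor:betterlip}.
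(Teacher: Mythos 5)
Your proposal is correct and follows essentially the same route as the paper: the cocycle identity by the three-case splitting, and the growth bound by projecting onto $N(t)$, using $\Sigma(s,0)=0$ with the $\kappa_\Sigma$-Lipschitz estimate to reduce to a scalar Gronwall inequality, then passing to the original norm via $c_\Gamma$ and $M$. Your explicit uniqueness argument (Gronwall bound for any on-manifold solution, $\rho+L_1\Gamma(1,\kappa_\Sigma)<\sigma$, hence membership in $E_{\sigma,\tau}$ and identification with the fixed point of $H_{\tau,\eta}$) is a correct elaboration of what the paper leaves implicit through Lemmas \ref{lem:lemma_solution} and \ref{lem:characterization}.
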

\begin{proof}
Consider the interval $[s,t]$. If $\tau \leq s$,  then
$$T(t,s)z(s)=T(t,s)T(s,\tau)\eta=T(t,\tau)\eta=z(t).$$
On the other hand, if  $\tau\geq t$, by Lemma~\ref{lem:lemma_solution} 
we have
$$T(t,s)z(s)=z(t).$$
Finally, if  $\tau \in (s,t)$, then we have
$$T(t,s)z(s)=T(t,\tau)T(\tau,s)z(s)=T(t,\tau)z(\tau)=T(t,\tau)\eta=z(t).$$
To prove the bound, we write down the projection of $z_{\tau,\eta}(s)$ onto $N(s)$ for $s\leq\tau$ and estimate
$$e^{\rho s}|Q(s)z_{\tau,\eta}(s)|_{N(s)}\leq e^{\rho\tau}|Q(\tau)\eta|_{N(\tau)}+L_1\int_{s}^{\tau}e^{\rho r}\Gamma(|Q(r)z_{\tau,\eta}(r)|_{N(r)},|\Sigma(r,Q(r)z_{\tau,\eta}(r))|_{S(r)})dr.$$
Since $\Sigma$ is $\kappa_\Sigma$-Lipschitz, by the Gronwall inequality we get
\begin{equation*}
	|Q(s)z_{\tau,\eta}(s)|_{N(s)}\leq e^{(\rho+L_1\Gamma(1,\kappa_\Sigma))(\tau-s)}|Q(\tau)\eta|_{N(\tau)}\ \ \text{for}\ \  s\leq\tau.
\end{equation*}
We have for $s\leq\tau$
$$	\norm{z(s)}_{s}
 =\Gamma(|Q(s)z_{\tau,\eta}(s)|_{N(s)},|\Sigma(s,Q(s)z_{\tau,\eta}(s))|_{S(s)})\leq\Gamma(1,\kappa_\Sigma)|Q(s)z_{\tau,\eta}(s)|_{N(s)}.$$
Thus
$$\norm{z(s)}_s\leq\Gamma(1,\kappa_\Sigma)e^{(\rho+L_1\Gamma(1,\kappa_\Sigma))(\tau-s)}|Q(\tau)\eta|_{N(\tau)}\ \ \text{for}\ \ s\in (-\infty,\tau],$$
and the assertion follows.
\end{proof}

\begin{rem}
Note that $\rho+L_1\Gamma(1,\kappa_\Sigma)<\rho+L_1\Gamma(1,\kappa)<\sigma$, and hence the above bound gives us better control than merely the fact that the fixed point belongs to the space $E_{\sigma,\tau}$. In fact, we know now that $z_{\tau,\eta}={z}|_{(-\infty,\tau]}\in E_{\rho+L_1\Gamma(1,\kappa_\Sigma),\tau}$ and $$\norm{z_{\tau,\eta}}_{E_{\rho+L_1\Gamma(1,\kappa_\Sigma),\tau}}\leq\Gamma(1,\kappa_\Sigma)|Q(\tau)\eta|_{N(\tau)}.$$ 
We remark that $\rho+L_1\Gamma(1,\kappa_\Sigma)$ may not satisfy the gap condition as can be seen in Remark~\ref{rem:MAXIMUM} below. 
\end{rem}

\begin{proof}[Proof of Theorem~\ref{thm:MAIN1}]
We define the manifold $\{\calm(\tau)\colon \tau\in\R\}$ by \eqref{e:MANIFOLD} using functions $z_{\tau,\eta}\in E_{\sigma,\tau}$ constructed in Subsection~\ref{subsec:ZET}. It is a graph of the function $\Sigma$ given by \eqref{e:DEFNSIGMA}, which is Lipschitz continuous w.r.t. the second variable with the Lipschitz constant $\kappa_\Sigma>0$ as shown in Corollary~\ref{cor:betterlip}. Characterization \eqref{eq-characterization-inertail} follows from Lemma \ref{lem:characterization}. \color{black} The manifold is 
invariant under the evolution process $\{T(t,\tau)\colon t\geq\tau\}$ by Lemma~\ref{lem:invariance}. Corollary~\ref{cor:POSINV} establishes the bound on the distance to the manifold with the exponent $\omega$ given in \eqref{e:DEFNOMEGA}. If $\omega>0$ then the manifold is inertial. By Lemma~\ref{lem:COMPLETETRAJECTORY}, through each point ${\eta}\in\calm(\tau)$ passes 
{a unique global solution of \eqref{e:VCF}} within the manifold and its growth in the past is controlled by the exponent $\rho+L_1\Gamma(1,\kappa_\Sigma)$.
\end{proof}

\begin{rem}\label{rem:MAXIMUM}
If $\Gamma$ is the maximum norm, the conditions and constants in Theorem~\ref{thm:MAIN1} are getting simpler.
We have already observed in Remark~\ref{rem:SHARPNESS} that if $\Gamma(\cdot)=\norm{\cdot}_\infty$, then the gap condition in Definition~\ref{def:gap} means that $\sigma\in(\rho+L_1,\gamma-L_2)$. Then we have $$\kappa=\kappa(\sigma)=\frac{L_2}{L_1}\frac{\sigma-\rho}{\gamma-\sigma}\in(\kappa_{\Sigma,1},\kappa_{\Sigma,2}),$$
where 
$$\kappa_{\Sigma,1}=\frac{L_2}{\gamma-\rho-L_1}<1\ \text{and}\ \kappa_{\Sigma,2}=\frac{\gamma-\rho-L_2}{L_1}>1$$
are the only solutions of~\eqref{e:KAPPABEST}, that is,
\begin{equation*}
\gamma-\rho=L_1\Gamma(1,\kappa_{\Sigma,i})+L_2\Gamma\left(\frac{1}{\kappa_{\Sigma,i}},1\right),\ i=1,2.
\end{equation*}
Thus the Lipschitz constant in \eqref{e:SIGMALIPBETTER} is then equal to 
$$\kappa_\Sigma=\kappa_{\Sigma,1}=\frac{L_2}{\gamma-\rho-L_1}<1.$$
Moreover, by \eqref{e:XIBACKWARDS}, for any $\tau\in\R$ and ${\eta}\in\calm(\tau)$ the backward part ${z}|_{(-\infty,\tau]}=z_{\tau,{\eta}}$ of the {global solution $z$ of \eqref{e:VCF}} through $\eta$ belongs to $E_{\rho+L_1,\tau}$ and {$\norm{z}_{E_{\rho+L_1,\tau}}\leq|Q(\tau)\eta|_{N(\tau)}$.}

Furthermore, in the presence of exponential dichotomy, that is, for $\gamma>0>\rho=-\gamma$, here the gap condition reduces  to $\gamma>\frac{L_1+L_2}{2}$. Then the manifold $\{\calm(t)\colon t\in\R\}$ is inertial with $\omega>0$ if $L_2\in\left(\frac{L_2}{2},\gamma-\frac{L_1}{2}\right)$, i.e., when $L_2$ is less than $\gamma-\frac{L_1}{2}$. 
\end{rem}

\begin{rem}
	If $\Gamma(a_1,a_2) = |a_1|+|a_2|$, then  the gap condition \eqref{e:GAPGENERAL00} is equivalent to say that $\gamma-\rho > (\sqrt{L_1}+\sqrt{L_2})^2$. In such case, in order for  \eqref{e:GAPGENERAL00} to hold, one needs to take
	$
	\sigma\in (\sigma_-,\sigma_+),$ where $$\sigma_- =  \frac{\gamma+\rho-L_2+L_1 - \sqrt{(L_1+L_2-(\gamma-\rho))^2-4L_1L_2}}{2}$$ and $$\sigma_+=\frac{\gamma+\rho-L_2+L_1 + \sqrt{(L_1+L_2-(\gamma-\rho))^2-4L_1L_2}}{2}.
	$$
	Starting from such $\sigma$, and taking $\kappa(\sigma) = \frac{L_2}{L_1}\frac{\sigma-\rho}{\gamma-\sigma}$ as the initial value of the iteration described in Corollary \ref{cor:betterlip}, we arrive at the Lipschitz constant
	 $$
	 \kappa_\Sigma = \frac{2L_2}{\gamma-\rho-(L_1+L_2)+\sqrt{(L_1+L_2-(\gamma-\rho))^2-4L_1L_2}}.
	 $$
	 If $L_1=L_2=L$, then this Lipschitz constant is equal to
	 $$
	 \kappa_\Sigma = \frac{\frac{\gamma-\rho}{L}-2-\sqrt{\left(\frac{\gamma-\rho}{L}-2\right)^2-4}}{2} = \frac{2}{\frac{\gamma-\rho}{L}-2+\sqrt{\left(\frac{\gamma-\rho}{L}-2\right)^2-4}}<1.
	 $$ 
	\end{rem}

\section{Stable manifold of the invariant Manifold}\label{sec:stable}

\par We now establish the existence of a stable manifold associated with the invariant manifold. This concept was introduced in \cite{CLMO-S25} as the complementary manifold of the inertial. Here, we prove its existence based on a refined gap condition and provide a characterization of this manifold in terms of solutions exhibiting specific forward growth due to the gap condition. The proof of the following result follows the structure of the proof of Theorem \ref{thm:MAIN1} and is therefore presented concisely. Note, however, that the obtained stable manifold $\{\mathcal{S}(t)\colon t\in \R\}$ is only positively invariant while the manifold $\{\calm(t)\colon t\in \R\}$ obtained in Theorem \ref{thm:MAIN1} is fully invariant.    

\begin{thm}\label{thm-stable-manifold}
Assume that the linear abstract Cauchy problem \eqref{e:LACP} generates a linear process $\{L(t,\tau)\colon t\geq\tau\}$ on a Banach space $X$, which has the exponential splitting (Definition \ref{def:splitting}) with projections $\{Q(\tau)\colon \tau\in\R\}$ and constants $\gamma>\rho\in\R$ and $M\geq 1$.
	
	Introducing the spaces $N(\tau)=Q(\tau)X$ and $S(\tau)=(I-Q(\tau))X$ for $\tau\in\R$, endowed with the norms {$|\cdot|_{N(\tau)}$} and $|\cdot|_{S(\tau)}$ given by \eqref{e:NORMNT} and \eqref{e:NORMST}, respectively,
	and equipping $X$ with the equivalent norms $\norm{\cdot}_{\tau}$ given in \eqref{e:MOVINGNORM} for a chosen admissible norm $\Gamma$ on $\R^2$ (Definition~\ref{defn:ADMISSIBLENORM}), we further assume that a continuous function $f\colon\R\times X\to X$ satisfies \eqref{e:ZEROCONDITION} and the uniform global Lipschitz conditions  
	\eqref{e:L1} and \eqref{e:L2} with constants $L_1,L_2>0$.
	
	Consider the perturbed abstract Cauchy problem \eqref{e:PERTURBED} and assume that it generates an evolution process $\{T(t,\tau)\colon t\geq\tau\}$ in $X$, which satisfies the variation of constants formula \eqref{e:VCF}.
	Finally, assume that {the gap condition \eqref{e:GAPGENERAL00} holds, that is,}
	\begin{equation*}
	\Gamma\left(\frac{L_1}{\sigma-\rho},\frac{L_2}{\gamma-\sigma}\right)<1\ \ \text{for some}\ \ \sigma\in (\rho,\gamma).
	\end{equation*}

    Then, there exists a family of sets $\{\mathcal{S}(t)\colon t\in \R\}$, the positively invariant stable manifold complementary to $\{ \calm(t)\colon t\in \R\}$, given by 
    \begin{equation}\label{eq-stable-manifold-characterization}
    \mathcal{S}(t)=\left\{\eta\in X\colon  \sup_{r\geq {t}} \{e^{\sigma r}\|T(r,t)\eta\|_r\}<\infty\right\},\ {t\in\R}.
\end{equation}
Moreover, $\mathcal{S}(t)=P_\Theta(t)X$, 
where {$P_\Theta(t)u=\Theta(t,u)+(I-Q(t))u$ for every $(t,u)\in\R\times X$, and} $\Theta\colon\mathbb{R}\times X \to X$ is a~function such that 
\begin{enumerate}
\item $\Theta(t,u)=\Theta(t,(I-Q(t))u)=Q(t)\Theta(t,u)$, 
and $\Theta(t, 0)=0$ for every $t\in\mathbb{R}$,
\item $|\Theta(t,\eta)-\Theta(t,\tilde{\eta})|_{N(t)}\leq \kappa_\Theta|(I-Q(t))(\eta-{\tilde{\eta}})|_{S(t)}$ for every $t\in \R$ {and $\eta,\tilde{\eta}\in X$ with} a~constant $0<\kappa_\Theta < \frac{L_1}{L_2}\frac{\gamma-\sigma}{\sigma-\rho}$ satisfying
\begin{equation}\label{e:KAPPATHETA}
\gamma-\rho=L_1\Gamma\left(1,\frac{1}{\kappa_\Theta}\right)+L_2\Gamma(\kappa_\Theta,1).
\end{equation}
\end{enumerate}
In addition, we have the following controlled growth estimate 
and 
\begin{equation}\label{DecayStM}
    \|T(t,\tau)P_{\Theta}(\tau)u\|_t \leq \frac{\Gamma(\kappa_\Theta,1)}{\Gamma(0,1)}e^{-(\gamma- L_2\Gamma(\kappa_\Theta,1))(t-\tau)}\|P_{\Theta}(\tau)u\|_\tau\ \ \text{for every}\ \ t\geq \tau, \ u\in X.
\end{equation}
\end{thm}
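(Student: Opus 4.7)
The plan is to mirror the argument for Theorem~\ref{thm:MAIN1}, but forward in time and with the roles of $Q$ and $I-Q$ exchanged. First, I introduce the forward-time Banach space
\[F_{\sigma,\tau}=\Bigl\{z\in C([\tau,\infty);X)\colon \|z\|_{F_{\sigma,\tau}}=\sup_{t\geq\tau}e^{\sigma(t-\tau)}\|z(t)\|_t<\infty\Bigr\}\]
and repeat the derivation of~\eqref{eq:fixed_point}: now $(I-Q(t))z(t)$ is propagated forward from $\tau$ by the usual variation of constants, while $Q(t)z(t)$ is recovered, via invertibility of $L$ on $N$ and the bound $|L(t,r)Q(r)f(r,z(r))|_{N(t)}\leq L_1 e^{\rho(r-t)}\|z(r)\|_r$ (integrable at $+\infty$ since $\rho<\sigma$), as an improper integral from $t$ to $+\infty$. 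This motivates the fixed-point operator
\[G_{\tau,\eta}z(t)=L(t,\tau)(I-Q(\tau))\eta+\int_\tau^t L(t,r)(I-Q(r))f(r,z(r))\,dr-\int_t^\infty L(t,r)Q(r)f(r,z(r))\,dr,\]
whose contractivity on $F_{\sigma,\tau}$ with contraction constant $\Gamma(L_1/(\sigma-\rho),L_2/(\gamma-\sigma))<1$ is proved verbatim as in Lemma~\ref{lem:fixed_point}. I then set $\Theta(\tau,\eta):=Q(\tau)z_{\tau,\eta}(\tau)$ for its unique fixed point $z_{\tau,\eta}$; the algebraic properties in item~(1) are automatic because $G_{\tau,\eta}$ depends only on $(I-Q(\tau))\eta$.

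Forward analogues of Lemmas~\ref{lem:lemma_solution}--\ref{lem:characterization} then identify $z_{\tau,\eta}$ with the unique forward solution of~\eqref{e:VCF} in $F_{\sigma,\tau}$ matching $\eta$ on its stable fibre, delivering both~\eqref{eq-stable-manifold-characterization} and the graph representation $\mathcal{S}(t)=P_\Theta(t)X$. Positive invariance follows from a restriction argument: $z_{\tau,\eta}|_{[t,\infty)}\in F_{\sigma,t}$ is the fixed point of $G_{t,T(t,\tau)\eta}$, so $T(t,\tau)\eta\in\mathcal{S}(t)$. For item~(2), I would apply a cone-condition analysis symmetric to that of Section~\ref{sec:LIP}: for two fixed points $z,\tilde z$ of $G_{\tau,\eta}$, $G_{\tau,\tilde\eta}$, set $u=Q(z-\tilde z)$, $v=(I-Q)(z-\tilde z)$ and work with $\tilde\zeta(t)=|u(t)|_{N(t)}-\kappa_\Theta|v(t)|_{S(t)}$ on $[\tau,\infty)$. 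Proposition~\ref{prop:zeta_2} applies directly (and in particular its case~(iii) handles the possible merging of trajectories), and a forward growth argument paralleling Corollary~\ref{cor:ALWAYSNEGATIVE}---persistent positivity of $\tilde\zeta$ would contradict $z,\tilde z\in F_{\sigma,\tau}$---yields the Lipschitz bound with initial constant $\tfrac{L_1}{L_2}\tfrac{\gamma-\sigma}{\sigma-\rho}$. The iterative refinement of Corollary~\ref{cor:betterlip}, with the roles of the $N$ and $S$ components exchanged throughout, then converges to the sharp $\kappa_\Theta$ characterised by~\eqref{e:KAPPATHETA}.

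Finally, for the decay estimate~\eqref{DecayStM}: for $\eta=P_\Theta(\tau)u\in\mathcal{S}(\tau)$, positive invariance gives $Q(t)T(t,\tau)\eta=\Theta(t,T(t,\tau)\eta)$, whence $\|T(t,\tau)\eta\|_t\leq\Gamma(\kappa_\Theta,1)|(I-Q(t))T(t,\tau)\eta|_{S(t)}$. Inserting this into the variation-of-constants bound for the stable component and applying Gronwall produces the exponential factor $e^{-(\gamma-L_2\Gamma(\kappa_\Theta,1))(t-\tau)}$; the prefactor $\Gamma(0,1)^{-1}$ appears after using the monotonicity and homogeneity of $\Gamma$ to bound $|(I-Q(\tau))u|_{S(\tau)}\leq\|P_\Theta(\tau)u\|_\tau/\Gamma(0,1)$. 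The main obstacle throughout is the absence of backward invertibility of the nonlinear process: it necessitates the finer Proposition~\ref{prop:zeta_2} in place of Proposition~\ref{prop:zeta}, and it is responsible for $\mathcal{S}$ being only positively, rather than fully, invariant, in contrast to the manifold $\mathcal{M}$ of Theorem~\ref{thm:MAIN1}.
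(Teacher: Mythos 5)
Your proposal is correct and follows essentially the same route as the paper's proof: the forward space $F_{\sigma,\tau}$, the same fixed-point operator and contraction constant, $\Theta$ defined as the $Q$-component of the fixed point, the characterization and positive invariance by restriction, the cone condition handled via Proposition~\ref{prop:zeta_2} with the remaining case excluded by a forward-growth contradiction as in Corollary~\ref{cor:ALWAYSNEGATIVE}, the iterative refinement to the constant satisfying \eqref{e:KAPPATHETA}, and the same Gronwall argument for \eqref{DecayStM}. The only harmless slip is using $\kappa_\Theta$ as the cone parameter in $\tilde\zeta$, whereas the initial cone argument runs with $1/\kappa=\frac{L_1}{L_2}\frac{\gamma-\sigma}{\sigma-\rho}$, which is indeed the initial Lipschitz constant you state before the refinement.
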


\begin{proof}
Existence and properties of  $\Theta$ are obtained using the argument which is complementary to the proof of Theorem \ref{thm:MAIN1} for existence of $\Sigma$.
First, choose $\sigma\in (\rho,\gamma)$ such that the gap condition \eqref{e:GAPGENERAL00} holds. Suppose that $\eta\in X$ and $\tau\in \R$ are such that $\sup_{t\geq \tau}\{e^{\sigma t}\|T(t,\tau)\eta\|_t\}$ is bounded.
Then, if $z(t)=T(t,\tau)\eta$ we denote $q(t):=Q(t)z(t) $ and $p(t):=(I-Q(t))z(t)$. Our aim is to construct the function  $\Theta\colon\mathbb{R}\times X \to X$ such that $q(t)=\Theta(t,p(t))$ for all $t\geq \tau$. The function $q$ satisfies the following Duhamel formula
\begin{equation*}
q(r)=L(r,t)q(t)+\int_{t}^{r}L(r,s)Q(s)f(s,z(s))ds\ \ \text{for}\ \ r\geq t\geq\tau.
\end{equation*}
Applying the inverse operator $L(t,r)$ to the above equation, we obtain
\begin{equation*}
q(t)=L(t,r)q(r)-\int_{t}^{r}L(t,s)Q(s)f(s,z(s))ds\ \ \text{for}\ \ r\geq t\geq\tau.
\end{equation*}
Since $e^{\sigma r}\|z(r)\|_r$ is bounded for $r\geq t$, we deduce that $e^{\rho r}\|z(r)\|_r\to 0$ as $r\to \infty$, whence
\begin{equation*}
    q(t)=-\int_t^{\infty} L(t,s)Q(s)f(s,z(s))ds,\  t\geq\tau.
\end{equation*}
Therefore, if we are able to find a continuous function $z\colon[\tau,+\infty)\to X$ satisfying 
\begin{equation*}
    z(t)=-\int_t^{\infty} L(t,s)Q(s)f(s,z(s))ds
+L(t,\tau)(I-Q(\tau))\eta +\int_{\tau}^{t} L(t,s)(I-Q(s))f(s,z(s))ds,
\end{equation*}
then, analogously to Lemma \ref{lem:lemma_solution}, {$z$ is a solution of \eqref{e:VCF} on $[\tau,\infty)$} with $(I-Q(\tau))z(\tau)={(I-Q(\tau))\eta}$, and the candidate for the sought function $\Theta$ is given as $\Theta(\tau,\eta):=Q(\tau)z(\tau)$. Then, if we define the nonlinear projection as $P_\Theta(\tau)\eta=\Theta(\tau,\eta)+(I-Q(\tau))\eta$, the image of this projection $P_\Theta(\tau) X$ is the complementary manifold of the invariant manifold $\mathcal{M}(\tau)$. 
Indeed, immediately, we get 
$\Theta(\tau,{(I-Q(\tau))\eta})=\Theta(\tau,\eta)=Q(\tau)\Theta(\tau,\eta)$. 

The function $z$ is constructed using a fixed point argument, which follows the lines of the argument of Subsection \ref{subsec:ZET}.
 Given $\tau\in \mathbb{R}$ and $\sigma\in (\rho,\gamma)$ such that \eqref{e:GAPGENERAL00} holds we define the space
\begin{equation*}
    F_{\sigma,\tau}=\{z\in C([\tau,{\infty});X)\colon  \|z\|_{F_{\sigma,\tau}}=\sup_{t\geq \tau} e^{\sigma (t-\tau)}\|z(t)\|_t<\infty\},
\end{equation*}
which is a Banach space with
the norm $\|\cdot\|_{F_{\sigma,\tau}}$.
Choosing $\eta\in X$, $z\in F_{\sigma,\tau}$ we define 
\begin{equation*}
(Hz)(t)=-\int_t^{\infty} L(t,s)Q(s)f(s,z(s))ds
 +L(t,\tau)(I-Q(\tau))\eta+\int_{\tau}^{t} L(t,s)(I-Q(s))f(s,z(s))ds
\end{equation*}
for $t\in [\tau,\infty)$. We prove that $H$ is a contraction from $F_{\sigma,\tau}$ on itself. To this end, we must estimate the quantity 
$$
\|(Hz)(t)\|_t=\Gamma(|Q(t)(Hz)(t)|_{N(t)},|(I-Q(t))(Hz)(t)|_{S(t)}),
$$
where 
$$
Q(t)(Hz)(t) = -\int_t^{\infty} L(t,s)Q(s)f(s,z(s))ds,
$$
and 
$$
(I-Q(t))(Hz)(t) = L(t,\tau)(I-Q(\tau))\eta+\int_{\tau}^{t} L(t,s)(I-Q(s))f(s,z(s))ds.
$$
We have
\begin{equation*}
       \left|\int_t^{\infty} L(t,s)Q(s)f(s,z(s))ds\right|_{N(t)}
   \leq L_1\int_t^{{\infty}} e^{\rho(s-t)}\|z(s)\|_s\, ds
   \leq \|z\|_{F_{\sigma,\tau}}L_1\int_t^{{\infty}} e^{\rho(s-t)-\sigma (s-\tau)}\, ds,
\end{equation*}
and
\begin{equation*}
\begin{split}
|(I-Q(t))(Hz)(t)|_{S(t)}&\leq |L(t,\tau)(I-Q(\tau))\eta|_{S(t)} +\int_\tau^t |L(t,s)(I-Q(s))f(s,z(s))|_{S(t)}ds\\
&\leq e^{\gamma(\tau-t)}|(I-Q(\tau))\eta|_{S(t)} + L_2\int_\tau^t  e^{\gamma(s-t)}\|z(s)\|_s\,ds\\
&\leq e^{\gamma(\tau-t)}|(I-Q(\tau))\eta|_{S(t)} +L_2\|z\|_{F_{\sigma,\tau}}\int_\tau^t  e^{\gamma(s-t)-\sigma (s-\tau)}\,ds.
\end{split}
\end{equation*}
Hence, for $t\geq \tau$, 
\begin{equation*}
        \begin{aligned}
            e^{\sigma (t-\tau)}\|(Hz)(t)\|_t&{\leq}\Gamma\bigg(\|z\|_{F_{\sigma,\tau}}\frac{L_1}{\sigma-\rho}, e^{(t-\tau)(\sigma-\gamma)}|{(I-Q(\tau))\eta}|_{S(\tau)}+\|z\|_{F_{\sigma,\tau}}\frac{L_2(1-e^{(\sigma-\gamma)(t-\tau)})}{\gamma-\sigma}\bigg),\\
            &\leq \Gamma\bigg(\|z\|_{F_{\sigma,\tau}}\frac{L_1}{\sigma-\rho}, |(I-Q(\tau))\eta|_{S(\tau)}+\|z\|_{F_{\sigma,\tau}}\frac{L_2}{\gamma-\sigma}\bigg).
        \end{aligned}
    \end{equation*}
{Therefore, we have} $Hz\in F_{\sigma,\tau}$.
Similar computations lead to 
\begin{equation*}
            e^{\sigma (t-\tau)}\|(Hz_1)(t)-(Hz_2)(t)\|_t\leq \|z_1-z_2\|_{F_{\sigma,\tau}}\Gamma\bigg(\frac{L_1}{\sigma-\rho},\frac{L_2}{\gamma-\sigma}\bigg)
    \end{equation*}
for $z_1,z_2\in F_{\sigma,\tau}$, and from the gap condition \eqref{e:GAPGENERAL00}, the operator $H$ has a unique fixed point $z^*\in F_{\sigma,\tau}$.
Define
\begin{equation*}
\Theta(\tau,\eta ) :=-\int_\tau^\infty L(\tau,s)Q(s)f(s,z^*(s)) ds\ \ \text{for every}\ \   (\tau,\eta)\in \R\times X.
\end{equation*}
An argument that follows the lines of {Lemma~\ref{lem:characterization}} 
implies that $P_\Theta(t) X = \mathcal{S}(t)$ for every $t\in \R$ with $\mathcal{S}(t)$ given by \eqref{eq-stable-manifold-characterization}, and the family $\{S(t)\colon t\in \R\}$ is positively invariant. 

To prove Lipschitzness, pick $\tau\in \R$, $\eta, \tilde \eta \in X$ and consider solutions $z, \tilde{z}:[\tau,\infty)\to X$ such that $z(\tau) = P_\Theta(\tau)\eta$ and $\tilde z(\tau) = P_\Theta(\tau)\tilde \eta$. Picking $\kappa=\frac{L_2}{L_1}\frac{\sigma-\rho}{\gamma-\sigma}$ as in \eqref{e:DEFNKAPPA}
we will 
study the function $\zeta(t):=|(I-Q(t))(z(t)-\tilde z(t))|_{S(t)}-\kappa|Q(t)(z(t)-\tilde z(t))|_{N(t)} = |v(t)|_{S(t)}-\kappa |u(t)|_{N(t)}$ and we will prove that $\zeta(\tau)\geq 0$, i.e., 
$$
|\Theta(\tau,\eta)-\Theta(\tau,\tilde  \eta)|_{N(t)} \leq \frac{1}{\kappa}|(I-Q(\tau))(\eta-\tilde \eta)|_{S(t)}.
$$
Without loss of generality we may assume that {$(I-Q(\tau))(\eta-\tilde \eta) \neq 0$}. Then, by Proposition \ref{prop:zeta_2} we only have to exclude the case where $\zeta(t)<0$ for every $t\in [\tau,\infty)$. We skip the argument to obtain this assertion as it is based on the standard estimate for the equation 
$$
u(t)=L(t,s)u(s)-\int_t^s L(t,r)Q(r)(f(r,z(r))-f(r,\tilde z(r))\, dr\ \ \text{for}\ \ \tau \leq t \leq s < \infty
$$
and closely follows the proof of Corollary \ref{cor:ALWAYSNEGATIVE}. 

Following the lines of the argument of {Theorem~\ref{thm:betterlip} and} Corollary \ref{cor:betterlip}, the Lipschitz constant can be refined, so that the Lipschitz constant $0<\kappa_\Theta<\frac{1}{\kappa}$ can be made to satisfy
\begin{equation*}
    \frac{L_1\Gamma(\kappa_\Theta,1)}{\gamma-\rho-L_2\Gamma(\kappa_\Theta,1)}=\kappa_\Theta.
\end{equation*}


    
Finally, we prove \eqref{DecayStM}. Let $u\in X$ and let $\eta = P_\Theta(\tau)u\in \mathcal{S}(\tau)$. 
    Denoting $p(t)=(I-Q(t))T(t,\tau)\eta$ for every $t\geq \tau$, we have the estimate
    \begin{equation*}
        |p(t)|_{S(t)}\leq e^{\gamma(\tau-t)}|(I-Q(\tau))\eta|_{{S(\tau)}}
        +L_2\Gamma(\kappa_\Theta,1)
        \int_\tau^t e^{\gamma(r-t)}|p(r)|_{S(r)} \, dr.
    \end{equation*}
    Applying the Gronwall inequality 
    to the last bound, we obtain 
     \begin{equation*}
|p(t)|_{S(t)}\leq e^{-(\gamma-L_2\Gamma(\kappa_\Theta,1))(t-\tau)}|(I-Q(\tau))\eta|_{S(\tau)}, \ t\geq \tau.
\end{equation*}
Since $\|T(t,\tau)\eta\|_t=\|P_\Theta(t)p(t)\|_t=
\Gamma(|\Theta(t,p(t))|_{N(t)},|p(t)|_{S(t)})\leq \Gamma(\kappa_\Theta,1)|p(t)|_{S(t)}$,
we obtain \eqref{DecayStM} and the proof is complete.
\end{proof}

The above result establishes the existence of a stable manifold associated with the zero solution. However, as noted in \cite{CLMO-S25}, it is also possible to prove the existence of stable manifolds associated with any global solution lying on the inertial manifold. More precisely, since $\mathcal{M}$ can be characterized as the union of global solutions, one can construct, for each such solution, a corresponding stable manifold, as we describe in the following definition.

\begin{defn} 
Under the conditions of Theorem  \ref{thm-stable-manifold}, there exists an invariant manifold $\{\mathcal{M}(t)\colon t\in\R\}$. 
Let $\xi\colon\R\to X$ be a global solution such that  
$\xi(t)\in \mathcal{M}(t)$ for every $t\in \R$.
The \emph{stable manifold of the invariant manifold along $\xi$} is a family $\{\mathcal{S}_\xi(t)\colon t\in\R\}$ such that 
	\begin{enumerate}
		\item $\mathcal{S}_\xi(t)$ is a Lipschitz manifold for each $t\in \mathbb{R}$;
		\item $\{\mathcal{S}_\xi(t)\colon t\in \mathbb{R}\}$ is positively invariant, i.e., $T(t,\tau)\mathcal{S}_\xi(\tau)\subset \mathcal{S}_\xi(t)$, for $t\geq \tau$;
		\item there exists $\delta\in (\rho,\gamma)$ such that, for every $\tau \in \R$ and $\eta \in S_\xi(\tau)$, there exists $K = K(\tau, \eta) > 0$ such that
  $$\|T(t,\tau) \eta-\xi(t)\|\leq Ke^{-\delta(t-\tau)}, \ t\geq \tau .$$ 
\end{enumerate}
\end{defn}

As described in \cite[Remark 2.4]{CLMO-S25}, it is possible to use Theorem \ref{thm-stable-manifold} to obtain the stable manifold along any global solution in the inertial manifold. 
Indeed, for a given global solution $\xi\colon\R\to X$, define 
$$\tilde{f}(t,v)=f(t,v+\xi(t))-f(t,\xi(t)), \ (t,v)\in \R\times X.$$
Then $\tilde{f}$ is globally Lipschitz and $\tilde{f}(t,0)=0$ for all $t\in \R$, and apply Theorem \ref{thm-stable-manifold} to obtain the stable manifold $\mathcal{S}_{0,\tilde{f}}$ along zero solution for $\tilde{f}$. 
Then the stable manifold $\mathcal{S}_{\xi,f}$ along $\xi$ for the problem with $f$ is obtained by $\mathcal{S}_{\xi,f}(t)=\xi(t)+\mathcal{S}_{0,\tilde{f}}(t)$ for all $t\in \R$.

\begin{rem}
Continuing the considerations of Remark~\ref{rem:MAXIMUM} when $\Gamma$ is the maximum norm on $\R^2$, we see that
$\kappa_{\Theta,i}=\frac{1}{\kappa_{\Sigma,i}}$, $i=1,2$, are the only solutions of~\eqref{e:KAPPATHETA}, that is,
\begin{equation*}
\gamma-\rho=L_1\Gamma\left(1,\frac{1}{\kappa_{\Theta,i}}\right)+L_2\Gamma\left(\kappa_{\Theta,i},1\right),\ i=1,2.
\end{equation*}
Since $\frac{1}{\kappa}\in(\kappa_{\Theta,2},\kappa_{\Theta,1})$, we deduce that 
the Lipschitz constant in Theorem~\ref{thm-stable-manifold} is then equal to 
$$\kappa_\Theta=\kappa_{\Theta,2}=\frac{L_1}{\gamma-\rho-L_2}<1.$$
Moreover, for any $\tau\in\R$ and $\eta\in\cals(\tau)$ the solution $z(t)=T(t,\tau)\eta$ for $t\geq\tau$ belongs to $F_{\gamma-L_2,\tau}$ and $\norm{z}_{F_{\gamma-L_2,\tau}}\leq|(I-Q(\tau))\eta|_{S(\tau)}$.
\end{rem}

\section{Differentiable invariant and inertial manifolds}\label{sec:diff}

We maintain the assumptions of the previous section. In this setting we prove in this section that the function $\Sigma(t,\eta)$ that defines the inertial manifold  is differentiable with respect to the variable $\eta$ provided that $f(t,\cdot)$ is Fr\'echet differentiable.
To this end, we additionally assume that $f\colon\R\times X\to X$ is Fr\'echet differentiable w.r.t. the second variable and we denote its Fr\'echet derivative $Df(t,u)\in\mathcal{L}(X,X)$, $(t,u)\in\R\times X$, that is,
\begin{equation}\label{e:FRECHET1}
	f(t,u+h)-f(t,u)-Df(t,u)h=\eps(t,u,h)\norm{h}_t\ \ \text{for}\ \ \ t\in\R,\ u,h\in X,
\end{equation}
where $\eps\colon\R\times X\times X\to X$ satisfies
\begin{equation}\label{e:FRECHET2}
	\eps(t,u,h)\to\eps(t,u,0)=0\text{ as }h\to 0\text{ for each }t\in\R,\ u\in X.
\end{equation}
The choice of the norm $\norm{\cdot}_t$ on the right-hand side of \eqref{e:FRECHET1} is for our convenience only as it is equivalent to $\norm{\cdot}$ by Lemma~\ref{lem:EQUIVNORMS}. 

\begin{thm}\label{thm:MAIN2}
Under assumptions of Theorem~\ref{thm:MAIN1} and \eqref{e:FRECHET1}, \eqref{e:FRECHET2}, the function $\Sigma\colon\R\times X\to X$, given in \eqref{e:DEFNSIGMA}, that defines the manifold $\{\calm(t)\colon t\in\R\}$, is differentiable w.r.t. the second variable and
\begin{equation}\label{e:DERSIGMA}
D_\eta\Sigma(\tau,\eta)=\int_{-\infty}^\tau L(\tau,s)(I-Q(s))Df(s,z_{\tau,\eta}(s))D_\eta[z_{\tau,\eta}(s)]ds,\ \tau\in\R,\ \eta\in X.
\end{equation}
\end{thm}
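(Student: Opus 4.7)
The strategy is to differentiate the fixed point relation $z_{\tau,\eta}=H_{\tau,\eta}z_{\tau,\eta}$ with respect to $\eta$. Since the only dependence of $H_{\tau,\eta}$ on $\eta$ is through the affine term $L(t,\tau)Q(\tau)\eta$, the formal derivative of the map $\eta\mapsto z_{\tau,\eta}$ applied to $h\in X$ should be the unique fixed point $W_{\tau,\eta}h\in E_{\sigma,\tau}$ of the linearized operator
\begin{equation*}
G_{\tau,\eta,h}w(t)=L(t,\tau)Q(\tau)h-\int_t^\tau L(t,s)Q(s)Df(s,z_{\tau,\eta}(s))w(s)ds+\int_{-\infty}^t L(t,s)(I-Q(s))Df(s,z_{\tau,\eta}(s))w(s)ds.
\end{equation*}
The first step is therefore to observe that, as pointwise limits of difference quotients, the Fr\'echet derivative $Df(s,u)\in\mathcal{L}(X)$ inherits the split Lipschitz bounds \eqref{e:L1} and \eqref{e:L2}, namely $|Q(s)Df(s,u)h|_{N(s)}\leq L_1\|h\|_s$ and $|(I-Q(s))Df(s,u)h|_{S(s)}\leq L_2\|h\|_s$. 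Consequently the computation of Lemma~\ref{lem:fixed_point} applies verbatim to $G_{\tau,\eta,h}$: it is a contraction on $E_{\sigma,\tau}$ with the \emph{same} constant $\Gamma(L_1/(\sigma-\rho),L_2/(\gamma-\sigma))<1$. This yields the unique fixed point $W_{\tau,\eta}h\in E_{\sigma,\tau}$, linear in $h$ by uniqueness, and bounded.

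Next I will show that $W_{\tau,\eta}$ is indeed the Fr\'echet derivative of $\eta\mapsto z_{\tau,\eta}$ as a map $X\to E_{\sigma,\tau}$. Write $R_h:=z_{\tau,\eta+h}-z_{\tau,\eta}-W_{\tau,\eta}h\in E_{\sigma,\tau}$. Substituting the three fixed point identities and using \eqref{e:FRECHET1} in the form
\begin{equation*}
f(s,z_{\tau,\eta+h}(s))-f(s,z_{\tau,\eta}(s))=Df(s,z_{\tau,\eta}(s))(z_{\tau,\eta+h}(s)-z_{\tau,\eta}(s))+\varepsilon(s,z_{\tau,\eta}(s),\Delta_h(s))\|\Delta_h(s)\|_s,
\end{equation*}
where $\Delta_h(s):=z_{\tau,\eta+h}(s)-z_{\tau,\eta}(s)$, one obtains
\begin{equation*}
R_h=-\int_t^\tau L(t,s)Q(s)\Psi_h(s)ds+\int_{-\infty}^t L(t,s)(I-Q(s))\Psi_h(s)ds+\text{remainder},
\end{equation*}
with $\Psi_h(s)=Df(s,z_{\tau,\eta}(s))R_h(s)+\varepsilon(s,z_{\tau,\eta}(s),\Delta_h(s))\|\Delta_h(s)\|_s$. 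The same contraction estimate as in Lemma~\ref{lem:fixed_point} applied to the first two integrals absorbs the $R_h$ part, yielding
\begin{equation*}
\|R_h\|_{E_{\sigma,\tau}}\leq \frac{1}{1-\Gamma(L_1/(\sigma-\rho),L_2/(\gamma-\sigma))}\,\mathcal{E}_\tau(h),
\end{equation*}
where $\mathcal{E}_\tau(h)$ collects the $\varepsilon$-contributions.

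The main obstacle is to verify that $\mathcal{E}_\tau(h)=o(\|h\|)$ as $h\to 0$. One first shows that $h\mapsto z_{\tau,\eta+h}-z_{\tau,\eta}$ is Lipschitz from $X$ to $E_{\sigma,\tau}$ (again by the contraction argument with the $\eta$-parameter), so $\|\Delta_h\|_{E_{\sigma,\tau}}\leq C\|h\|$. Then, using \eqref{e:FRECHET2} together with the weights $e^{\sigma(s-\tau)}$ and the estimates $e^{\rho(s-t)}$, $e^{\gamma(s-t)}$ coming from the exponential splitting, an application of dominated convergence (the integrands are bounded by an integrable exponential envelope uniformly in $h$, and tend to $0$ pointwise by \eqref{e:FRECHET2}) produces $\mathcal{E}_\tau(h)/\|h\|\to 0$. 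This proves $\eta\mapsto z_{\tau,\eta}\in E_{\sigma,\tau}$ is Fr\'echet differentiable with derivative $W_{\tau,\eta}$.

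Finally, since evaluation at $t=\tau$ and application of $I-Q(\tau)$ are bounded linear operations, the formula
\begin{equation*}
\Sigma(\tau,\eta)=(I-Q(\tau))z_{\tau,\eta}(\tau)=\int_{-\infty}^\tau L(\tau,s)(I-Q(s))f(s,z_{\tau,\eta}(s))ds
\end{equation*}
gives $D_\eta\Sigma(\tau,\eta)h=(I-Q(\tau))(W_{\tau,\eta}h)(\tau)$. Using the fixed point equation for $W_{\tau,\eta}h$ at $t=\tau$, the $Q(\tau)$-integral vanishes and the $L(\tau,\tau)Q(\tau)h$ term is killed by $I-Q(\tau)$, leaving exactly the formula \eqref{e:DERSIGMA}, with $D_\eta[z_{\tau,\eta}(s)]=(W_{\tau,\eta}\,\cdot\,)(s)$.
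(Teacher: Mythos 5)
Your overall architecture is the same as the paper's: you linearize the Lyapunov--Perron fixed point equation, observe that $Df$ inherits the split bounds \eqref{e:L1}--\eqref{e:L2} (the paper's \eqref{e:L1DF}--\eqref{e:L2DF}), note that the linearized operator is a contraction on the same weighted space with the same constant $\Gamma\bigl(\tfrac{L_1}{\sigma-\rho},\tfrac{L_2}{\gamma-\sigma}\bigr)<1$, and then try to show that the remainder $R_h$ is $o(\norm{h})$ by absorbing its linear part and estimating the $\eps$-terms. The final passage from differentiability of $\eta\mapsto z_{\tau,\eta}$ to \eqref{e:DERSIGMA} is also as in the paper. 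The problem is the step you label the main obstacle: your argument for $\mathcal{E}_\tau(h)=o(\norm{h})$ does not work as stated. You only use the Lipschitz bound $\norm{\Delta_h(s)}_s\leq C\norm{h}e^{\sigma(\tau-s)}$, and after cancelling weights the $\eps$-contribution to $\norm{R_h}_{E_{\sigma,\tau}}/\norm{h}$ is of the form $\sup_{t\leq\tau}\bigl[\int_t^\tau e^{(\sigma-\rho)(t-s)}g_h(s)\,ds+\int_{-\infty}^t e^{(\gamma-\sigma)(s-t)}\tilde g_h(s)\,ds\bigr]$ with $g_h,\tilde g_h$ bounded (by $2L_1$, $2L_2$) and converging to $0$ only pointwise in $s$. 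Dominated convergence handles each fixed $t$, but the supremum runs over the noncompact range $t\leq\tau$, the exponential envelopes do not decay as $t\to-\infty$, and the pointwise smallness of $\eps(s,z_{\tau,\eta}(s),\Delta_h(s))$ is not uniform in $s$: for $s\to-\infty$ the bound on $\Delta_h(s)$ blows up like $e^{\sigma(\tau-s)}$, so $g_h(s)$ can stay of size $2L_1$ however small $h$ is. A bounded, pointwise-null family (e.g.\ $g_h\approx 2L_1\mathds{1}_{\{s\leq -1/\norm{h}\}}$) makes the supremum stay bounded away from $0$, so the claimed convergence is not a consequence of the facts you invoke.

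This is exactly the point the paper spends most of its proof on. It first proves the sharper difference estimate \eqref{e:CONTINUITYPSIT}, $\norm{\psi(\eta+h)(s)-\psi(\eta)(s)}_s\leq\Gamma(1,\kappa_\Sigma)|Q(\tau)h|_{N(\tau)}e^{(\rho+L_1\Gamma(1,\kappa_\Sigma))(\tau-s)}$, using the $\kappa_\Sigma$-Lipschitzness of $\Sigma$ and Gronwall; since $\rho+L_1\Gamma(1,\kappa_\Sigma)<\sigma$, the resulting envelope for the $\eps$-terms decays like $e^{(\sigma-\rho-L_1\Gamma(1,\kappa_\Sigma))(t-\tau)}$ as $t\to-\infty$ uniformly in $h$, which is what rules out escape of the supremum to $-\infty$ (the contradiction argument with the sequence $t_n$) and reduces matters to a compact $t$-interval, where dominated convergence together with \eqref{e:FRECHET2} applies. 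To repair your proof you would either have to prove this improved growth estimate for $\Delta_h$ first, or measure the remainder in a weaker space $E_{\mu,\tau}$ with $\mu\in(\sigma,\gamma)$ still satisfying the gap condition (the trick the paper itself uses in Section~\ref{sec:c1} for the $C^1$ statement), so that a factor $e^{(\mu-\sigma)(t-\tau)}$ supplies the missing tail decay; as written, your dominated-convergence step is a genuine gap.
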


The above result will be proved in the following part of this section.

We first observe that from \eqref{e:L1}, \eqref{e:L2} and \eqref{e:FRECHET1}  it follows that for $t\in\R$ and $u,w\in X$ we have
\begin{equation}\label{e:L1DF}
	|Q(t)Df(t,u)w|_{N(t)}\leq L_1\norm{w}_{t},
\end{equation}
\begin{equation}\label{e:L2DF}
	|(I-Q(t))Df(t,u)w|_{S(t)}\leq L_2\norm{w}_{t}.
\end{equation}
Indeed, writing \eqref{e:FRECHET1} with $h=\Gamma w$ for an arbitrary $\Gamma>0$, we get by \eqref{e:L1} and \eqref{e:EQUIVNS}
$$\Gamma|Q(t)Df(t,u)w|_{N(t)}\leq\Gamma L_1\norm{w}_{t}+\Gamma M\norm{w}_t\norm{\eps(t,u,\Gamma w)}.$$
Dividing by $\Gamma$ and passing with $\Gamma\to 0$, we obtain \eqref{e:L1DF} by \eqref{e:FRECHET2}. The proof of \eqref{e:L2DF} is analogous. We also have
\begin{equation*}
	|Q(t)\eps(t,u,h)|_{N(t)}\leq 2L_1\ \ \text{for}\ \ t\in\R,\ u,h\in X,
\end{equation*}
\begin{equation*}
	|(I-Q(t))\eps(t,u,h)|_{S(t)}\leq 2L_2\ \ \text{for}\ \ \ t\in\R,\ u,h\in X.
\end{equation*}

Let us fix $\tau\in\R$. Given $\eta\in X$, under the conditions of Section~\ref{sec:LIP} with $\sigma\in(\rho,\gamma)$ satisfying the gap condition \eqref{e:GAPGENERAL00}, we showed that $H_{\tau,\eta}\colon E_{\sigma,\tau}\to E_{\sigma,\tau}$ defined by
\begin{equation*}
\begin{split}
H_{\tau,\eta}z(t)=L(t,\tau)Q(\tau)\eta&-\int_{t}^{\tau}L(t,s)Q(s)f(s,z(s))ds\\
&+\int_{-\infty}^{t}L(t,s)(I-Q(s))f(s,z(s))ds\ \ \text{for}\ \  t\in (-\infty,\tau],\ z\in E_{\sigma,\tau},
\end{split}
\end{equation*}
is a contraction on the Banach space 
$$E_{\sigma,\tau}=\{z\in C((-\infty,\tau];X)\colon \norm{z}_{E_{\sigma,\tau}}=\sup_{t\leq\tau}e^{\sigma(t-\tau)}\norm{z(t)}_{t}<\infty\},$$
hence it has a unique fixed point $z_{\tau,\eta}\in E_{\sigma,\tau}$, which defines the function $\psi_\tau\colon X\to E_{\sigma,\tau}$ by
$$\psi_\tau(\eta)=z_{\tau,\eta}\ \ \text{for}\ \ \eta\in X.$$
In the sequel we will drop the subscript $\tau$ in functions $\psi_\tau$, $z_{\tau,\eta}$ and the space $E_{\sigma,\tau}$, since we will be interested in the differentiability with respect to $\eta$.

Before we pass to the main result of this section we prove several preparatory estimates.
\begin{lem}
We have the following estimates valid for $\eta, h \in X$ and $t\leq \tau$:
\begin{align}\label{e:PSIONNT}
&|Q(t)[\psi(\eta+h)(t)-\psi(\eta)(t)]|_{N(t)}\leq|Q(\tau)h|_{N(\tau)}e^{(\rho+L_1\Gamma(1,\kappa_\Sigma))(\tau-t)},\\
&\label{e:PSIONST}
|(I-Q(t))[\psi(\eta+h)(t)-\psi(\eta)(t)]|_{S(t)}\leq\kappa_\Sigma|Q(\tau)h|_{N(\tau)}e^{(\rho+L_1\Gamma(1,\kappa_\Sigma))(\tau-t)},\\
&\label{e:CONTINUITYPSIT}
\norm{\psi(\eta+h)(t)-\psi(\eta)(t)}_{t}\leq\Gamma\left(1,\kappa_\Sigma\right)|Q(\tau)h|_{N(\tau)}e^{(\rho+L_1\Gamma(1,\kappa_\Sigma))(\tau-t)}.
\end{align}
\end{lem}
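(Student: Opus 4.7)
The plan is to mimic the argument used in Theorem~\ref{thm:betterlip} (which itself relies on Lemma~\ref{lem:NEG}), only now the comparison is between two fixed points $z=\psi(\eta)$ and $\tilde z=\psi(\eta+h)$ rather than between two arbitrary points on the manifold. Set
$$u(t)=Q(t)[\tilde z(t)-z(t)],\qquad v(t)=(I-Q(t))[\tilde z(t)-z(t)],\quad t\leq\tau.$$
By Lemma~\ref{lem:characterization} both $z(s)$ and $\tilde z(s)$ belong to $\calm(s)$ for every $s\leq\tau$, so $v(s)=\Sigma(s,\tilde z(s))-\Sigma(s,z(s))$ and hence, by Corollary~\ref{cor:betterlip},
$$|v(s)|_{S(s)}\leq \kappa_\Sigma|u(s)|_{N(s)}.$$
Note also that $Q(\tau)(\tilde z(\tau)-z(\tau))=Q(\tau)(\eta+h)-Q(\tau)\eta=Q(\tau)h$.

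First I would subtract the $Q$-projected fixed point equations for $z$ and $\tilde z$ (obtained by applying $Q(t)$ to the defining equation of $H_{\tau,\eta}$), getting
$$u(t)=L(t,\tau)Q(\tau)h-\int_t^\tau L(t,s)Q(s)[f(s,\tilde z(s))-f(s,z(s))]\,ds,\quad t\leq\tau.$$
Using the bound on $L(t,\tau)|_{N(\tau)}$, assumption \eqref{e:L1}, monotonicity of $\Gamma$ and the above $\kappa_\Sigma$-Lipschitz relation between $|v|_{S}$ and $|u|_{N}$, I obtain
$$|u(t)|_{N(t)}\leq e^{\rho(\tau-t)}|Q(\tau)h|_{N(\tau)}+L_1\Gamma(1,\kappa_\Sigma)\int_t^\tau e^{\rho(s-t)}|u(s)|_{N(s)}\,ds.$$
An application of the Gronwall inequality then yields \eqref{e:PSIONNT} directly (this is the same computation that produced \eqref{e:ESTU}).

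Once \eqref{e:PSIONNT} is in hand, estimate \eqref{e:PSIONST} is immediate: by the $\kappa_\Sigma$-Lipschitz property of $\Sigma$,
$$|v(t)|_{S(t)}\leq \kappa_\Sigma|u(t)|_{N(t)}\leq \kappa_\Sigma|Q(\tau)h|_{N(\tau)}e^{(\rho+L_1\Gamma(1,\kappa_\Sigma))(\tau-t)}.$$
Finally, \eqref{e:CONTINUITYPSIT} follows from the definition of $\|\cdot\|_t$ and the strict monotonicity of $\Gamma$ in each argument, since
$$\|\psi(\eta+h)(t)-\psi(\eta)(t)\|_t=\Gamma(|u(t)|_{N(t)},|v(t)|_{S(t)})\leq\Gamma(|u(t)|_{N(t)},\kappa_\Sigma|u(t)|_{N(t)})=\Gamma(1,\kappa_\Sigma)|u(t)|_{N(t)},$$
and then \eqref{e:PSIONNT} finishes the proof. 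There is no genuine obstacle here; the only subtle point is that one must exploit $\tilde z(s),z(s)\in\calm(s)$ (so that $v$ is controlled by $u$ via $\kappa_\Sigma$) \emph{before} applying Gronwall, which is what replaces the generic bound $\Gamma(|u|_{N(s)},|v|_{S(s)})$ by $\Gamma(1,\kappa_\Sigma)|u(s)|_{N(s)}$ and yields the cleaner exponent $\rho+L_1\Gamma(1,\kappa_\Sigma)$ rather than a coarser one.
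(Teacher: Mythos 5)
Your argument is correct, and for \eqref{e:PSIONNT} and \eqref{e:CONTINUITYPSIT} it is the paper's proof: subtract the $Q$-projected fixed point equations, use that $z_{\tau,\eta}(s),z_{\tau,\eta+h}(s)\in\calm(s)$ (so the $S$-components are $\Sigma$-values and the $\kappa_\Sigma$-Lipschitz bound \eqref{e:SIGMALIPBETTER} lets you replace $\Gamma(|u(s)|_{N(s)},|v(s)|_{S(s)})$ by $\Gamma(1,\kappa_\Sigma)|u(s)|_{N(s)}$ before Gronwall), then apply Gronwall, exactly as in the derivation of \eqref{e:ESTU}. The only place you deviate is \eqref{e:PSIONST}: you get it pointwise and for free from the graph property, $|v(t)|_{S(t)}=|\Sigma(t,Q(t)\tilde z(t))-\Sigma(t,Q(t)z(t))|_{S(t)}\leq\kappa_\Sigma|u(t)|_{N(t)}$, combined with \eqref{e:PSIONNT}; the paper instead projects the difference of the fixed point equations by $I-Q(t)$, estimates the resulting integral with $L_2\Gamma(1,\kappa_\Sigma)$ and \eqref{e:PSIONNT}, obtaining the constant $\frac{L_2\Gamma(1,\kappa_\Sigma)}{\gamma-\rho-L_1\Gamma(1,\kappa_\Sigma)}$, and then identifies it with $\kappa_\Sigma$ via the defining relation \eqref{e:KAPPABEST}. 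Both routes give the identical bound; yours is shorter, while the paper's stays entirely within the integral-equation estimates (and incidentally re-verifies that $\kappa_\Sigma$ satisfies \eqref{e:KAPPABEST}). Your appeal to Lemma~\ref{lem:characterization} to place $\psi(\eta)(s)$ and $\psi(\eta+h)(s)$ on $\calm(s)$ for $s\leq\tau$ is legitimate and is the same justification the paper uses in Theorem~\ref{thm:betterlip}.
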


\begin{proof}
Estimating the difference of the two fixed point equations projected by $Q(t)$ and using the Lipschitz condition \eqref{e:SIGMALIPBETTER} for $\Sigma$ with $\kappa_\Sigma>0$, we deduce that
\begin{align*}
	& e^{\rho t}|Q(t)[\psi(\eta+h)(t)-\psi(\eta)(t)]|_{N(t)}\\
	& \ \ \leq e^{\rho\tau}|Q(\tau)h|_{N(\tau)}+L_1\Gamma(1,\kappa_\Sigma)\int_{t}^{\tau}e^{\rho s}|Q(s)[\psi(\eta+h)(s)-\psi(\eta)(s)]|_{N(s)}ds
	\ \text{for}\ t\leq\tau,\ \eta, h\in X,
	\end{align*}
whence by the Gronwall inequality we get \eqref{e:PSIONNT}. On the other hand, projecting the equation for the difference of two solutions by $I-Q(t)$ we have
\begin{align*}
	& e^{\gamma t}|(I-Q(t))[\psi(\eta+h)(t)-\psi(\eta)(t)]|_{S(t)}\leq L_2\int_{-\infty}^{t}e^{\gamma s}\norm{\psi(\eta+h)(s)-\psi(\eta)(s)}_{s}ds\\
	& \ \ \leq L_2\Gamma(1,\kappa_\Sigma)|Q(\tau)h|_{N(\tau)}e^{(\rho+L_1\Gamma(1,\kappa_\Sigma))\tau}\int_{-\infty}^{t}e^{(\gamma-\rho-L_1\Gamma(1,\kappa_\Sigma))s}ds\\
& \ \ =\frac{L_2\Gamma(1,\kappa_\Sigma)}{\gamma-\rho-L_1\Gamma(1,\kappa_\Sigma)}|Q(\tau)h|_{N(\tau)}e^{(\rho+L_1\Gamma(1,\kappa_\Sigma))\tau}e^{(\gamma-\rho-L_1\Gamma(1,\kappa_\Sigma))t}.
\end{align*}
This yields \eqref{e:PSIONST}, since by \eqref{e:KAPPABEST} we have
$\gamma-\rho-L_1\Gamma(1,\kappa_\Sigma)=\frac{L_2}{\kappa_\Sigma}\Gamma(1,\kappa_\Sigma).$
The last  estimate follows directly from \eqref{e:PSIONNT} and \eqref{e:PSIONST}.
\end{proof}

We consider the Banach space 
$$F_{\sigma}=\left\{Z\in C((-\infty,\tau];\mathcal{L}(X,X))\colon \norm{Z}_{F_{\sigma}}=\sup_{t\leq\tau}\sup_{\norm{w}\leq 1}e^{\sigma(t-\tau)}\norm{Z(t)w}_{t}=\sup_{\norm{w}\leq 1}\norm{Z(\cdot)w}_{E_\sigma}<\infty\right\}.$$
Note that if $Y$ is a linear and bounded map from $X$ to $E_\sigma$, then
$$\norm{Y}_{\mathcal{L}(X,E_\sigma)}=
\sup_{\norm{w}\leq 1}\norm{Yw}_{E_\sigma}=
\sup_{\norm{w}\leq 1}\sup_{t\leq\tau}e^{\sigma(t-\tau)}\norm{(Yw)(t)}_{t}\text{ for }Y\in \mathcal{L}(X,E_\sigma).$$
Thus, if $Z\in F_\sigma$ then $Y\colon X\to E_\sigma$ given by $(Yw)(t)=Z(t)w$ for $t\leq\tau$, $w\in X$, belongs to $\mathcal{L}(X,E_\sigma)$ and $\norm{Y}_{\mathcal{L}(X,E_\sigma)}=\norm{Z}_{F_\sigma}$.

Given $\eta\in X$  we define the mapping $J_\eta\colon F_\sigma\to F_\sigma$ by
\begin{equation*}
\begin{split}
(J_{\eta}Z)(t)&=L(t,\tau)Q(\tau)-\int_{t}^{\tau}L(t,s)Q(s)Df(s,z_\eta(s))Z(s)ds\\
&+\int_{-\infty}^{t}L(t,s)(I-Q(s))Df(s,z_\eta(s))Z(s)ds\ \ \text{for}\ \   Z\in F_\sigma,\ t\leq\tau.
\end{split}
\end{equation*}
We will prove that $J_\eta$ is well defined, that is, if $Z\in F_\sigma$, then also $J_\eta Z \in F_\sigma$, and that $J_\eta$ is a~contraction. 
To this end, we are in position to estimate
\begin{equation*}
\begin{split} 
|L(t,s)Q(s)Df(s,z_\eta(s))Z(s)w|_{N(t)}&\leq e^{\rho(s-t)}|Q(s)Df(s,z_\eta(s))Z(s)w|_{N(s)}\\
&\leq L_1 e^{\rho(s-t)}\norm{Z(s)w}_{s}\ \ \text{for}\ \ \ s\in[t,\tau],\ \norm{w}\leq 1,\\
\end{split} 
\end{equation*}
\begin{equation*}
\begin{split}
|L(t,s)(I-Q(s))Df(s,z_\eta(s))Z(s)w|_{S(t)}&\leq e^{\gamma(s-t)}|(I-Q(s))Df(s,z_\eta(s))Z(s)w|_{S(s)}\\
&\leq L_2 e^{\gamma(s-t)}\norm{Z(s)w}_{s}\ \ \text{for}\ \  s\leq t,\ \norm{w}\leq 1,
\end{split}
\end{equation*}
whence it follows that
\begin{align*}
&	e^{\sigma(t-\tau)}|Q(t)(J_\eta Z)(t)w|_{N(t)}\leq e^{(\sigma-\rho)(t-\tau)}|Q(\tau)w|_{N(\tau)}+L_1\norm{Z}_{F_\sigma}\int_{t}^{\tau}e^{(\sigma-\rho)(t-s)}ds\\
& \ \ \ =e^{(\sigma-\rho)(t-\tau)}|Q(\tau)w|_{N(\tau)}+L_1\norm{Z}_{F_\sigma}\frac{1-e^{(\sigma-\rho)(t-\tau)}}{\sigma-\rho}\ \ \text{for}\ \ \norm{w}\leq 1,\\
& e^{\sigma(t-\tau)}|(I-Q(t))(J_\eta Z)(t)w|_{S(t)}\leq L_2\norm{Z}_{F_\sigma}\int_{-\infty}^{t}e^{(\gamma-\sigma)(s-t)}ds=\frac{L_2}{\gamma-\sigma}\norm{Z}_{F_\sigma}\ \ \text{for}\ \ \norm{w}\leq 1.
\end{align*}
Consequently, we obtain by \eqref{e:QI-QONX}
$$e^{\sigma(t-\tau)}\norm{(J_\eta Z)(t)w}_{t}\leq M\Gamma(1,0)e^{(\sigma-\rho)(t-\tau)}+\Gamma\left(\frac{L_1}{\sigma-\rho},\frac{L_2}{\gamma-\sigma}\right)\norm{Z}_{F_\sigma}\ \ \text{for}\ \ t\leq\tau,\ \norm{w}\leq 1.$$
Thus $J_\eta$ is well defined and
\begin{equation*}
\norm{J_\eta Z}_{F_\sigma}\leq M\Gamma(1,0)+\Gamma\left(\frac{L_1}{\sigma-\rho},\frac{L_2}{\gamma-\sigma}\right)\norm{Z}_{F_\sigma},\ Z\in F_\sigma.
\end{equation*}
To prove that $J_\eta$ is a contraction, proceeding similarly with the estimates, we obtain
$$\norm{J_\eta Z-J_\eta\tilde Z}_{F_\sigma}\leq \Gamma\left(\frac{L_1}{\sigma-\rho},\frac{L_2}{\gamma-\sigma}\right)\norm{Z-\tilde Z}_{F_\sigma}\ \ \text{for}\ \ Z,\tilde Z\in F_\sigma.$$
By the Banach fixed point theorem for each $\eta\in X$ there exists a unique fixed point $Z_\eta\in F_\sigma$ of $J_\eta$.
We define $\Psi\colon X\to F_\sigma$ by
\begin{equation}\label{e:DEFPSI}
\Psi(\eta)=Z_{\eta}\ \ \text{for}\ \ \eta\in X.
\end{equation}
This function is a candidate for the  Fr\'echet derivative of $\psi$ as we demonstrate in the next theorem.

\begin{thm}\label{thm:DIFFPSI}
For each $t\leq\tau$ the function $\psi(\cdot)(t)\colon X\to X$ is  Fr\'echet differentiable and its Fr\'echet derivative w.r.t. $\eta\in X$ is given by $D_\eta[\psi(\eta)(t)]=\Psi(\eta)(t)\in\mathcal{L}(X,X)$ for $\eta\in X$.
More precisely, there exists a function $r\colon(-\infty,\tau]\times X\times X\to X$ satisfying $r(t,\eta,0)=0$ for $t\leq\tau$, $\eta\in X$ such that
\begin{equation}\label{e:RESTPSI}
\psi(\eta+h)(t)-\psi(\eta)(t)-\Psi(\eta)(t)h=r(t,\eta,h)\norm{h}\ \ \text{for every}\ \  t\leq\tau,\ \eta, h\in X,
\end{equation}
with
\begin{equation*}
\norm{r(t,\eta,h)}_{t}\to 0\text{ as }h\to 0\ \ \text{  for each  }\ \ t\leq\tau,\ \eta\in X.
\end{equation*}
\end{thm}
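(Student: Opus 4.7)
I plan to show that the unique fixed point $\Psi(\eta)\in F_\sigma$ of $J_\eta$ is exactly the Fréchet derivative of the map $\eta\mapsto\psi(\eta)(t)$ at each $t\leq\tau$. The first step is to set up an equation for the residual
\begin{equation*}
R_h(t):=\psi(\eta+h)(t)-\psi(\eta)(t)-\Psi(\eta)(t)h.
\end{equation*}
Subtracting the fixed-point identities $\psi(\eta+h)=H_{\tau,\eta+h}\psi(\eta+h)$ and $\psi(\eta)=H_{\tau,\eta}\psi(\eta)$ (so that the initial data survive only in the $L(t,\tau)Q(\tau)h$ piece) and expanding $f(s,\psi(\eta+h)(s))-f(s,\psi(\eta)(s))$ via \eqref{e:FRECHET1} with increment $\Delta_h(s):=\psi(\eta+h)(s)-\psi(\eta)(s)$ as $Df(s,\psi(\eta)(s))\Delta_h(s)+\eps(s,\psi(\eta)(s),\Delta_h(s))\norm{\Delta_h(s)}_s$, and then subtracting the identity $\Psi(\eta)(\cdot)h=J_\eta(\Psi(\eta)(\cdot)h)$, I obtain
\begin{equation*}
R_h(t)=(\tilde K R_h)(t)+\mathcal{E}_h(t),
\end{equation*}
where the linearization is
\begin{equation*}
(\tilde K R)(t):=-\int_{t}^{\tau}L(t,s)Q(s)Df(s,\psi(\eta)(s))R(s)\,ds+\int_{-\infty}^{t}L(t,s)(I-Q(s))Df(s,\psi(\eta)(s))R(s)\,ds,
\end{equation*}
and $\mathcal{E}_h(t)$ is the analogous integral with $Df(s,\psi(\eta)(s))R(s)$ replaced by $\eps(s,\psi(\eta)(s),\Delta_h(s))\norm{\Delta_h(s)}_s$.

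Using \eqref{e:L1DF}--\eqref{e:L2DF}, exactly the computation that proved $J_\eta$ to be a contraction on $F_\sigma$ shows that $\tilde K$ maps $E_\sigma$ into itself with operator norm bounded by $\theta:=\Gamma(L_1/(\sigma-\rho),L_2/(\gamma-\sigma))<1$. Setting $\alpha:=\rho+L_1\Gamma(1,\kappa_\Sigma)$ and using the already-established bound $\alpha<\sigma$, one checks that $R_h\in E_\sigma$ from \eqref{e:CONTINUITYPSIT} combined with $\Psi(\eta)\in F_\sigma$, and that $\mathcal{E}_h\in E_\sigma$ from the uniform bounds $|Q(s)\eps|_{N(s)}\leq 2L_1$, $|(I-Q(s))\eps|_{S(s)}\leq 2L_2$ combined with \eqref{e:CONTINUITYPSIT}. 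Inverting $I-\tilde K$ in $E_\sigma$ gives
\begin{equation*}
\norm{R_h}_{E_\sigma}\leq (1-\theta)^{-1}\norm{\mathcal{E}_h}_{E_\sigma},
\end{equation*}
so that $\norm{R_h(t)}_t\leq e^{-\sigma(t-\tau)}\norm{R_h}_{E_\sigma}$ at each fixed $t\leq\tau$. It therefore suffices to show $\norm{\mathcal{E}_h}_{E_\sigma}/\norm{h}\to 0$ as $h\to 0$, since then $r(t,\eta,h):=R_h(t)/\norm{h}$ has the required behavior.

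The crux of the argument, and the main obstacle, is this last limit. Estimating the $\Gamma$-norm of $\mathcal{E}_h(t)$ with the help of \eqref{e:CONTINUITYPSIT} and $|Q(\tau)h|_{N(\tau)}\leq M\norm{h}$, a direct calculation leads to a bound of the form
\begin{equation*}
\frac{e^{\sigma(t-\tau)}\norm{\mathcal{E}_h(t)}_t}{\norm{h}}\leq C\, e^{-(\sigma-\alpha)(\tau-t)}\bigl(\Phi^N_h(t)+\Phi^S_h(t)\bigr),
\end{equation*}
where $\Phi^N_h(t):=\int_{t}^{\tau}e^{(\rho-\alpha)(s-t)}|Q(s)\eps(s,\psi(\eta)(s),\Delta_h(s))|_{N(s)}\,ds$ and $\Phi^S_h(t)$ is the analogous stable expression. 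By \eqref{e:FRECHET2} and the fact that $\Delta_h(s)\to 0$ in $X$ for each fixed $s$, each integrand tends pointwise to $0$ as $h\to 0$ and is uniformly bounded by $2L_1$, respectively $2L_2$. To obtain smallness uniformly in $t\leq\tau$, I would split at some $T_0$: for $t\leq T_0$ the prefactor $e^{-(\sigma-\alpha)(\tau-t)}\leq e^{-(\sigma-\alpha)(\tau-T_0)}$ multiplied by the uniform bound $2L_1/(\alpha-\rho)$ on $\Phi^N_h$ is made arbitrarily small by choosing $T_0$ sufficiently negative; for $t\in[T_0,\tau]$ the key observation is that $e^{(\rho-\alpha)(s-t)}\leq 1$ on $s\geq t$, whence
\begin{equation*}
\Phi^N_h(t)\leq\int_{T_0}^{\tau}|Q(s)\eps(s,\psi(\eta)(s),\Delta_h(s))|_{N(s)}\,ds,
\end{equation*}
a $t$-independent quantity that tends to zero as $h\to 0$ by Lebesgue's dominated convergence theorem on the bounded interval $[T_0,\tau]$ with the constant majorant $2L_1$. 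Treating $\Phi^S_h$ identically completes the required smallness $\norm{\mathcal{E}_h}_{E_\sigma}/\norm{h}\to 0$, and the contraction bound above finishes the proof.
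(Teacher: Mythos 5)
Your proposal is correct and follows essentially the same route as the paper: both derive the equation $R_h=\tilde K R_h+\mathcal{E}_h$ from the two fixed-point identities and \eqref{e:FRECHET1}, absorb the linear part using the gap constant $\Gamma\left(\frac{L_1}{\sigma-\rho},\frac{L_2}{\gamma-\sigma}\right)<1$, and reduce everything to showing that the weighted $\eps$-integrals vanish as $h\to 0$ via \eqref{e:CONTINUITYPSIT}, the bounds $|Q(s)\eps|_{N(s)}\leq 2L_1$, $|(I-Q(s))\eps|_{S(s)}\leq 2L_2$, the inequality $\sigma>\rho+L_1\Gamma(1,\kappa_\Sigma)$, and dominated convergence. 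The only differences are presentational: the paper argues the final limit by contradiction (a sequence $t_n$ forced to stay bounded below by the decaying prefactor) while you split directly at a level $T_0$, and for the stable term $\Phi^S_h$ your ``identical'' treatment should be read as using the exponentially weighted majorant $2L_2e^{(\gamma-\rho-L_1\Gamma(1,\kappa_\Sigma))(s-\tau)}$ on the unbounded interval $(-\infty,\tau]$, exactly as the paper does.
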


\begin{proof}
Note that for $\eta\in X$ and $h\neq 0$ we know that $r(\cdot,\eta,h)$ defined by \eqref{e:RESTPSI} belongs to $E_\sigma$. We will in fact show that
$$r(\cdot,\eta,h)\to 0\text{ in }E_\sigma\text{ as }h\to0.$$
For $h\neq 0$ we have
\begin{align*}
	& r(t,\eta,h)=-\norm{h}^{-1}\int_{t}^{\tau}L(t,s)Q(s)\eps(s,\psi(\eta)(s),\psi(\eta+h)(s)-\psi(\eta)(s))\norm{\psi(\eta+h)(s)-\psi(\eta)(s)}_{s}ds\\
& \ +\norm{h}^{-1}\int_{-\infty}^{t}L(t,s)(I-Q(s))\eps(s,\psi(\eta)(s),\psi(\eta+h)(s)-\psi(\eta)(s))\norm{\psi(\eta+h)(s)-\psi(\eta)(s)}_{s}ds\\
& \ -\int_{t}^{\tau}L(t,s)Q(s)Df(s,\psi(\eta)(s))r(s,\eta,h)ds+\int_{-\infty}^{t}L(t,s)(I-Q(s))Df(s,\psi(\eta)(s))r(s,\eta,h)ds.
\end{align*}
Denote
$$\eps_1(s,\eta,h)=\eps(s,\psi(\eta)(s),\psi(\eta+h)(s)-\psi(\eta)(s)).$$
Using \eqref{e:L1DF}, \eqref{e:L2DF} and the bound \eqref{e:CONTINUITYPSIT} combined with \eqref{e:QI-QONX}, we get for $t\leq\tau$
\begin{align*}
&	e^{\sigma(t-\tau)}|Q(t)r(t,\eta,h)|_{N(t)}\leq \frac{L_1}{\sigma-\rho}\norm{r(\cdot,\eta,h)}_{E_\sigma}\\
& \qquad  +M\Gamma(1,\kappa_\Sigma)e^{(\sigma-\rho)(t-\tau)}\int_{t}^{\tau}e^{L_1\Gamma(1,\kappa_\Sigma)(\tau-s)}|Q(s)\eps_1(s,\eta,h)|_{N(s)}ds,\\
& e^{\sigma(t-\tau)}|(I-Q(t))r(t,\eta,h)|_{S(t)}\leq\frac{L_2}{\gamma-\sigma}\norm{r(\cdot,\eta,h)}_{E_\sigma}\\
& \qquad   +M\Gamma(1,\kappa_\Sigma)e^{(\gamma-\sigma)(\tau-t)}\int_{-\infty}^{t}e^{(\gamma-\rho-L_1\Gamma(1,\kappa_\Sigma))(s-\tau)}|(I-Q(s))\eps_1(s,\eta,h)|_{S(s)}ds.
\end{align*}
Thus we obtain
\begin{align*}
	& e^{\sigma(t-\tau)}\norm{r(t,\eta,h)}_{t}\leq \Gamma\left(\frac{L_1}{\sigma-\rho},\frac{L_2}{\gamma-\sigma}\right)\norm{r(\cdot,\eta,h)}_{E_\sigma}\\
	&\qquad \ \ +M\Gamma(1,\kappa_\Sigma)\Gamma\Bigg(e^{(\sigma-\rho)(t-\tau)}\int_{t}^{\tau}e^{L_1\Gamma(1,\kappa_\Sigma)(\tau-s)}|Q(s)\eps_1(s,\eta,h)|_{N(s)}ds,\\
&\qquad \qquad \ \  e^{(\gamma-\sigma)(\tau-t)}\int_{-\infty}^{t}e^{(\gamma-\rho-L_1\Gamma(1,\kappa_\Sigma))(s-\tau)}|(I-Q(s))\eps_1(s,\eta,h)|_{S(s)}ds\Bigg).
\end{align*}
Therefore, we have
$$\norm{r(\cdot,\eta,h)}_{E_\sigma}\leq\Gamma\left(\frac{L_1}{\sigma-\rho},\frac{L_2}{\gamma-\sigma}\right)\norm{r(\cdot,\eta,h)}_{E_\sigma}+M\Gamma(1,\kappa_\Sigma)\Gamma(A(\eta,h),B(\eta,h)),$$
where
\begin{align*}
	& A(\eta,h)=\sup_{t\leq\tau}e^{(\sigma-\rho)(t-\tau)}\int_{t}^{\tau}e^{L_1\Gamma(1,\kappa_\Sigma)(\tau-s)}|Q(s)\eps_1(s,\eta,h)|_{N(s)}ds,\\
	& B(\eta,h)=\sup_{t\leq\tau}e^{(\gamma-\sigma)(\tau-t)}\int_{-\infty}^{t}e^{(\gamma-\rho-L_1\Gamma(1,\kappa_\Sigma))(s-\tau)}|(I-Q(s))\eps_1(s,\eta,h)|_{S(s)}ds.
\end{align*}
Since $\Gamma(\frac{L_1}{\sigma-\rho},\frac{L_2}{\gamma-\sigma})<1$, we will prove the claim if we show that
$$(i)\quad A(\eta,h)\to 0\text{ as }h\to 0,$$
$$(ii)\quad B(\eta,h)\to 0\text{ as }h\to 0.$$
First note that $A(\eta,h)$ and $B(\eta,h)$ are well defined. Indeed, we have for $t\leq\tau$ and $\eta,h\in X$
\begin{align*}
&e^{(\sigma-\rho)(t-\tau)}\int_{t}^{\tau}e^{L_1\Gamma(1,\kappa_\Sigma)(\tau-s)}|Q(s)\eps_1(s,\eta,h)|_{N(s)}ds
\leq 2L_1e^{(\sigma-\rho)(t-\tau)}\int_{t}^{\tau}e^{L_1\Gamma(1,\kappa_\Sigma)(\tau-s)}ds\\
&\qquad\leq \frac{2}{\Gamma(1,\kappa_\Sigma)}e^{(\sigma-\rho-L_1\Gamma(1,\kappa_\Sigma))(t-\tau)},\\
&e^{(\gamma-\sigma)(\tau-t)}\int_{-\infty}^{t}e^{(\gamma-\rho-L_1\Gamma(1,\kappa_\Sigma))(s-\tau)}|(I-Q(s))\eps_1(s,\eta,h)|_{S(s)}ds\\
&\qquad \leq 2L_2e^{(\gamma-\sigma)(\tau-t)}\int_{-\infty}^{t}e^{(\gamma-\rho-L_1\Gamma(1,\kappa_\Sigma))(s-\tau)}ds=\frac{2L_2}{\gamma-\rho-L_1\Gamma(1,\kappa_\Sigma)}e^{(\sigma-\rho-L_1\Gamma(1,\kappa_\Sigma))(t-\tau)},
\end{align*}
since 
$$\sigma-\rho-L_1\Gamma(1,\kappa_\Sigma)>\sigma-\rho-L_1\Gamma(1,\kappa)=\sigma-\rho-(\sigma-\rho)\Gamma\left(\frac{L_1}{\sigma-\rho},\frac{L_2}{\gamma-\sigma}\right)>0.$$
Thus we have
$$A(\eta,h)\leq\frac{2}{\Gamma(1,\kappa_\Sigma)}\ \text{ and }\ B(\eta,h)\leq \frac{2L_2}{\gamma-\rho-L_1\Gamma(1,\kappa_\Sigma)}.$$
Now, contrary to the required claim, suppose that $A(\eta,h)\not\to 0$ as $h\to 0$. Thus there exists $\epsilon_0>0$ and $h_n\to 0$ such that $A(\eta,h_n)>\epsilon_0$ for all $n\in\N$. Therefore, there exist $t_n\leq\tau$ such that 
$$\epsilon_0<e^{(\sigma-\rho)(t_n-\tau)}\int_{t_n}^{\tau}e^{L_1\Gamma(1,\kappa_\Sigma)(\tau-s)}|Q(s)\eps_1(s,\eta,h_n)|_{N(s)}ds.$$
Since the right-hand side is bounded from above by $\frac{2}{\Gamma(1,\kappa_\Sigma)}e^{(\sigma-\rho-L_1\Gamma(1,\kappa_\Sigma))(t_n-\tau)}$,
the sequence $t_n$ cannot have a subsequence tending to $-\infty$. Hence there exists $\tau_0$ such that $t_n\geq\tau_0$ for all $n\in\N$.
We have
$$\epsilon_0<\int_{\tau_0}^{\tau}|Q(s)\eps_1(s,\eta,h_n)|_{N(s)}ds.$$
By~\eqref{e:FRECHET2} and \eqref{e:CONTINUITYPSIT}, for each $s\in[\tau_0,\tau]$ and $\eta\in X$ we have $|Q(s)\eps_1(s,\eta,h_n)|_{N(s)}\to 0$ as $n\to\infty$ and $|Q(s)\eps_1(s,\eta,h_n)|_{N(s)}\leq 2L_1$, which is integrable on $[\tau_0,\tau]$. Thus, the Lebesgue dominated convergence theorem implies that the right-hand side converges to zero as $n\to\infty$. This is a~contradiction with the choice of $\epsilon_0$.

Similarly, contrary to the claim, suppose now that $B(\eta,h)\not\to 0$ as $h\to 0$. Thus there exists $\epsilon_0>0$ and $h_n\to 0$ such that $B(\eta,h_n)>\epsilon_0$ for all $n\in\N$. Therefore, there exist $t_n\leq\tau$ such that 
$$\epsilon_0<e^{(\gamma-\sigma)(\tau-t_n)}\int_{-\infty}^{t_n}e^{(\gamma-\rho-L_1\Gamma(1,\kappa_\Sigma))(s-\tau)}|(I-Q(s))\eps_1(s,\eta,h_n)|_{S(s)}ds.$$
Since $$(\gamma-\sigma)(\tau-t_n)+(\gamma-\rho-L_1\Gamma(1,\kappa_\Sigma))(s-\tau)=(\sigma-\rho-L_1\Gamma(1,\kappa_\Sigma))(s-\tau)+(\gamma-\sigma)(s-t_n),$$
we get
$$\epsilon_0<\int_{-\infty}^{\tau}e^{(\gamma-\rho-L_1\Gamma(1,\kappa_\Sigma))(s-\tau)}|(I-Q(s))\eps_1(s,\eta,h_n)|_{S(s)}ds.$$
For each $s\in(-\infty,\tau]$ and $\eta\in X$ we have $e^{(\gamma-\rho-L_1\Gamma(1,\kappa_\Sigma))(s-\tau)}|(I-Q(s))\eps_1(s,\eta,h_n)|_{S(s)}\to 0$ as $n\to\infty$ and $$e^{(\gamma-\rho-L_1\Gamma(1,\kappa_\Sigma))(s-\tau)}|(I-Q(s))\eps_1(s,\eta,h_n)|_{S(s)}\leq 2L_2e^{(\gamma-\rho-L_1\Gamma(1,\kappa_\Sigma))(s-\tau)},$$ 
which is integrable on $(-\infty,\tau]$. Thus the Lebesgue dominated convergence theorem shows that the right-hand side converges to zero as $n\to\infty$. This is a contradiction with the choice of $\epsilon_0$.
\end{proof}

Since $\psi(\eta)(\tau)=z_{\eta}(\tau) = Q(\tau)\eta+\Sigma(\tau,\eta)$, the differentiability of $\psi$ implies differentiability of $\Sigma$ with respect to $\eta$ variable. 

\begin{proof}[Proof of Theorem~\ref{thm:MAIN2}]
By Theorem~\ref{thm:DIFFPSI}, $\Sigma(\tau,\eta)=\psi(\eta)(\tau)-Q(\tau)\eta$ is differentiable w.r.t. $\eta$ and
\begin{equation}\label{e:DERIVFORMULA}
D_\eta\Sigma(\tau,\eta)=\Psi(\eta)(\tau)-Q(\tau),
\end{equation}
where $\Psi(\eta)(\tau)=Z_{\eta}(\tau)$ and
$$Z_\eta(\tau)=Q(\tau)+\int_{-\infty}^{\tau}L(\tau,s)(I-Q(s))Df(s,z_\eta(s))Z_{\eta}(s)ds.$$
Since $Z_{\eta}(s)=D_\eta[z_{\eta}(s)]$ for $s\leq\tau$, we obtain \eqref{e:DERSIGMA}.
\end{proof}

\section{\texorpdfstring{$C^1$}\ \  invariant and inertial manifolds}\label{sec:c1}
In addition to the previously assumed gap condition \eqref{e:GAPGENERAL00}, i.e.,
$$\Gamma\left(\frac{L_1}{\sigma-\rho},\frac{L_2}{\gamma-\sigma}\right)<1,$$
and the Fr\'echet differentiability \eqref{e:FRECHET1}, \eqref{e:FRECHET2} of the function $f\colon \R\times X\to X$, 
in this section we further assume that $Df(t,\cdot)$ is continuous, i.e.,
\begin{equation}\label{e:FRECHET3}
\norm{Df(t,u)-Df(t,u_0)}_{\mathcal{L}(X,X)}\to 0\text{ as }u\to u_0\text{ in }X\text{ for every }t\in\R,\ u_0\in X. 
\end{equation}
The goal of this section is to show $C^1$ smoothness of the inertial manifold $\{\calm(t)\colon t\in\R\}$ defined via the function $\Sigma$ in Theorem~\ref{thm:MAIN1}.

\begin{thm}\label{thm:MAIN3}
Under assumptions of Theorem~\ref{thm:MAIN2} and \eqref{e:FRECHET3}, the derivative of $\Sigma$ w.r.t. $\eta$ variable is continuous, i.e., for each $\tau\in\R$ and $\eta\in X$
$$D_\eta\Sigma(\tau,\eta+h)\to D_\eta\Sigma(\tau,\eta)\ \text{in}\ \mathcal{L}(X,X)\ \text{as}\ h\to 0.$$
\end{thm}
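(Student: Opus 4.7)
The plan is to prove continuity of the map $X\ni\eta\mapsto \Psi(\eta)\in F_\sigma$ defined in \eqref{e:DEFPSI}; the conclusion follows immediately via $D_\eta\Sigma(\tau,\eta)=\Psi(\eta)(\tau)-Q(\tau)$ from Theorem~\ref{thm:MAIN2}. Since $\Psi(\eta+h)$ and $\Psi(\eta)$ are the unique fixed points of the contractions $J_{\eta+h}$ and $J_\eta$ on $F_\sigma$ with common contraction constant $k := \Gamma(\frac{L_1}{\sigma-\rho},\frac{L_2}{\gamma-\sigma}) < 1$, the standard parameter-dependent fixed-point estimate gives
\[
\norm{\Psi(\eta+h)-\Psi(\eta)}_{F_\sigma} \leq \frac{1}{1-k}\norm{(J_{\eta+h}-J_\eta)\Psi(\eta)}_{F_\sigma},
\]
so it suffices to show the right-hand side tends to zero as $h\to 0$.

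The key preparatory step is to establish a sharper decay rate for $\Psi(\eta)$, analogous to Lemma~\ref{lem:COMPLETETRAJECTORY} for $z_\eta$: namely, for $t\leq\tau$ and $w\in X$,
\[
\norm{\Psi(\eta)(t)w}_t \leq M\Gamma(1,\kappa_\Sigma)\norm{w}\,e^{\beta(\tau-t)},
\]
where $\beta := \rho+L_1\Gamma(1,\kappa_\Sigma) < \sigma$ by \eqref{e:KAPPABEST} and the gap condition. I would derive this in two steps: first, the cone condition $|(I-Q(t))\Psi(\eta)(t)w|_{S(t)}\leq\kappa_\Sigma|Q(t)\Psi(\eta)(t)w|_{N(t)}$, obtained by applying the cone-condition argument of Corollary~\ref{cor:ALWAYSNEGATIVE} to the linearized fixed-point problem with nonlinearity $\tilde f(s,W):=Df(s,z_\eta(s))W$, whose Lipschitz constants are $L_1,L_2$ by \eqref{e:L1DF}--\eqref{e:L2DF}; second, Gronwall's inequality on the $N(t)$-projection of the identity $\Psi(\eta)=J_\eta\Psi(\eta)$, paralleling the proof of Lemma~\ref{lem:COMPLETETRAJECTORY}.

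With this in hand, letting $A_h(s) := Df(s,z_{\eta+h}(s))-Df(s,z_\eta(s))$ and $\omega_h(s):=\norm{A_h(s)}_{\mathcal{L}(X,X)}$, componentwise estimation of $(J_{\eta+h}-J_\eta)\Psi(\eta)(t)w$ using the sharper decay above yields, for $\norm{w}\leq 1$,
\[
e^{\sigma(t-\tau)}\norm{(J_{\eta+h}-J_\eta)\Psi(\eta)(t)w}_t \leq C_0\,\Gamma\bigl(\mathcal{A}(t,h),\mathcal{B}(t,h)\bigr),
\]
with
\[
\mathcal{A}(t,h) = e^{(\sigma-\beta)(t-\tau)}\!\!\int_t^\tau\!\! e^{-(\beta-\rho)(s-t)}\omega_h(s)\,ds,\ \ \mathcal{B}(t,h) = e^{(\sigma-\beta)(t-\tau)}\!\!\int_{-\infty}^t\!\! e^{-(\gamma-\beta)(t-s)}\omega_h(s)\,ds.
\]
The prefactor $e^{(\sigma-\beta)(t-\tau)}$ decays as $t\to-\infty$ thanks to $\sigma>\beta$; combined with the uniform bound $\omega_h\leq\text{const}$ coming from \eqref{e:L1DF}--\eqref{e:L2DF}, this produces uniform smallness of $\mathcal{A},\mathcal{B}$ on $t\leq\tau_0$ for $\tau_0$ sufficiently negative.

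The proof is then concluded by a contradiction argument parallel to the closing steps of Theorem~\ref{thm:DIFFPSI}: for $t$ in the compact window $[\tau_0,\tau]$, one has $\omega_h(s)\to 0$ pointwise in $s$ by \eqref{e:FRECHET3} together with the continuity $z_{\eta+h}(s)\to z_\eta(s)$ from \eqref{e:CONTINUITYPSIT}, while $\omega_h$ is dominated by a constant; Lebesgue's dominated convergence theorem (applied after splitting the $\mathcal{B}$-integral at $\tau_0-R$ with $R$ large to render the tail uniformly small in $t$) then yields $\sup_{t\in[\tau_0,\tau]}\Gamma(\mathcal{A}(t,h_n),\mathcal{B}(t,h_n))\to 0$ for any $h_n\to 0$, a contradiction with the assumed failure of $\sup_{t\leq\tau}\Gamma(\mathcal{A},\mathcal{B})\to 0$. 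The main obstacle is precisely the interplay between the supremum over the unbounded interval $(-\infty,\tau]$ and the merely pointwise convergence of $\omega_h$; the sharper decay estimate for $\Psi(\eta)$ is what resolves this by furnishing the critical taming factor $e^{(\sigma-\beta)(t-\tau)}$.
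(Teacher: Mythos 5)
Your argument is correct, but it takes a genuinely different route from the paper. You work in the original space $F_\sigma$ and compensate for the pointwise-only convergence of $\norm{Df(s,z_{\eta+h}(s))-Df(s,z_\eta(s))}_{\mathcal{L}(X,X)}$ by first proving a sharper decay estimate $\norm{\Psi(\eta)(t)w}_t\leq C\norm{w}e^{\beta(\tau-t)}$ with $\beta=\rho+L_1\Gamma(1,\kappa_\Sigma)<\sigma$, obtained from a cone condition for the linearized fixed-point problem $W\mapsto Df(s,z_\eta(s))W$ plus Gronwall, exactly paralleling Lemma~\ref{lem:COMPLETETRAJECTORY}; the factor $e^{(\sigma-\beta)(t-\tau)}$ then tames the supremum over $(-\infty,\tau]$ and the rest is the same dominated-convergence/contradiction scheme as in Theorem~\ref{thm:DIFFPSI}. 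The paper avoids this auxiliary estimate entirely: since the gap condition is open in $\sigma$, it picks $\mu\in(\sigma,\gamma)$ with $\Gamma\bigl(\tfrac{L_1}{\mu-\rho},\tfrac{L_2}{\gamma-\mu}\bigr)<1$ and proves continuity of $\eta\mapsto\Psi(\eta)$ only in the weaker norm $F_\mu$; there the mere membership $\Psi(\eta)\in F_\sigma$ already supplies the decaying factor $e^{(\mu-\sigma)(t-\tau)}$, and pointwise (in $t$) operator-norm convergence, in particular at $t=\tau$, is all that Theorem~\ref{thm:MAIN3} needs. Your route buys a stronger conclusion (continuity in $F_\sigma$ itself, plus the independently interesting fact that the tangent maps inherit the improved growth rate $\beta$), at the cost of one more lemma; note that you cannot literally cite Corollary~\ref{cor:ALWAYSNEGATIVE}, which is stated for the nonlinear process, so you must rerun the cone machinery (Lemmas~\ref{lem:POS}--\ref{lem:NEG}, Proposition~\ref{prop:zeta}, and, if you insist on $\kappa_\Sigma$ rather than the sufficient $\kappa$, the iteration of Corollary~\ref{cor:betterlip}) for the linearized problem --- this does go through verbatim, because those proofs use only the Duhamel relations, forward uniqueness (Gronwall, since $Df$ is uniformly bounded by \eqref{e:L1DF}--\eqref{e:L2DF}), and the Lipschitz constants $L_1,L_2$, which are unchanged. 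The paper's $\mu$-trick is the cheaper way to the stated theorem; your estimate is the more informative one.
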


Keeping the notation of Section~\ref{sec:diff}, we first show the continuity of function $\Psi$ defined in \eqref{e:DEFPSI}, which will be sufficient for Theorem~\ref{thm:MAIN3}.
For that purpose, note that by the continuity of $\Gamma$ we can choose $\sigma<\mu<\gamma$ such that 
\begin{equation*}
\Gamma\left(\frac{L_1}{\mu-\rho},\frac{L_2}{\gamma-\mu}\right)<1.
\end{equation*}

\begin{thm}
The function $X\ni \eta\mapsto \Psi(\eta)\in F_\sigma\subset F_\mu$ defined in \eqref{e:DEFPSI} is continuous in $F_\mu$, i.e.,
$$\norm{\Psi(\eta+h)-\Psi(\eta)}_{F_\mu}\to 0\text{ as }h\to0\text{ for each }\eta\in X.$$ 
In particular, we have
\begin{equation}\label{e:CONTINFSIGMA}
\norm{\Psi(\eta+h)(t)-\Psi(\eta)(t)}_{\mathcal{L}(X,X)}\to0\text{ as }h\to0\ \text{for each}\ t\leq\tau\ \text{and}\ \eta\in X.
\end{equation}
\end{thm}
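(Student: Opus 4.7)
The plan is to exploit the fixed point characterization $\Psi(\eta)=J_\eta\Psi(\eta)$ together with the fact that $J_\eta$ remains a uniform contraction on the larger space $F_\mu$. First, I would observe that $F_\sigma\hookrightarrow F_\mu$ continuously (with $\|\cdot\|_{F_\mu}\leq \|\cdot\|_{F_\sigma}$, since $\mu>\sigma$ and $t-\tau\leq 0$), and that repeating verbatim the contraction estimates preceding this theorem with $\sigma$ replaced by $\mu$ yields a contraction constant $\alpha:=\Gamma\bigl(\tfrac{L_1}{\mu-\rho},\tfrac{L_2}{\gamma-\mu}\bigr)<1$. By uniqueness, $Z_\eta=\Psi(\eta)\in F_\sigma\subset F_\mu$ is the fixed point of $J_\eta$ in $F_\mu$ as well. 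From the identity
$$Z_{\eta+h}-Z_\eta \;=\; J_{\eta+h}(Z_{\eta+h}-Z_\eta)\,+\,(J_{\eta+h}-J_\eta)Z_\eta,$$
the contraction property in $F_\mu$ then gives $\|Z_{\eta+h}-Z_\eta\|_{F_\mu}\leq (1-\alpha)^{-1}\|(J_{\eta+h}-J_\eta)Z_\eta\|_{F_\mu}$, reducing the problem to proving that the right-hand side vanishes as $h\to 0$.

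Next, I would introduce $\Delta(s,h):=Df(s,z_{\eta+h}(s))-Df(s,z_\eta(s))$ and the quantities
$$\delta_1(s,h):=\!\!\sup_{\|v\|_s\leq 1}\!|Q(s)\Delta(s,h)v|_{N(s)},\qquad \delta_2(s,h):=\!\!\sup_{\|v\|_s\leq 1}\!|(I-Q(s))\Delta(s,h)v|_{S(s)}.$$
By \eqref{e:L1DF}--\eqref{e:L2DF} these are bounded uniformly by $2L_1$ and $2L_2$. The continuity hypothesis \eqref{e:FRECHET3} on $Df$, combined with the pointwise continuity $z_{\eta+h}(s)\to z_\eta(s)$ from \eqref{e:CONTINUITYPSIT} and the equivalence of norms in Lemma~\ref{lem:EQUIVNORMS}, gives $\delta_i(s,h)\to 0$ as $h\to 0$ for each fixed $s\leq \tau$. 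Recycling the estimates from the contraction proof but inserting the factor $e^{\mu(t-\tau)}$ and using $\|Z_\eta(r)w\|_r\leq \|Z_\eta\|_{F_\sigma}e^{-\sigma(r-\tau)}$ for $\|w\|\leq 1$, one arrives at the two key bounds
$$e^{\mu(t-\tau)}\bigl|Q(t)[(J_{\eta+h}-J_\eta)Z_\eta](t)w\bigr|_{N(t)}\leq \|Z_\eta\|_{F_\sigma}\,e^{(\mu-\sigma)(t-\tau)}\!\int_t^\tau\! e^{(\rho-\sigma)(s-t)}\delta_1(s,h)\,ds,$$
$$e^{\mu(t-\tau)}\bigl|(I-Q(t))[(J_{\eta+h}-J_\eta)Z_\eta](t)w\bigr|_{S(t)}\leq \|Z_\eta\|_{F_\sigma}\,e^{(\mu-\sigma)(t-\tau)}\!\int_{-\infty}^t\! e^{(\gamma-\sigma)(s-t)}\delta_2(s,h)\,ds.$$

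To conclude, I would take suprema over $\|w\|\leq 1$ and $t\leq \tau$, using the admissible norm $\Gamma$ to combine the two components, and then split each $t$-supremum: on a tail $(-\infty,T]$ with $T$ sufficiently negative, the decaying factor $e^{(\mu-\sigma)(t-\tau)}$ (which is where $\mu>\sigma$ is essential) together with the uniform bounds $\delta_i\leq 2L_i$ produces smallness uniformly in $h$; on the complement $[T,\tau]$, dominated convergence applies, with the second bound requiring one additional splitting of the $s$-integral into $(-\infty,T_2]$ and $[T_2,t]$ to reduce to a bounded $s$-domain where $\delta_2(s,h)\to 0$ pointwise with integrable dominant.

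The hard part, and the reason the conclusion is stated in $F_\mu$ rather than $F_\sigma$, is exactly this interchange of $\sup_t$ with $\lim_{h\to 0}$: in $F_\sigma$ the analogous bounds carry no decay as $t\to -\infty$, so the tail cannot be truncated uniformly in $h$. Enlarging the space to $F_\mu$ with $\sigma<\mu<\gamma$ still satisfying the gap condition creates the uniform decay $e^{(\mu-\sigma)(t-\tau)}\to 0$ at $-\infty$ that drives the splitting above. Finally, the assertion \eqref{e:CONTINFSIGMA} is immediate from $F_\mu$-convergence: for each fixed $t\leq \tau$ the weight $e^{\mu(t-\tau)}$ is a positive constant and $\|\cdot\|_t$ is equivalent to $\|\cdot\|$ by Lemma~\ref{lem:EQUIVNORMS}, so $\|\Psi(\eta+h)(t)-\Psi(\eta)(t)\|_{\mathcal{L}(X,X)}\to 0$.
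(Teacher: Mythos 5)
Your proposal is correct and follows essentially the same route as the paper: reduce via the contraction of $J_\eta$ on $F_\mu$ to showing $\|(J_{\eta+h}-J_\eta)\Psi(\eta)\|_{F_\mu}\to 0$, then use the decay factor $e^{(\mu-\sigma)(t-\tau)}$ (available because $\Psi(\eta)\in F_\sigma$ with $\mu>\sigma$) to control the supremum for very negative $t$, and dominated convergence together with \eqref{e:CONTINUITYPSIT}, \eqref{e:FRECHET3} and the norm equivalences on the remaining compact range. The paper phrases the last step as a contradiction argument with sequences $t_n$, $w_n$, $h_n$ rather than your direct tail-splitting, but this is only a presentational difference.
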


\begin{proof}
We first observe that $\Psi$ defined in \eqref{e:DEFPSI} satisfies
\begin{align*}
	& \Psi(\eta+h)(t)-\Psi(\eta)(t)=-\int_{t}^{\tau}L(t,s)Q(s)Df(s,\psi(\eta+h)(s))\Psi(\eta+h)(s)ds\\
	&\ \ \ \  +\int_{t}^{\tau}L(t,s)Q(s)Df(s,\psi(\eta)(s))\Psi(\eta)(s)ds\\
	&\ \ \ \  +\int_{-\infty}^{t}L(t,s)(I-Q(s))Df(s,\psi(\eta+h)(s))\Psi(\eta+h)(s)ds\\
& \ \ \ \  -\int_{-\infty}^{t}L(t,s)(I-Q(s))Df(s,\psi(\eta)(s))\Psi(\eta)(s)ds\ \ \text{for}\ \ t\leq\tau,\ \eta, h\in X.
\end{align*}
Thus, we have
\begin{align*}
	& \Psi(\eta+h)(t)-\Psi(\eta)(t)=-\int_{t}^{\tau}L(t,s)Q(s)[Df(s,\psi(\eta+h)(s))-Df(s,\psi(\eta)(s))]\Psi(\eta)(s)ds\\
	& \ \ -\int_{t}^{\tau}L(t,s)Q(s)Df(s,\psi(\eta+h)(s))[\Psi(\eta+h)(s)-\Psi(\eta)(s)]ds\\
& \ \ +\int_{-\infty}^{t}L(t,s)(I-Q(s))[Df(s,\psi(\eta+h)(s))-Df(s,\psi(\eta)(s))]\Psi(\eta)(s)ds\\
& \ \ +\int_{-\infty}^{t}L(t,s)(I-Q(s))Df(s,\psi(\eta+h)(s))[\Psi(\eta+h)(s)-\Psi(\eta)(s)]ds\ \ \text{for}\ \ t\leq\tau,\ \eta, h\in X.
\end{align*}
Hence, by \eqref{e:L1DF}, for $\norm{w}\leq 1$ we have 
\begin{align*}
&	|Q(t)[\Psi(\eta+h)(t)-\Psi(\eta)(t)]w|_{N(t)}\leq L_1\int_{t}^{\tau}e^{\rho(s-t)}\norm{[\Psi(\eta+h)(s)-\Psi(\eta)(s)]w}_{s}ds \\
& \ \  +\int_{t}^{\tau}e^{\rho(s-t)}|Q(s)[Df(s,\psi(\eta+h)(s))-Df(s,\psi(\eta)(s))]\Psi(\eta)(s)w|_{N(s)}ds.
\end{align*}
Similarly, by \eqref{e:L2DF}, for $\norm{w}\leq 1$ we have 
\begin{align*}
	&|(I-Q(t))[\Psi(\eta+h)(t)-\Psi(\eta)(t)]w|_{S(t)}\leq L_2\int_{-\infty}^{t}e^{\gamma(s-t)}\norm{[\Psi(\eta+h)(s)-\Psi(\eta)(s)]w}_{s}ds\\
	& \ \ +\int_{-\infty}^{t}e^{\gamma(s-t)}|(I-Q(s))[Df(s,\psi(\eta+h)(s))-Df(s,\psi(\eta)(s))]\Psi(\eta)(s)w|_{S(s)}ds.\end{align*}
Thus, we get
\begin{align*}
	&e^{\mu(t-\tau)}|Q(t)[\Psi(\eta+h)(t)-\Psi(\eta)(t)]w|_{N(t)}\leq L_1\norm{\Psi(\eta+h)-\Psi(\eta)}_{F_\mu}\int_{t}^{\tau}e^{(\mu-\rho)(t-s)}ds\\
	& \ \ +e^{\mu(t-\tau)}\int_{t}^{\tau}e^{\rho(s-t)}|Q(s)[Df(s,\psi(\eta+h)(s))-Df(s,\psi(\eta)(s))]\Psi(\eta)(s)w|_{N(s)}ds,\\
& e^{\mu(t-\tau)}|(I-Q(t))[\Psi(\eta+h)(t)-\Psi(\eta)(t)]w|_{S(t)}\leq L_2\norm{\Psi(\eta+h)-\Psi(\eta)}_{F_\mu}\int_{-\infty}^{t}e^{(\gamma-\mu)(s-t)}ds\\& \ \ +e^{\mu(t-\tau)}\int_{-\infty}^{t}e^{\gamma(s-t)}|(I-Q(s))[Df(s,\psi(\eta+h)(s))-Df(s,\psi(\eta)(s))]\Psi(\eta)(s)w|_{S(s)}ds.\end{align*}
Therefore, we obtain
$$\norm{\Psi(\eta+h)-\Psi(\eta)}_{F_\mu}\leq\Gamma\left(\frac{L_1}{\mu-\rho},\frac{L_2}{\gamma-\mu}\right)\norm{\Psi(\eta+h)-\Psi(\eta)}_{F_\mu}+\Gamma(A_1(\eta,h),B_1(\eta,h)),$$
where
\begin{align*}
	&A_1(\eta,h) =\sup_{t\leq\tau}\sup_{\norm{w}\leq 1}\Big(e^{(\mu-\rho)(t-\tau)}\cdot\\
	& \qquad \qquad  \qquad  \cdot \int_{t}^{\tau}e^{\rho(s-\tau)}|Q(s)[Df(s,\psi(\eta+h)(s))-Df(s,\psi(\eta)(s))]\Psi(\eta)(s)w|_{N(s)}ds\Big),\\
	& B_1(\eta,h) =\sup_{t\leq\tau}\sup_{\norm{w}\leq 1}\Big(e^{(\gamma-\mu)(\tau-t)}\cdot\\
	& \qquad \qquad  \qquad  \cdot\int_{-\infty}^{t}e^{\gamma(s-\tau)}|(I-Q(s))[Df(s,\psi(\eta+h)(s))-Df(s,\psi(\eta)(s))]\Psi(\eta)(s)w|_{S(s)}ds\Big).\end{align*}
Since $\Gamma\left(\frac{L_1}{\mu-\rho},\frac{L_2}{\gamma-\mu}\right)<1$, we will prove the claim if we show that
$${A_1(\eta,h)\to 0\ \text{ and }\ B_1(\eta,h)\to 0\ \text{ as }\ h\to 0.}$$
To this end, first note that $A_1(\eta,h)$ and $B_1(\eta,h)$ are well defined. Indeed, by \eqref{e:L1DF}, \eqref{e:L2DF} we have
for $t\leq\tau$ and $\norm{w}\leq 1$
\begin{align*}
&	e^{(\mu-\rho)(t-\tau)}\int_{t}^{\tau}e^{\rho(s-\tau)}|Q(s)[Df(s,\psi(\eta+h)(s))-Df(s,\psi(\eta)(s))]\Psi(\eta)(s)w|_{N(s)}ds\\
	& \ \ \ \ \leq2L_1\norm{\Psi(\eta)}_{F_\sigma}e^{(\mu-\rho)(t-\tau)}\int_{t}^{\tau}e^{(\sigma-\rho)(\tau-s)}ds\leq\frac{2L_1}{\sigma-\rho}\norm{\Psi(\eta)}_{F_\sigma}e^{(\mu-\sigma)(t-\tau)},\\
& e^{(\gamma-\mu)(\tau-t)}\int_{-\infty}^{t}e^{\gamma(s-\tau)}|(I-Q(s))[Df(s,\psi(\eta+h)(s))-Df(s,\psi(\eta)(s))]\Psi(\eta)(s)w|_{S(s)}ds\\
& \ \ \ \ \leq 2L_2\norm{\Psi(\eta)}_{F_\sigma}e^{(\gamma-\mu)(\tau-t)}\int_{-\infty}^{t}e^{(\gamma-\sigma)(s-\tau)}ds=\frac{2L_2}{\gamma-\sigma}\norm{\Psi(\eta)}_{F_\sigma}e^{(\mu-\sigma)(t-\tau)}.
\end{align*}
Thus we have
$$A_1(\eta,h)\leq\frac{2L_1}{\sigma-\rho}\norm{\Psi(\eta)}_{F_\sigma}\ \text{ and }\ B_1(\eta,h)\leq\frac{2L_2}{\gamma-\sigma}\norm{\Psi(\eta)}_{F_\sigma}.$$
Suppose, contrary to the claim, that $A_1(\eta,h)\not\to 0$ as $h\to 0$. Thus there exist $\epsilon_0>0$ and $h_n\to 0$ such that $A_1(\eta,h_n)>\epsilon_0$ for all $n\in\N$. Therefore, there exist $t_n\leq\tau$ and $\norm{w_n}\leq 1$ such that 
$$\epsilon_0<e^{(\mu-\rho)(t_n-\tau)}\int_{t_n}^{\tau}e^{\rho(s-\tau)}|Q(s)[Df(s,\psi(\eta+h_n)(s))-Df(s,\psi(\eta)(s))]\Psi(\eta)(s)w_n|_{N(s)}ds.$$
Since the right-hand side is bounded from above by $\frac{2L_1}{\sigma-\rho}\norm{\Psi(\eta)}_{F_\sigma}e^{(\mu-\sigma)(t_n-\tau)}$ and $\mu>\sigma$,
the sequence $t_n$ cannot have a subsequence tending to $-\infty$. Hence there exists $\tau_0$ such that $t_n\geq\tau_0$ for all $n\in\N$.
We obtain
\begin{equation}\label{e:Q100}
\epsilon_0<\int_{\tau_0}^{\tau}e^{\mu(s-\tau)}|Q(s)[Df(s,\psi(\eta+h_n)(s))-Df(s,\psi(\eta)(s))]\Psi(\eta)(s)w_n|_{N(s)}ds.
\end{equation}
For each  $s\in[\tau_0,\tau]$ we have 
$$e^{\mu(s-\tau)}|Q(s)[Df(s,\psi(\eta+h_n)(s))-Df(s,\psi(\eta)(s))]\Psi(\eta)(s)w_n|_{N(s)}\leq 2L_1\norm{\Psi(\eta)}_{F_\mu},$$
which is integrable on $[\tau_0,\tau]$.

Now, recalling \eqref{e:QI-QONX} 
and deducing from Lemma~\ref{lem:EQUIVNORMS} that for $G\in\mathcal{L}(X,X)$
$$\norm{Gx}\leq c_\Gamma\norm{G}_{\mathcal{L}(X,X)}\norm{x}_{s},\ s\in\R,\ x\in X,$$
we further have
\begin{align*}
&e^{\mu(s-\tau)}|Q(s)[Df(s,\psi(\eta+h_n)(s))-Df(s,\psi(\eta)(s))]\Psi(\eta)(s)w_n|_{N(s)}\\
	&\qquad \qquad  \leq Mc_\Gamma\norm{Df(s,\psi(\eta+h_n)(s))-Df(s,\psi(\eta)(s))}_{\mathcal{L}(X,X)}\norm{\Psi(\eta)}_{F_\mu}.
\end{align*}
By \eqref{e:CONTINUITYPSIT} and \eqref{e:FRECHET3} this shows that the integrand on the right-hand side of \eqref{e:Q100} tends to zero as $n\to\infty$ pointwise on $[\tau_0,\tau]$. 
Thus the Lebesgue dominated convergence theorem shows that the right-hand side of \eqref{e:Q100} converges to zero as $n\to\infty$. This is a contradiction with the choice of $\epsilon_0$.

In a similar way, suppose now, contrary to the claim, that $B_1(\eta,h)\not\to 0$ as $h\to 0$. Thus there exist $\epsilon_0>0$ and $h_n\to 0$ such that $B_1(\eta,h_n)>\epsilon_0$ for all $n\in\N$. Therefore, there exist $t_n\leq\tau$ and $\norm{w_n}\leq 1$ such that 
$$\epsilon_0<e^{(\gamma-\mu)(\tau-t_n)}\int_{-\infty}^{t_n}e^{\gamma(s-\tau)}|(I-Q(s))[Df(s,\psi(\eta+h_n)(s))-Df(s,\psi(\eta)(s))]\Psi(\eta)(s)w_n|_{S(s)}ds.$$
Since $(\gamma-\mu)(\tau-t_n)+\gamma(s-\tau)=\mu(s-\tau)+(\gamma-\mu)(s-t_n)$, we get
\begin{equation}\label{e:Q101}
\epsilon_0<\int_{-\infty}^{\tau}e^{\mu(s-\tau)}|(I-Q(s))[Df(s,\psi(\eta+h_n)(s))-Df(s,\psi(\eta)(s))]\Psi(\eta)(s)w_n|_{S(s)}ds.
\end{equation}
Now, for every $s\in(-\infty,\tau]$ we have 
$$e^{\mu(s-\tau)}|(I-Q(s))[Df(s,\psi(\eta+h_n)(s))-Df(s,\psi(\eta)(s))]\Psi(\eta)(s)w_n|_{S(s)}\leq 2L_2\norm{\Psi(\eta)}_{F_\sigma}e^{(\mu-\sigma)(s-\tau)},$$
which is an integrable function on $(-\infty,\tau]$, since $\mu>\sigma$. Moreover, we get 
\begin{align*}
	&e^{\mu(s-\tau)}|(I-Q(s))[Df(s,\psi(\eta+h_n)(s))-Df(s,\psi(\eta)(s))]\Psi(\eta)(s)w_n|_{S(s)}\\
	& \ \ \leq Mc_\Gamma\norm{Df(s,\psi(\eta+h_n)(s))-Df(s,\psi(\eta)(s))}_{\mathcal{L}(X,X)}\norm{\Psi(\eta)}_{F_\mu}.
\end{align*}
By \eqref{e:CONTINUITYPSIT} and \eqref{e:FRECHET3} this shows that the integrand on the right-hand side of \eqref{e:Q101} tends to zero as $n\to\infty$ pointwise on $(-\infty,\tau]$. 
Thus the Lebesgue dominated convergence theorem shows that the right-hand side of \eqref{e:Q101} tends to zero as $n\to\infty$. This is a contradiction with the choice of $\epsilon_0$.
\end{proof}

\begin{proof}[Proof of Theorem~\ref{thm:MAIN3}]
Since in the proof of Theorem~\ref{thm:MAIN2} we have shown \eqref{e:DERIVFORMULA}, i.e.,
$$D_\eta\Sigma(\tau,\eta)=\Psi(\eta)(\tau)-Q(\tau),\ \tau\in\R,\ \eta\in X,$$
the continuity of $D_\eta\Sigma(\tau,\cdot)$ in $\mathcal{L}(X,X)$ follows directly from \eqref{e:CONTINFSIGMA}.
\end{proof}

\end{document}